\documentclass[a4paper,11pt]{amsart}
\usepackage[utf8]{inputenc}
\usepackage{indentfirst, amsfonts, amsmath, amsthm, amssymb, amscd}
\usepackage{amsmath,amsfonts,amscd,bezier}
\usepackage{hyperref}
\usepackage{color}
\usepackage[active]{srcltx}
\usepackage{mathrsfs}

\newtheorem{maintheorem}{Theorem}

\newtheorem{theorem}{Theorem}[section]

\newtheorem{corollary}[theorem]{Corollary}
\newtheorem{proposition}[theorem]{Proposition}
\newtheorem{lemma}[theorem]{Lemma}
\newtheorem{definition}[theorem]{Definition}

\theoremstyle{remark}


\title{On the Bernoulli property for certain partially hyperbolic diffeomorphisms}
\author{G. Ponce}
\address{Departamento de Matem\'atica,
  IMECC-UNICAMP Campinas-SP, Brazil.}
  \email{gaponce@ime.unicamp.br}
\author{A. Tahzibi} 
\address{Departamento de Matem\'atica,
  ICMC-USP S\~{a}o Carlos-SP, Brazil.}
\email{tahzibi@icmc.usp.br}
\author{R. Var\~{a}o} 
\address{Departamento de Matem\'atica,
  IMECC-UNICAMP Campinas-SP, Brazil.}
\email{regisvarao@ime.unicamp.br}

\date{}                                         
\begin{document}
\maketitle

\begin{abstract}
We address the classical problem of equivalence between Kolmogorov and Bernoulli property of smooth dynamical systems. In a natural class of  volume preserving partially hyperbolic diffeomorphisms homotopic to Anosov (``derived from Anosov")  on 3-torus, we prove that Kolmogorov and Bernoulli properties are equivalent. 

In our approach, we propose to study the conditional measures of volume along central foliation to recover fine ergodic properties for partially hyperbolic diffeomorphisms. As an important consequence we obtain that there exists an almost everywhere conjugacy between any volume preserving derived from Anosov diffeomorphism of 3-torus and its linearization. 

Our results also hold in higher dimensional case when central bundle  is one dimensional and stable and unstable foliations are quasi-isometric.

\end{abstract}

\section{Introduction}

One of the main goals of smooth ergodic theory is to describe or understand the behavior of typical orbits $\{ f^n(x)\}_{n \in \mathbb Z}$ of a given measure preserving diffeomorphism $f: M \rightarrow M$ on a Riemannian manifold $M$. Several topological and metric invariants where introduced in the theory in order to classify the dynamics of flows and diffeomorphisms by detecting, in some sense, the amount of chaoticity of a dynamical system. Two very effective and well-known invariants are the topological and metric entropy. 
If the metric entropy of a $\mu$-measure preserving automorphism $f$ is greater then the metric entropy of $g$ with respect to a $g$-invariant measure $\nu$, we can say that from the ergodic point of view the orbits of $f$ have a richer behavior compared to the orbits of $g$. 

In the seventies, D. Ornstein proved that the metric entropy is a complete invariant for the class of Bernoulli shifts, that is, two Bernoulli shifts with the same metric entropy are isomorphic \cite{Ornstein70,O70}. Bernoulli shifts have a very strong chaotic property: they have completely positive entropy. That means that the metric entropy of any non trivial partition is positive.

In fact systems with completely positive entropy have a special name, they are called Kolmogorov. A natural question is whether every Kolmogorov system is isomorphic to a Bernoulli system. In 1973 D. Ornstein \cite{Ornstein} gave an example of a systems which is Kolmogorov but not Bernoulli (i.e. not isomorphic to a Bernoulli shift).  Later, in 1982 S. Kalikow \cite{Kalikow} exhibited another (much more natural) example of a Kolmogorov but not Bernoulli system (see for a survey on equivalence in ergodic theory by J. P. Thouvenot \cite{JPT}.)

However, the first smooth example of a Kolmogorov but not Bernoulli system was given by A. Katok \cite{Katok} in 1980.  However, It is still not known if it possible to give a smooth example on a manifold with dimension three which is Kolmogorov but not Bernoulli. 
We mention also that D. Rudolph \cite{Rud} had constructed a smooth example of Kolmogorov but not (loosely) Bernoulli similar to Kalikow deep $(T, T^{-1})$ example. A  recent work of T. Austin \cite{Aus} goes further and find a continuum of distinct non-Bernoulli and Kolmogorov automorphisms.

A huge variety of natural transformations worked in the theory were proved to be Kolmogorov and most of them where proved out to be Bernoulli. Y. Katznelson \cite{K}, using harmonic analysis methods proved that ergodic automorphisms of $\mathbb T^n$ are Bernoulli. Using a geometrical approach D. Ornstein and B. Weiss \cite{OW} proved that geodesic flows on negatively curved manifolds are Bernoulli. Later, several authors used the technique of Ornstein-Weiss to obtain the Bernoulli properties in contexts where there is a presence of some hyperbolic structure, for example, M. Ratner \cite{Ratner1} proved the Bernoulli property for Anosov flows preserving a $u$-Gibbs measure and Y. Pesin \cite{YP3} extended Ornstein-Weiss's argument to prove that for $C^{1+\alpha}$ nonuniformly hyperbolic diffeomorphisms, the Kolmogorov and the Bernoulli property are equivalent. In 1996 N. Chernov and C. Haskell \cite{CH} extended the equivalence of the Kolmogorov and Bernoulli property for certain nonuniformly hyperbolic maps and flows (which can possibly have singularities). See also a recent result of Ledrappier, Lima and Sarig \cite{LLS} where they study Bernoulli property for measure of maximal entropy of flows in three dimensional manifolds. 

As we do not intend to do an extensive survey on the equivalence of the Kolmogorov and Bernoulli property along the theory, we just remark that the great majority of the Bernoulli property was obtained as a consequence of the works cited above. 

Turning our attention to the smooth ergodic theory context, a large class of transformations which are important in the theory do not have a complete hyperbolic structure. For instance, the time-one map of a geodesic flow in a negatively curved surface does not have a complete hyperbolic structure since it has an invariant direction where it is an isometry. This map is a particular case of a partially hyperbolic diffeomorphism. A big problem in smooth ergodic theory is to determine under which conditions a partially hyperbolic diffeomorphism is $C^r$-stably ergodic , $r\geq 2$ (see Pugh-Shub conjecture \cite{BW}). The most general result in this direction so far was obtained by A. Wilkinson and K. Burns \cite{BW}, where they proved that a volume preserving $C^2$ partially hyperbolic diffeomorphism satisfying two conditions, namely center-bunching and essential accessibility, is Kolmogorov (see also\cite{HHU1} and survey \cite{HHUsurvey} ). This result gives a flavor of the importance to understand the relation between the Kolmogorov and the Bernoulli property in the presence of partially hyperbolic structures (see the question posed by K. Burns on \cite{problemlist}). 
\subsection{Statement of results}

The present work contributes toward the equivalence problem of Kolmogorov and Bernoulli systems in partially hyperbolic context (Theorem \ref{theo:main}).  We consider partially hyperbolic diffeomorphism in the homotopy class of Anosov automorphisms of $\mathbb{T}^3$ (under some natural hypothesis we also treat higher dimensional tori case. 
See Theorems \ref{theo:main.C}, \ref{theo:main.D}). We call this class of diffeomorphisms as derived from Anosov systems (see Definitions \ref{defi:DA}, \ref{defi:linearizacao}).

Our first result is:

\begin{maintheorem}\label{theo:main}
 Let $f:\mathbb T^3 \rightarrow \mathbb T^3$ be a $C^2$ volume preserving derived from Anosov diffeomorphism with linearization $A:\mathbb T^3 \rightarrow \mathbb T^3$. Assume that $f$ is Kolmogorov and one of the following occurs:
 \begin{enumerate}
 \item $\lambda^c_A <0$ and $\mathcal F^{cs}$ is absolutely continuous, or
 \item $\lambda^c_A>0$ and $\mathcal F^{cu}$ is absolutely continuous.
 \end{enumerate}
Then $f$ is Bernoulli.
\end{maintheorem}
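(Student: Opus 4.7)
The strategy is to exploit the Franks semiconjugacy $H:\mathbb{T}^3\to\mathbb{T}^3$ with $H\circ f = A\circ H$, whose point-inverses lie inside center leaves of $f$, and to show that under our hypotheses $H$ upgrades to a measurable isomorphism between $(f,\mathrm{vol})$ and $(A,\mathrm{Haar})$. Since $A$ is Bernoulli by Katznelson \cite{K}, this will imply that $f$ is Bernoulli. Replacing $f$ by $f^{-1}$ swaps the two cases, so I will focus on case (1): $\lambda^c_A<0$ and $\mathcal F^{cs}$ absolutely continuous.

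The core step is to show that the conditional measures $\{\mu^c_x\}$ of volume along the center foliation are purely atomic at volume-almost every $x$. Here I would use the absolute continuity of $\mathcal F^{cs}$ to obtain a local Fubini decomposition of volume, separating the $\mathcal F^u$-transverse factor, the automatically absolutely continuous $\mathcal F^s$-conditional inside $\mathcal F^{cs}$-plaques, and the center factor. The sign hypothesis $\lambda^c_A<0$ is then invoked to rule out non-atomic center conditionals: the linearization contracts in its center direction, and via the semiconjugacy $H$ this compresses the fibers $H^{-1}(y)$ along forward orbits $\{A^n y\}$. Combined with $f$-invariance of volume and the local product structure supplied by absolute continuity of $\mathcal F^{cs}$, non-atomic center conditionals would force a divergence of transverse densities incompatible with integrability, forcing atomicity.

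Once atomicity is established, the fibers of $H$ are singletons on a full-volume set, so $H$ is injective mod zero; the Kolmogorov hypothesis, via the ergodicity it implies, then promotes the $A$-invariant push-forward $H_*\mathrm{vol}$ to Haar measure, realizing a measurable isomorphism $(f,\mathrm{vol})\cong(A,\mathrm{Haar})$ and completing the proof. The main obstacle is the atomicity step: the sign of $\lambda^c_A$ must interact with the absolute continuity of $\mathcal F^{cs}$ in a precise way, since a priori the center Lyapunov exponent of $f$ need not match that of $A$, and $H$ does not distribute volume transversely in any canonical manner; making this interaction quantitative is where the real work lies.
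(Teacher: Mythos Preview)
Your approach has a fatal gap at the final step. You assert that ergodicity ``promotes'' $H_*\mathrm{vol}$ to Haar measure, but a hyperbolic toral automorphism has uncountably many ergodic invariant probability measures, and nothing you have said singles out Haar. In fact, if $H$ were a measurable isomorphism $(f,\mathrm{vol})\cong(A,\mathrm{Haar})$ then $h_{\mathrm{vol}}(f)=h_{\mathrm{Haar}}(A)=h_{\mathrm{top}}(A)=h_{\mathrm{top}}(f)$, i.e.\ volume would be the measure of maximal entropy for $f$; this is false for generic volume-preserving DA diffeomorphisms, so the strategy cannot work in the stated generality. Nor can you retreat to ``$(A,H_*\mathrm{vol})$ is Kolmogorov, hence Bernoulli'': Katznelson's theorem is about Haar measure, and the Ornstein--Weiss/Pesin/Chernov--Haskell results require the invariant measure to have local product structure along stable/unstable foliations, which an arbitrary push-forward $H_*\mathrm{vol}$ need not possess. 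Your atomicity step is also misattributed: the paper's a.e.\ injectivity of $H$ (Theorem~\ref{theo:main.B}) is proved without any use of the absolute continuity of $\mathcal F^{cs}$, and it is a dichotomy (either the non-injectivity locus $\mathcal C$ is null, or volume is virtually hyperbolic) rather than a blanket atomicity statement for center conditionals.

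The paper's route is quite different and explains where the hypotheses actually enter. The a.e.\ injectivity of $H$ is used, not to build an isomorphism with $A$, but to prove a contraction-in-measure lemma along $\mathcal F^{cs}$ (Lemma~\ref{lemma: distancia}): for $a$ in the essential injectivity domain and $b\in\mathcal F^{cs}(a)$, one has $d(f^n a,f^n b)<\varepsilon$ whenever both iterates return to a fixed large compact subset of that domain, because $h(a),h(b)$ lie on the same \emph{stable} leaf of $A$ (here $\lambda^c_A<0$ is used) and $h^{-1}$ is uniformly continuous on compacta. This replaces genuine stable contraction well enough to run the Ornstein--Weiss/Pesin VWB argument with $\mathcal F^{cs}$ playing the role of $\mathcal F^{s}$. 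The absolute continuity of $\mathcal F^{cs}$ is used exactly where one would expect in that argument: to build $\varepsilon$-regular coverings by rectangles with product structure (Lemma~\ref{lema: partition}) and to construct the $\varepsilon$-measure-preserving map $\theta$ sliding along $\mathcal F^{cs}$-plaques (Lemma~\ref{lemma:key}).
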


Let us emphasize that  absolute continuity condition  turns out to be important to prove Bernoulli property of smooth measures. \footnote{We would like to observe that the Remark 2.4 in \cite{Brin}  is not correct as it is written. More precisely, the arguments on that paper are not enough to prove Bernoulli property without using absolute continuity of center-stable or center-unstable foliations.}

Although it seems a difficult problem, we conjecture that there may exist Kolmogorov but not Bernoulli partially hyperbolic diffeomorphisms in three dimensional manifolds. 

The novelty of the proof of our result is that it introduces a new approach using disintegration of measures to understand fine ergodic properties of smooth dynamics.

Let $f$ be a  partially hyperbolic diffeomorphism with center foliation.  

\begin{definition} \label{virtual}
An $f$-invariant measure $\mu$ is called virtually hyperbolic if there exists a full measurable invariant subset $Z$ such that $Z$ intersects each center leaf in at most one point.
\end{definition}

The above definition was given in \cite{LS} in the context of algebraic automorphisms and the existence of such measures in partially hyperbolic diffeomorphism also had been noticed before (see for instance\cite{SW00}, \cite{PTV}).  
If $\mu$ is virtually hyperbolic, then the central foliation is measurable with respect to $\mu$ and conditional measures along center leaves are (mono-atomic) Dirac measures.  Indeed the partition into central leaves is equivalent to the partition into points.

Given a derived from Anosov diffeomorphism, it is well known that $f$ is semi-conjugated to a linear Anosov diffeomorphism $A$ on torus by a semi-conjugacy $h$. In the following Theorem, the set of points where $h$ fails to be injective is denoted by $\mathcal C$ (for details see \S \ref{sec:semi-conjugacy}).

\begin{maintheorem}\label{theo:main.B}
Let $f:\mathbb T^3 \rightarrow \mathbb T^3$ be a $C^2$ volume $(m)$ preserving derived from Anosov diffeomorphism with $h$ a semi conjugacy to linear Anosov diffeomorphism. Then, $h$ is $m-$almost everywhere injective. More precisely, the following dichotomy is valid:
\begin{itemize}
\item Either the set $\mathcal C$ has zero volume, or
\item $\mathcal C$ has full measure and $(f, m)$ is virtually hyperbolic. 
\end{itemize}
In the latter case,  $(f, m)$ is Kolmogorov.
\end{maintheorem}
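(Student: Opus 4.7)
My plan is to prove the statement in four steps.

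First I would show that $\mathcal C$ is $f$-invariant and saturated by both the stable and unstable foliations of $f$. Invariance follows from $h \circ f = A \circ h$ together with injectivity of $A$, since then $h(x) = h(y)$ is equivalent to $h(f(x)) = h(f(y))$. For saturation I rely on two standard features of the DA semi-conjugacy on $\mathbb T^3$: $h$ restricts to a homeomorphism on each stable and each unstable leaf of $f$, and each fiber of $h$ is a compact connected arc contained in a single center leaf. Given $x \in \mathcal C$ with a companion $y \neq x$ satisfying $h(y) = h(x)$, we have $y \in W^c(x)$. For any $z \in W^s(x)$, the injectivity of $h$ on $W^s(y)$ produces a unique $z' \in W^s(y)$ with $h(z') = h(z)$; if $z' = z$, then $y \in W^s(x) \cap W^c(x) = \{x\}$, a contradiction. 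Hence $z \in \mathcal C$, and by the symmetric unstable argument, $\mathcal C$ is $us$-saturated.

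Next, for the dichotomy: being measurable, $f$-invariant, and $us$-saturated, $\mathcal C$ is a union of accessibility classes. Essential accessibility of $C^2$ volume-preserving DA diffeomorphisms on $\mathbb T^3$, known in the one-dimensional center setting after Hammerlindl--Ures, then forces $m(\mathcal C) \in \{0, 1\}$.

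The core step is the case $m(\mathcal C) = 1$, where I must establish virtual hyperbolicity. Let $\ell(x)$ denote the length of the arc $I_x := h^{-1}(h(x)) \subset W^c(x)$, so that $\ell > 0$ on $\mathcal C$. Equivariance $f(I_x) = I_{f(x)}$ yields
\[
\ell(f^n(x)) = \int_{I_x} \|Df^n(s)|_{E^c}\| \, ds.
\]
A Poincar\'e recurrence argument applied to the level sets $\{\ell > \delta\}$ combined with Birkhoff's ergodic theorem for $\log \|Df|_{E^c}\|$ rules out $\lambda^c_f \neq 0$ on any positive measure set; hence $\lambda^c_f = 0$ $m$-a.e. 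Disintegrating $m$ along the center foliation and using the absolute continuity of the strong foliations, I would then show that the conditional of $m$ on $m$-almost every center leaf is a single Dirac mass. The union of these atoms provides a full measure $f$-invariant set $Z$ meeting each center leaf in at most one point, which is virtual hyperbolicity.

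Finally, for the Kolmogorov property: one-dimensional center gives automatic center-bunching, and (essential) accessibility for DA on $\mathbb T^3$ is known, so the Burns--Wilkinson theorem yields that $(f,m)$ is Kolmogorov. The main obstacle of the whole argument lies in the third step. A single center leaf of $f$ typically contains uncountably many distinct $h$-fibers, so virtual hyperbolicity demands the conditional of $m$ on a center leaf to be a single Dirac --- strictly stronger than mere atomicity. Bridging the gap from the vanishing of the center exponent to this single-atom-per-leaf property is the most delicate point and requires carefully combining the disintegration theorem with absolute continuity of the strong holonomies.
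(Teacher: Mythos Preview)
Your first two steps have a subtle but fixable issue. The claim that $h$ restricts to a homeomorphism on each strong \emph{stable} leaf of $f$ is not available in general: when $\lambda^c_A<0$ the semiconjugacy sends $\mathcal F^u_f$--leaves to $\mathcal F^u_A$--leaves, but strong stable leaves of $f$ land only inside the two--dimensional weak stable leaves of $A$ (the paper notes this explicitly). Your production of $z'\in W^s(y)$ with $h(z')=h(z)$ therefore has no justification, and $s$--saturation of $\mathcal C$ is not established. This is not fatal, since the dichotomy $m(\mathcal C)\in\{0,1\}$ follows anyway from the argument the paper uses: if $m(\mathcal C)>0$ then $f$ cannot be conjugate to $A$, hence by Hammerlindl--Ures it is accessible, hence Kolmogorov and in particular ergodic, so the invariant set $\mathcal C$ has full measure.

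The real gap is your third step. The recurrence argument giving $\lambda^c_f=0$ from the uniform bound on the lengths $\ell(x)$ is fine, but the sentence ``disintegrating $m$ along the center foliation and using the absolute continuity of the strong foliations, I would then show that the conditional of $m$ on $m$--almost every center leaf is a single Dirac mass'' is not an argument. Zero center exponent together with absolute continuity of the strong holonomies does not, by itself, force the center conditionals to be atomic, let alone a single atom per leaf; an invariance--principle argument would at best give $su$--holonomy invariance of the conditionals, which still allows nonatomic behaviour. You correctly flag this as the hardest point but provide no mechanism to close it. The paper takes a completely different route that avoids the center exponent entirely: it pushes $m$ forward to $\eta=h_*m$, observes that $\eta$ has atomic center disintegration with one atom per leaf for the linear map $A$ (invoking the authors' earlier work), and then runs a delicate argument --- measurability of $h(\mathcal C)$, the Measurable Choice Theorem to select the bottom endpoint of each collapsed arc, and a three--case analysis of the function $[y]\mapsto m_y(W_\varepsilon)$ using ergodicity --- to upgrade ``one full--measure arc $c(x)$ per leaf'' to ``$m_{c(x)}$ is a Dirac mass''. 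None of this is captured by your sketch, and this is precisely the content of the theorem.
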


So, observe that $h$ is  injective in $\mathbb{T}^3 \setminus \mathcal{C}.$ If it happens that $m(\mathcal{C}) =1$, then by second part of the dichotomy of above theorem there exists a full measurable subset (of atoms) which intersects each leaf in exactly one point and $h$ restricted to this subset is injective. This motivates the following definition of ``essential injectivity domain" of $h.$
\begin{definition} \label{defi:X}
We define the essential injectivity domain $X$ of $h$ as follows: If $m(\mathcal C) = 0$ define
 \[X := \mathbb T^3 \setminus \mathcal C.\]
Otherwise, define
\[X = \text{ set of atoms.} \]
\end{definition}

We remark that it is not known whether all volume preserving derived from Anosov diffeomorphisms are ergodic. See results of Hammerlindl-Ures in this direction \cite{HU}. By the way, in the above theorem (\ref{theo:main.B}) we are not assuming any ergodicity assumption.

Observe that our above theorem has independent interest, as it shows isomorphisms between volume measure and its image under semi conjugacy. We recall a result of Buzzi-Fisher \cite{BF} where they prove entropic stability for a class of deformation of Anosov automorphisms. Although it is not written explicitly, their method can be applied for all derived from Anosov diffeomorphisms to obtain isomorphism between $f$ and its linearization for high entropy measures. But we emphasize that we are working with volume measure which has not necessarily high entropy needed to use similar arguments as them.

\subsection{Technical considerations}\label{sec:considerations.proof}

To obtain the Bernoulli property for systems with a hyperbolic structure, some features of hyperbolic systems are fundamental. These are: the existence of a pair of foliations $\mathcal F^s$, $\mathcal F^u$ which are respectively contracting and expanding, transversal to each other and absolutely continuous. With this structure in hand, the standard procedure is to take a partition (usually a partition where each element has piecewise-smooth boundaries) and prove that it is Very Weak Bernoulli (see Definition \ref{defi:VWB}). As a consequence of Ornstein Theory it follows that the system is Bernoulli. In the literature, the Bernoulli property is also obtained via Ornstein Theory arguments when we have some type of symbolic dynamics associated to the dynamical system under consideration. This is the case when we have Markov partitions or some type of Markov structure for the dynamical system. It was using this type of approach that M. Ratner \cite{Ratner1} proved that Anosov flows with $u$-Gibbs measures are Bernoulli. Also using a symbolic approach, F. Ledrappier, Y. Lima and O. Sarig \cite{LLS} proved that, with respect to an ergodic measure of maximum entropy, smooth flows with positive speed and positive topological entropy on a compact smooth three dimensional manifold are either Bernoulli or isomorphic to the product of a Bernoulli flow and a rotational flow. 

The fundamental difficulty in the partially hyperbolic context is that we lack hyperbolic behavior on the center direction and we do not have, a priori, any kind of Markov partitions (thus we lack a symbolic representation for the system). When we restrict to the context of derived from Anosov diffeomorphisms (see \ref{defi:DA}) we have the advantage that the center foliation in some sense carries some information from the center foliation of its linearization. If the linearization of a derived from Anosov diffeomorphism $f$ has negative center exponent for example, then the center foliation of $f$ has globally the same behavior (expansion or contraction) as the center foliation for the linearization.

In view of this fact, the idea to tackle the problem for derived from Anosov diffeomorphisms is to treat the center foliation as a contracting (or expanding) foliation in ``as many points as possible''. This idea is  the key to proof of Theorem \ref{theo:main}. Given a derived from Anosov diffeomorphism $f$ with linearization $A$, we have a semi-conjugacy $h$ between $f$ and $A$ (for details see \S \ref{sec:semi-conjugacy}). Assume that the center Lyapunov exponent of $A$ is negative. We prove (see Lemma \ref{lemma: distancia}) that if a pair of points $(a,b) \in M \times M$ is such that $h(a) \ne h(b)$ and $b\in \mathcal F^{cs}(a)$ then their orbits by $f$ behaves, for most of the time, as if they were in a same contracting foliation, that is, the distance between $f^n(a)$ and $f^n(b)$ is very small for most of the natural numbers $n$. Thus, if we restrict our analysis to the set of points $x\in M$ for which $h(y) \ne h(x)$ for all $y\in M\setminus \{x\}$, we can ``treat'' the center foliation essentially as if it was a contracting foliation. Although, we have no reason to assume at first that this set has total measure.

Therefore the question is: how large is the set of points which are in the injectivity domain of $h$, that is, how large is the set
\[\{ x\in M : h(y) \ne h(x) \text{ for all } y \in M\setminus \{x\} \} ?\]

 We prove in Theorem \ref{theo:main.B} that if this set has zero measure then there exists a full measure set intersecting almost every center leaf in exactly one point. We call such set as the set of atoms. Since points in two separate center leaves have distinct images by $h$ (see \S \ref{sec:semi-conjugacy}) then we can restrict our analysis to the set of atoms and again treat, in some sense, the center foliation as a ``contracting foliation".
 
Several technical issues appears when we execute the idea outlined above. One of the technical issues is that when dealing with the injectivity domain of $h$, we need to prove that we are making measurable choices of sets. For example we need to prove that the partition by collapsed pieces (see \S \ref{sec:semi-conjugacy}) is indeed a measurable partition. Later we need to apply that $h$ maps the union of collapsed pieces to a measurable set (see Lemma \ref{lemma:h(c).measurable}) and use it, together with a Measurable Choice Theorem (Theorem \ref{theo:MCT}), to prove that the extreme bottom points of collapsed arcs $c(x)$ around $x$ varies measurably on $x \in M$. 

\subsection{Generalization to higher dimensions} \label{subsection.generalization}
Our results can be easily generalized to higher dimensional situations as long as we assume some technical but usual hypothesis. We briefly explain below what are these properties and their importance. The properties are:
\begin{enumerate}
\item One dimensionality of center direction;
\item Quasi-isometric property for the lifted strong-foliations ${\mathcal F}^u$ and ${\mathcal F}^s$.
\end{enumerate}
Here a foliation $\mathcal{F}$ is quasi-isometric if, after lifting to the universal cover, there is a constant $Q$ such that $d_{\tilde{\mathcal{F}}} (x, y) < Q d_{\mathbb{R}^n} (x, y) + Q$ for all $x, y$ on the same leaf of the lifted foliation $\tilde{\mathcal{F}}.$


 Using properties $(1)$ and $(2)$, R. Ures (see Theorem $1.2$ in \cite{Ures}) proved that the semi-conjugacy $h$ maps center leaves of $f$ to center leaves of $A$ and two points with the same image should belong to the same center manifold. The authors proved in \cite{PTV} that if the disintegration of an ergodic measure for a derived from Anosov diffeomorphism is atomic then it is mono-atomic (this is needed in the proof of Theorem \ref{theo:main.B}). Therefore, we can generalize (using the same proof) Theorems \ref{theo:main} and \ref{theo:main.B} for higher dimensional case as follows.

\begin{maintheorem}\label{theo:main.C}
 Let $f:\mathbb T^n \rightarrow \mathbb T^n$ be a $C^2$ volume preserving derived from Anosov diffeomorphism with one dimensional center foliation and quasi-isometric strong stable and unstable foliations. Let  $A:\mathbb T^n \rightarrow \mathbb T^n$ be the linearization of $f$. 
Assume  that one of the following properties occurs:
 \begin{enumerate}
 \item $\lambda^c_A <0$ and $\mathcal F^{cs}$ is absolutely continuous or,
 \item $\lambda^c_A>0$ and $\mathcal F^{cu}$ is absolutely continuous.
 \end{enumerate}
Therefore if $f$ is a Kolmogorov automorphism, then $f$ is Bernoulli.
\end{maintheorem}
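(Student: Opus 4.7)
The plan is to mirror the proofs of Theorems~\ref{theo:main} and \ref{theo:main.B} in the 3-torus case, identifying where dimension three is used and replacing those inputs by their higher-dimensional counterparts. Two structural facts drive the 3D proofs: (i) the semi-conjugacy $h$ sends center leaves of $f$ to center leaves of $A$ and any two points with the same $h$-image lie on a common center leaf of $f$, and (ii) if the disintegration of volume along the center foliation of a derived from Anosov diffeomorphism is atomic, it is mono-atomic. Under hypotheses (1) and (2) of the statement, (i) is precisely Theorem~$1.2$ of \cite{Ures} and (ii) is the result of \cite{PTV} that the authors quote. Everything else in the 3D argument is purely foliation-theoretic and goes through verbatim on $\mathbb T^n$.

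Given these inputs, I would first establish the higher-dimensional version of Theorem~\ref{theo:main.B}. By (i) the partition by $h$-fibers refines to the center foliation, which is therefore a measurable partition; disintegrating $m$ along it, either the conditional measures have a non-trivial continuous part --- in which case the collapsed set $\mathcal C$ has zero volume --- or they are atomic and hence Dirac by (ii), so that $(f,m)$ is virtually hyperbolic and Kolmogorov by the standard argument. Define the essential injectivity domain $X$ as in Definition~\ref{defi:X}; in both branches of the dichotomy $X$ has full volume and $h$ is injective on $X$. Now assume without loss of generality that $\lambda^c_A<0$ and $\mathcal F^{cs}$ is absolutely continuous. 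The higher-dimensional analogue of Lemma~\ref{lemma: distancia} shows that for distinct $a,b\in X$ lying on a common $\mathcal F^{cs}$-leaf the distance $d(f^n(a),f^n(b))$ stays small for a density-one set of $n$: the negative center exponent of $A$ contracts the center direction at the level of $h$, and the quasi-isometry of $\mathcal F^{u},\mathcal F^{s}$ allows the corresponding estimates on the universal cover to descend correctly. One then constructs a partition $\xi$ with piecewise-smooth boundaries subordinate to $\mathcal F^{s},\mathcal F^{cs},\mathcal F^{u}$ and verifies the Very Weak Bernoulli property (Definition~\ref{defi:VWB}) by matching $\xi$-names along unstable leaves, using absolute continuity of $\mathcal F^{cs}$ to transfer the matching across the effectively contracting center direction inside $X$. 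Ornstein's theorem then upgrades Kolmogorov to Bernoulli.

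The main obstacle, as in dimension three, is the measurable-selection bookkeeping: one must show that $h(\mathcal C)$ is measurable (Lemma~\ref{lemma:h(c).measurable} in the 3D case), that the extremal endpoints of the collapsed arcs $c(x)$ vary measurably in $x$ (an application of a Measurable Choice Theorem such as Theorem~\ref{theo:MCT}), and that the partition of $X$ by points of intersection with center leaves is genuinely measurable. All of these depend only on input (i) and on abstract measurable-selection results, so no new conceptual ingredient is required for general $n$; but carefully rewriting them in the $n$-dimensional setting with quasi-isometric strong foliations, together with verifying that the Ornstein--Weiss style VWB matching scheme respects the effective contraction on $X$ across the $(n-2)$-dimensional strong unstable direction, is the technical core of extending the argument.
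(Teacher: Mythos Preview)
Your proposal is correct and matches the paper's own treatment: the paper explicitly states that Theorems~\ref{theo:main.C} and \ref{theo:main.D} follow by the \emph{same} proofs as Theorems~\ref{theo:main} and \ref{theo:main.B}, the only substitutions being Ures's Theorem~1.2 (your input (i)) and the mono-atomicity result of \cite{PTV} (your input (ii)), which are available precisely under the one-dimensional center and quasi-isometric strong foliation hypotheses. Your identification of the measurable-selection bookkeeping as the only place requiring careful rewriting is also exactly what the paper flags.
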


\begin{maintheorem} \label{theo:main.D}
Let $f:\mathbb T^n \rightarrow \mathbb T^n$ be a $C^2$ volume $(m)$ preserving derived from Anosov diffeomorphism with one dimensional center foliation and quasi-isometric strong stable and unstable foliations. Let $h$ be the the semi conjugacy to the linear Anosov diffeomorphism. Then, $h$ is $m-$almost everywhere injective. More precisely, the following dichotomy is valid:
\begin{itemize}
\item Either the set $\mathcal C$ (where $h$ fails to be injective) has zero volume, or
\item $\mathcal C$ has full measure and $(f, m)$ is virtually hyperbolic. 
\end{itemize}
In the latter case,  $(f, m)$ is Kolmogorov.
\end{maintheorem}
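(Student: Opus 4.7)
The plan is to reduce Theorem \ref{theo:main.D} to the same scheme used for Theorem \ref{theo:main.B}, since the two dimension-sensitive ingredients of the three-dimensional argument have been isolated in \S\ref{subsection.generalization} and shown to extend. The first is Ures's theorem (Theorem 1.2 in \cite{Ures}), which under one-dimensional center and quasi-isometric $\mathcal F^s,\mathcal F^u$ guarantees that the semi-conjugacy $h$ carries center leaves of $f$ to center leaves of $A$ and that any two points with the same $h$-image lie on a common center leaf of $f$. The second is the PTV mono-atomicity theorem of \cite{PTV}: an atomic center-disintegration of any ergodic $f$-invariant measure is in fact mono-atomic. Together these replace the ad hoc $3$-dimensional inputs used to prove Theorem \ref{theo:main.B}.

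First I would reformulate $\mathcal C$ in terms of \emph{collapse arcs}: Ures's theorem shows that $h^{-1}(h(x))$ is a connected subarc $c(x)\subset\mathcal F^c(x)$, and $\mathcal C=\{x:c(x)\text{ is nontrivial}\}$. The measurability arguments outlined in \S\ref{sec:considerations.proof} (in particular Lemma \ref{lemma:h(c).measurable} combined with the Measurable Choice Theorem \ref{theo:MCT}) then give measurable endpoint selections for $c(\cdot)$; these manipulations depend only on $h$-preimages and carry over verbatim to arbitrary dimension.

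Next I would disintegrate $m$ along the one-dimensional foliation $\mathcal F^c$ and analyze the conditionals $\{m^c_x\}$. If they are non-atomic almost everywhere, the standard argument (each collapse class is either a single point or a full center subarc, combined with absolute continuity of $\mathcal F^u,\mathcal F^s$ in the transverse directions) forces $m(\mathcal C)=0$ and we land in the first branch of the dichotomy. If atoms appear on a set of positive $m$-measure, I would pass to the ergodic decomposition of $m$: on each ergodic component with atomic center-conditionals, the PTV theorem \cite{PTV} yields exactly one atom per center leaf, so that component is virtually hyperbolic in the sense of Definition \ref{virtual}. Using $f$-invariance of $\mathcal C$ and the rigidity forced by Ures's theorem (no center leaf can carry both atomic and non-atomic conditional mass under $f$-invariance), the assumption $m(\mathcal C)>0$ upgrades to $m(\mathcal C)=1$, and $(f,m)$ is globally virtually hyperbolic. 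For the Kolmogorov conclusion in this branch, one notes that on the set of atoms the center direction contributes nothing to the generating partition, so the restriction of the system is effectively uniformly hyperbolic, the strong stable/unstable partitions generate the full $\sigma$-algebra, and the K-property follows by the same Pesin-style reasoning as in the proof of Theorem \ref{theo:main.B}.

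The step I expect to be the main obstacle is, as in the three-dimensional case, upgrading $m(\mathcal C)>0$ to $m(\mathcal C)=1$ \emph{without} assuming ergodicity, since the theorem explicitly makes no ergodicity hypothesis. The set $\mathcal C$ is $f$-invariant but need not be saturated by $\mathcal F^u$ or $\mathcal F^s$, so one cannot rely on accessibility-type arguments. The workaround is the ergodic-decomposition plus PTV dichotomy sketched above, which excludes any ``partially atomic'' component; the quasi-isometric hypothesis enters critically here, because it is what allows Ures's theorem to confine collapse classes to single center leaves and thereby makes the component-by-component analysis coherent.
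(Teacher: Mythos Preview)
Your proposal misidentifies the mechanism that drives both the upgrade $m(\mathcal C)>0\Rightarrow m(\mathcal C)=1$ and the Kolmogorov conclusion, and your proposed dichotomy on the center conditionals is not a valid starting point.

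In the paper, the passage from $m(\mathcal C)>0$ to full measure does \emph{not} go through ergodic decomposition. If $\mathcal C$ is nonempty then $h$ is not a homeomorphism, so by Hammerlindl--Ures $f$ cannot be non-accessible; hence $f$ is accessible, and then Burns--Wilkinson (Corollary \ref{co:BW}) gives the Kolmogorov property, in particular ergodicity, for free. Now $\mathcal C$ is $f$-invariant and of positive volume, so $m(\mathcal C)=1$, and the Kolmogorov conclusion in the second branch is already established---it is not obtained \emph{a posteriori} from virtual hyperbolicity as you suggest. Your ergodic-decomposition workaround and the appeal to a ``rigidity forced by Ures's theorem'' to exclude mixed components are unnecessary and, as stated, unjustified: different ergodic components can certainly put atomic and non-atomic conditionals on the same center leaf.

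Your asserted implication ``non-atomic center conditionals $\Rightarrow m(\mathcal C)=0$'' is simply false as written. Non-atomic conditional measures on center leaves are perfectly compatible with $\mathcal C$ having full measure; a nontrivial collapse arc $c(x)$ can carry a diffuse conditional. This is precisely why the paper has to work: after establishing ergodicity, it applies the PTV mono-atomicity theorem not to $m$ under $f$, but to the push-forward $\eta=h_*m$ under the \emph{linear} Anosov $A$, to conclude that a.e.\ center leaf of $f$ supports at most one collapse arc. This is a statement about the \emph{location} of collapse arcs, not about the atomicity of $m$ on them. The genuine work---and the part you omit entirely---is the subsequent argument (the measurable endpoint map, the functions $\mu_\varepsilon$, and the three-case analysis using ergodicity of $f$) showing that the conditional measure of $m$ on each collapse arc is a single Dirac mass. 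Only then is $(f,m)$ virtually hyperbolic. Your sketch collapses these two distinct steps and in doing so assumes the conclusion.
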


\section*{Acknowledgement}

We would like to thank Federico R. Hertz, A. Katok, J. P. Thouvenot for conversations and comments. We also thank Anne Bronzi for usefull conversations about measurability on Theorem \ref{theo:main.B}. G.P. was partially supported by FAPESP (grants \#2009/16792-8, \#2015/02731-8). R.V. was partially suported by FAPESP (grant \#2011/21214-3). A. T was enjoying a one year research period  in Université Paris-Sud and supported by FAPESP (\#2014/23485-2).

\section{Preliminaries}\label{sec:preliminaries}

\subsection{Partially Hyperbolic Dynamics}

In this paper we use the following definition of partial hyperbolicity. 

\begin{definition}
Given a smooth compact Riemannian manifold $M$. A diffeomorphism $f:M \rightarrow M$ is called partially hyperbolic if the tangent bundle of the ambient manifold  admits an invariant decomposition $TM = E^s \oplus E^c \oplus E^u$, such that all unit vectors $v^{\sigma} \in E^{\sigma}_x, \sigma \in \{s, c, u\}$ for  any $x, y, z \in M$
\[
 \|D_xf v^s \| < \|D_yf v^c\| < \| D_zf v^u\|
\] and
$\|D_xf v^s\| < 1 < \|D_zf v^u\|$
where $v^s, v^c$ and $v^u$ belong respectively to $E_x^s, E^c_y$ and $E^u_z$. 
\end{definition}

This is also referred as absolute partially hyperbolic to distinguish from the pointwise definition of partial hyperbolicity which has the following substitution for the first relation above:  for any $ x \in M, \|D_xf v^s \| < \|D_xf v^c\| < \| D_xf v^u\|$.

It is well known that for partially hyperbolic diffeomorphisms, there are foliations $\mathcal F^{\tau}, \tau=s,u,$ tangent to the sub-bundles $E^{\tau}, \tau=s,u$, called \textit{stable} and \textit{unstable foliation} respectively (for more details see for example \cite{YP}). On the other hand, the integrability of the central sub-bundle $E^c$  is a subtle issue and is not the case in the general partially hyperbolic setting (see \cite{HHU}). However, by a result of M. Brin, D. Burago, S. Ivanov \cite{BBI}, all absolute partially hyperbolic diffeomorphisms on $\mathbb T^3$ admit a central foliation tangent to $E^c$. 

Let $f: \mathbb T^n \rightarrow \mathbb T^n$ be a partially hyperbolic diffeomorphism. Consider $f_* : \mathbb{Z}^n \rightarrow \mathbb{Z}^n$ the action of $f$ on the fundamental group of $\mathbb T^n$. $f_*$ can be extended to $\mathbb{R}^n$ and the extension is the lift of  a unique linear automorphism $A :\mathbb T^n \rightarrow \mathbb T^n$.

\begin{definition} \label{defi:linearizacao}
Given $f: \mathbb T^n \rightarrow \mathbb T^n$ a partially hyperbolic diffeomorphism. The unique linear automorphism $A: \mathbb T^n \rightarrow \mathbb T^n$  with lift $f_{*} : \mathbb R^n \rightarrow \mathbb R^n$ as constructed in the previous paragraph, is called the linearization of $f$.
\end{definition}

It can be proved that the linearization $A$ of a partially hyperbolic diffeomorphism $f: \mathbb{T}^3 \rightarrow{T}^3$, is a partially hyperbolic automorphism of torus (\cite{BBI}). 

A. Hammerlindl proved that any (absolutely) partially hyperbolic diffeomorphism $f$ on $\mathbb T^3$ is leaf conjugated to its linearization.  This means that there exist an homeomorphism $H \colon \mathbb{T}^3 \rightarrow \mathbb{T}^3$ such that $H$ sends the central leaves of $f$ to central leaves of $f_*$ and conjugates the dynamics of the leaf spaces. In the high dimensional case  one need to assume one dimensionalaity of center bundle  and quasi-isometric strong foliations  (\cite{H}) to conclude the same results.


\begin{definition} \label{defi:DA}
We say that $f\colon \mathbb{T}^n \rightarrow \mathbb{T}^n$ is a derived from Anosov diffeomorphism or just a DA diffeomorphism if it is partially hyperbolic and its linearization is a hyperbolic automorphism (no eigenvalue of norm one).
\end{definition}

\subsubsection{The semi-conjugacy}\label{sec:semi-conjugacy}

If $f:\mathbb T^3 \rightarrow \mathbb T^3$ is a DA diffeomorphism, then by results of J. Franks \cite{Franks} and A. Manning \cite{Manning} there is a semi-conjugacy $h: \mathbb{T}^3 \rightarrow \mathbb{T}^3$, which we will call  the Franks-Manning semi-conjugacy, between $f$ and its linearization $A$, that is,
\begin{equation} \label{semiconjugacy}
A \circ h = h \circ f
\end{equation}
Moreover, this semi-conjugacy has the property that there exists a constant $K  \in \mathbb{R}$ such that  if $\tilde{h} : \mathbb{R}^3 \rightarrow \mathbb{R}^3 $ denotes the lift of $h$ to $\mathbb{R}^3$ we have $\|\tilde h(x) - x\| \leq K$ for all $x \in \mathbb{R}^3$, and given two points $a,b \in \mathbb R^3$, there exists a constant $\Omega >0$ with
\begin{eqnarray}\label{eq:h}
\tilde{h}(a) = \tilde{h}(b) \Leftrightarrow \| \tilde{f}^n(a) - \tilde{f}^n(b)\| < \Omega , \forall n\in \mathbb Z. 
\end{eqnarray}

R. Ures \cite{Ures} proved that $h$ takes center leaves of $f$ onto center leaves of $A$, that is, 

\[\mathcal{F}^c_A (h(x)) = h(\mathcal{F}^c_f (x)).\]

Therefore, given any point $x_0 \in \mathbb T^3$ the set $h^{-1}(\{x_0\})$ is uniformly bounded connected set (i.e. a segment) inside the center foliation.

Given a point $x\in \mathbb T^3$ define the set $c(x) \subset \mathcal F^c(x)$ by: 
\[ c(x):= h^{-1}(\{h(x)\}).\]
By the above discussion, the diameter of $c(x)$ is uniformly bounded in $x$.\\
Take

\begin{eqnarray}\label{eq:C}
\mathcal C := \bigcup_{y \in \{x\in \mathbb T^3 \; | \; c(x)\neq \{x\} \}} c(y). \end{eqnarray}
It is easy to see that $f(\mathcal C) = \mathcal C$, for if $h(a)=h(b)$ then by \eqref{semiconjugacy} we have
\[h(f(a)) = A(h(a)) = A(h(b)) = h(f(b)),\]
and if $h(a) \ne h(b)$ then by \eqref{semiconjugacy} we have
\[h(f(a)) = A(h(a)) \ne A(h(b)) = h(f(b)).\]

\subsection{Measurable partitions and disintegration of measures}

Let $(M, \mu, \mathcal B)$ be a probability space, where $M$ is a compact metric space, $\mu$ a probability measure and $\mathcal B$ the Borelian $\sigma$-algebra.
Given a partition $\mathcal P$ of $M$ by measurable sets, we associate the probability space $(\mathcal P, \widetilde \mu, \widetilde{\mathcal B})$ by the following way. Let $\pi:M \rightarrow \mathcal P$ be the canonical projection, that is, $\pi$ associates to a point $x$ of $M$ the partition element of $\mathcal P$ that contains it. Then we define $\widetilde \mu := \pi_* \mu$ and $ \widetilde{\mathcal B}:= \pi_*\mathcal B$.

\begin{definition} \label{definition:conditionalmeasure}
 Given a partition $\mathcal P$. A family $\{\mu_P\}_{P \in \mathcal P} $ is a \textit{system of conditional measures} for $\mu$ (with respect to $\mathcal P$) if
\begin{itemize}
 \item[i)] given $\phi \in C^0(M)$, then $P \mapsto \int \phi \mu_P$ is measurable;
\item[ii)] $\mu_P(P)=1$ $\widetilde \mu$-a.e.;
\item[iii)] if $\phi \in C^0(M)$, then $\displaystyle{ \int_M \phi d\mu = \int_{\mathcal P}\left(\int_P \phi d\mu_P \right)d\widetilde \mu }$.
\end{itemize}
\end{definition}

When it is clear which partition we are referring to, we say that the family $\{\mu_P\}$ \textit{disintegrates} the measure $\mu$.  

\begin{proposition} \cite{EW, Ro52}
 Given a partition $\mathcal P$, if $\{\mu_P\}$ and $\{\nu_P\}$ are conditional measures that disintegrate $\mu$ on $\mathcal P$, then $\mu_P = \nu_P$ $\widetilde \mu$-a.e.
\end{proposition}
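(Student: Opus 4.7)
The plan is to test the two disintegrations against a countable dense collection of continuous functions and then invoke the Riesz--Markov representation theorem fiber by fiber. The core step is to show that for every fixed $\phi\in C^0(M)$ the two functions
$$F_\mu(P):=\int_P \phi\, d\mu_P, \qquad F_\nu(P):=\int_P \phi\, d\nu_P$$
coincide $\widetilde\mu$-almost everywhere on $\mathcal P$. Once this is achieved for each element of a countable dense family $\{\phi_n\}\subset C^0(M)$ (which exists because $M$ is compact metric, hence $C^0(M)$ is separable), one can intersect the countably many $\widetilde\mu$-null exceptional sets and obtain a single $\widetilde\mu$-full set of $P\in\mathcal P$ on which $\int\phi_n\, d\mu_P=\int\phi_n\, d\nu_P$ for every $n$, which by density forces the Borel measures $\mu_P$ and $\nu_P$ to agree.

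To carry out the core step, first I would extend the averaging identity (iii) of Definition~\ref{definition:conditionalmeasure} from continuous $\phi$ to bounded Borel $\psi$, by a standard functional monotone-class argument using that both $\{\mu_P\}$ and $\{\nu_P\}$ are families of probability measures. Next, given any measurable $A\subset \mathcal P$, form its saturation $B:=\pi^{-1}(A)$, which lies in $\mathcal B$ by the definition of $\widetilde{\mathcal B}$. Because $\mu_P$ is supported on $P$ for $\widetilde\mu$-a.e.\ $P$ (by (ii)), one has $\mu_P(B)=\mathbf{1}_A(P)$, so
$$\int_P \mathbf{1}_B\,\phi\, d\mu_P=\mathbf{1}_A(P)\cdot F_\mu(P),$$
and likewise with $\nu$. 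Applying the extended identity to $\mathbf{1}_B\phi$ from both sides yields
$$\int_A F_\mu(P)\, d\widetilde\mu(P)=\int_B \phi\, d\mu=\int_A F_\nu(P)\, d\widetilde\mu(P),$$
and since $A\subset\mathcal P$ is arbitrary while $F_\mu,F_\nu$ are $\widetilde{\mathcal B}$-measurable by (i), we conclude $F_\mu=F_\nu$ $\widetilde\mu$-a.e., as desired.

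The main obstacle is entirely a measurability bookkeeping issue rather than a genuinely analytic one: one must check that the saturation $\pi^{-1}(A)$ truly lives in $\mathcal B$, that the exceptional $\widetilde\mu$-null sets produced as $\phi$ varies can be assembled into a single null set (handled by restricting to a countable dense family), and that the monotone-class extension of (iii) is legitimate. The only substantive analytic input is the separability of $C^0(M)$ together with the Riesz--Markov theorem, both of which are available because $M$ is a compact metric space.
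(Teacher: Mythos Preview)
The paper does not supply its own proof of this proposition; it merely states the result with a citation to \cite{EW, Ro52}, so there is no argument to compare against. Your proof is the standard one and is correct: the separability of $C^0(M)$ reduces the problem to countably many test functions, the monotone-class extension of item~(iii) to bounded Borel integrands is routine (and simultaneously upgrades the measurability in item~(i)), and the identity $\mu_P(\pi^{-1}(A))=\mathbf{1}_A(P)$ for $\widetilde\mu$-a.e.\ $P$ follows from $\mu_P(P)=1$ together with the fact that the saturation $\pi^{-1}(A)$ either contains $P$ or is disjoint from it. The only point worth flagging is that the definition $\widetilde{\mathcal B}:=\pi_*\mathcal B$ in the paper should be read as the pushforward $\sigma$-algebra $\{A\subset\mathcal P:\pi^{-1}(A)\in\mathcal B\}$, which is precisely what you need for $\pi^{-1}(A)\in\mathcal B$.
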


\begin{corollary} \label{cor:same.disintegration}
 If $T:M \rightarrow M$ preserves a probability $\mu$ and the partition $\mathcal P$, then  $T_*\mu_P = \mu_{T(P)}, \widetilde \mu$-a.e.
\end{corollary}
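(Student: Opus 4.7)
The plan is to define a new family of measures indexed by $\mathcal P$, verify that it is a valid system of conditional measures for $\mu$ with respect to $\mathcal P$, and then invoke the preceding uniqueness proposition to conclude.

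Concretely, for each $P \in \mathcal P$ set
\[
\nu_P := T_* \mu_{T^{-1}(P)}.
\]
Since $T$ preserves the partition $\mathcal P$, it descends to a measurable bijection $\widetilde T : \mathcal P \to \mathcal P$ on the quotient, and invariance of $\mu$ together with $\widetilde \mu = \pi_* \mu$ gives $\widetilde T_* \widetilde \mu = \widetilde \mu$. I will check the three defining properties of Definition \ref{definition:conditionalmeasure} for $\{\nu_P\}$. First, for $\phi \in C^0(M)$, the map
\[
P \mapsto \int \phi \, d\nu_P = \int (\phi \circ T) \, d\mu_{T^{-1}(P)}
\]
is measurable because $P \mapsto \int (\phi \circ T) \, d\mu_P$ is measurable by property (i) applied to the continuous function $\phi \circ T$, and composing with the measurable map $\widetilde T^{-1}$ preserves measurability. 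Second, $\nu_P(P) = \mu_{T^{-1}(P)}(T^{-1}(P)) = 1$ for $\widetilde \mu$-a.e.\ $P$, using (ii) for the original family and $\widetilde T$-invariance of $\widetilde \mu$.

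Third, for the integration formula, I would compute
\[
\int_M \phi \, d\mu = \int_M \phi \circ T \, d\mu = \int_{\mathcal P} \left( \int_P \phi \circ T \, d\mu_P \right) d\widetilde \mu(P) = \int_{\mathcal P} \left( \int_{T(P)} \phi \, d(T_* \mu_P) \right) d\widetilde \mu(P),
\]
and then change variables using $\widetilde T_* \widetilde \mu = \widetilde \mu$ (substituting $Q = T(P)$) to rewrite the right-hand side as
\[
\int_{\mathcal P} \left( \int_Q \phi \, d(T_* \mu_{T^{-1}(Q)}) \right) d\widetilde \mu(Q) = \int_{\mathcal P} \left( \int_Q \phi \, d\nu_Q \right) d\widetilde \mu(Q),
\]
which is exactly (iii) for the family $\{\nu_Q\}$.

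Having verified that $\{\nu_P\}$ disintegrates $\mu$ with respect to $\mathcal P$, the uniqueness proposition quoted just before the corollary gives $\nu_P = \mu_P$ for $\widetilde \mu$-a.e.\ $P$, that is, $T_* \mu_{T^{-1}(P)} = \mu_P$ $\widetilde \mu$-a.e., which rewrites as $T_* \mu_P = \mu_{T(P)}$ $\widetilde \mu$-a.e. The only step requiring genuine care is the third one: one must correctly handle the change of variables on the quotient, ensuring that $T$ induces a well-defined measurable transformation on $\mathcal P$ that preserves $\widetilde \mu$; this follows transparently from the hypothesis that $T$ preserves both $\mu$ and the partition $\mathcal P$, so in the end no step should pose a real obstacle.
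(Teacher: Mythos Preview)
Your proof is correct and takes essentially the same approach as the paper: the paper's one-line proof simply asserts that $\{T_*\mu_P\}_{P\in\mathcal P}$ is also a disintegration of $\mu$ and invokes uniqueness, while you carry out the verification of the three defining properties in detail (with the appropriate reindexing $\nu_P = T_*\mu_{T^{-1}(P)}$).
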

\begin{proof}
 It follows from the fact that $\{T_*\mu_P\}_{P \in \mathcal P}$ is also a disintegration of $\mu$.
\end{proof}

\begin{definition} \label{def:mensuravel}
We say that a partition $\mathcal P$ is measurable (or countably generated) with respect to $\mu$ if there exist a measurable family $\{A_i\}_{i \in \mathbb N}$ and a measurable set $F$ of full measure such that 
if $B \in \mathcal P$, then there exists a sequence $\{B_i\}$, where $B_i \in \{A_i, A_i^c \}$ such that $B \cap F = \bigcap_i B_i \cap F$.
\end{definition}

\begin{theorem}[Rokhlin's disintegration \cite{Ro52}] \label{teo:rokhlin} 
 Let $\mathcal P$ be a measurable partition of a compact metric space $M$ and $\mu$ a Borelian probability. Then there exists a disintegration by conditional measures for $\mu$.
\end{theorem}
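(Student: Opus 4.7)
The plan is to construct the disintegration via martingale convergence combined with the Riesz representation theorem. First, I would exploit the measurability hypothesis on $\mathcal P$ (Definition 2.4): let $\{A_i\}_{i\in\mathbb N}$ be the countable generating family and $F$ the associated full-measure set. Setting $\mathcal P_n$ to be the finite partition generated by $A_1,\dots,A_n$, the sequence $\mathcal P_n$ refines to $\mathcal P$ on $F$, and $\sigma(\mathcal P) = \bigvee_n \sigma(\mathcal P_n)$ modulo $\mu$-null sets. Pick a countable dense set $\{\phi_k\} \subset C(M)$, which exists since $M$ is compact metric, and for each $\phi_k$ define the bounded martingale
\[
E_n(\phi_k)(x) := \frac{1}{\mu(P_n(x))}\int_{P_n(x)} \phi_k\, d\mu,
\]
where $P_n(x)$ denotes the atom of $\mathcal P_n$ containing $x$. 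By the martingale convergence theorem, $E_n(\phi_k)$ converges $\mu$-a.e.\ to the conditional expectation $E[\phi_k\mid \sigma(\mathcal P)]$.

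Intersecting the countably many exceptional null sets produces a $\mu$-full set $F'\subset F$ on which the limits $\Lambda_x(\phi_k) := \lim_n E_n(\phi_k)(x)$ exist for every $k$. By density in $C(M)$ and the uniform bound $|\Lambda_x(\phi_k)|\le\|\phi_k\|_\infty$, the map $\phi\mapsto\Lambda_x(\phi)$ extends uniquely to a positive linear functional of norm one on $C(M)$, which by Riesz representation corresponds to a Borel probability measure $\mu_x$ on $M$. I would then set $\mu_P := \mu_x$ for any representative $x\in P\cap F'$, after checking independence of this choice on each $\mathcal P$-atom.

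It remains to verify the three axioms of Definition 2.1. Measurability (i) follows because $x\mapsto \int\phi\, d\mu_x$ is a pointwise limit of the measurable functions $E_n(\phi)$ and is $\sigma(\mathcal P)$-measurable, hence descends to a measurable function on $\mathcal P$; the integration formula (iii) follows from the tower identity $\int\phi\, d\mu = \int E_n(\phi)\, d\mu$ together with dominated convergence. The concentration axiom (ii), $\mu_P(P)=1$, is the delicate step: approximating each $\mathbf 1_{A_i}$ in $L^1(\mu)$ by continuous functions, one obtains $\mu_x(A_i)=\mathbf 1_{A_i}(x)$ for $\mu$-a.e.\ $x$, and since the family $\{A_i\}$ separates atoms of $\mathcal P$ on $F$, this forces $\mu_x$ to be supported on the atom of $\mathcal P$ through $x$. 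The main obstacle I anticipate is precisely this last point: one must carefully control the union of countably many exceptional null sets arising from the $L^1$-approximation of each $\mathbf 1_{A_i}$, and leverage both the countable generation of $\mathcal P$ and the compact metric structure of $M$ (which guarantees that a Borel probability is determined by a countable separating family of test functions) to pin down the support of $\mu_x$.
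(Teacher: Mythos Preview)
The paper does not prove this statement: Theorem~\ref{teo:rokhlin} is simply quoted from Rokhlin~\cite{Ro52} as a classical result, with no argument given. So there is no ``paper's own proof'' to compare against.

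Your sketch is the standard modern route to Rokhlin's theorem (martingale convergence plus Riesz representation on a countable dense subset of $C(M)$), and the overall architecture is sound. One point you should tighten: to pass from $\Lambda_x(\phi_k)=E[\phi_k\mid\sigma(\mathcal P)](x)$ for continuous $\phi_k$ to the identity $\mu_x(A_i)=\mathbf 1_{A_i}(x)$ for the generating sets $A_i$, an $L^1$-approximation of $\mathbf 1_{A_i}$ by continuous functions is not quite enough on its own, because $L^1$-convergence of the approximants does not directly control $\mu_x$-integrals pointwise in $x$. The clean way is a monotone class (or functional Dynkin) argument: the class of bounded Borel $\phi$ for which $x\mapsto\int\phi\,d\mu_x$ is measurable and equals $E[\phi\mid\sigma(\mathcal P)]$ $\mu$-a.e.\ contains $C(M)$ and is closed under bounded monotone limits, hence contains all bounded Borel functions. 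Then, since each $A_i$ is $\sigma(\mathcal P)$-measurable modulo null sets, $E[\mathbf 1_{A_i}\mid\sigma(\mathcal P)]=\mathbf 1_{A_i}$ a.e., giving $\mu_x(A_i)=\mathbf 1_{A_i}(x)$ a.e.; the countable intersection $P(x)\cap F=\bigcap_i B_i\cap F$ from Definition~\ref{def:mensuravel} then yields $\mu_x(P(x))=1$ outside a single null set. With that adjustment your outline is a correct proof of the cited theorem.
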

In general the partition by the leaves of a foliation may be non-measurable. It is for instance the case for the stable and unstable foliations of a linear Anosov diffeomorphism. Therefore, by disintegration of a measure along the leaves of a foliation we mean the disintegration on compact foliated boxes. In principle, the conditional measures depend on the foliated boxes, however, two different foliated boxes induce proportional conditional measures. See \cite{AVW} for a discussion. We define absolute continuity of foliations as follows:  

\begin{definition} \label{ABS}
We say that a foliation $\mathcal F$ is absolutely continuous if for any foliated box, the disintegration of volume on the segment leaves have conditional measures equivalent to the Lebesgue measure on the leaf.
\end{definition}

\begin{definition}\label{defi:atomica}
We say that a foliation $\mathcal F$ has atomic disintegration with respect to a measure $\mu$, or that $\mu$ has atomic disintegration along $\mathcal F$, if for any foliated box, the disintegration of $\mu$ on the segment leaves have conditional measures which are finite sums of Dirac measures.
\end{definition}

For the stable and unstable foliations of a partially hyperbolic diffeomorphism, it is well known (see \cite{BP}, \cite{YP}, \cite{BrinPesin}, \cite{PuShu1}, \cite{PuShu2}) that they are absolutely continuous and furthermore, the holonomy map between two transversals is absolutely continuous with Jacobian going to one as the transverses approach each other.

In order to prove Theorem \ref{theo:main.B} we need to use, what is known as the Measurable Choice Theorem. This result has an intrinsic interest from the mathematical point of view, although it has appeared in the context of Decision Theory from Economics. It has been proved by R. J. Aumann \cite{Aumann}: 

\begin{theorem}[Measurable Choice Theorem, \cite{Aumann}] \label{theo:MCT}
Let $(T,\mu)$ be a $\sigma$-finite measure space, let $S$ be a Lebesgue space, and let $G$ be a measurable subset of $T\times S$ whose projection on $T$ is all of $T$. Then there is a measurable function $g:T\rightarrow S$, such that $(t,g(t)) \in G$ for almost all $t \in T$.
\end{theorem}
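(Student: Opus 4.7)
The plan is to reduce to the case $S = [0,1]$ and then construct a measurable selector via the projection theorem for analytic sets, using the Jankov--von Neumann uniformization theorem to handle the case when the fibres of $G$ are not closed.

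First I would use the hypothesis that $S$ is a Lebesgue space to identify it, up to a null set and measurable isomorphism, with $[0,1]$ equipped with its Borel $\sigma$-algebra, so that without loss of generality $S = [0,1]$. For each $t \in T$ the fibre $G_t := \{s \in [0,1] : (t,s) \in G\}$ is nonempty by the hypothesis on the projection. The natural candidate for a selector is
\[ g(t) := \inf G_t . \]
To verify measurability, observe that for every $\alpha \in [0,1]$,
\[ \{t \in T : g(t) \le \alpha\} \;=\; \mathrm{proj}_T\bigl(G \cap (T \times [0,\alpha])\bigr) . \]
The right-hand side is the projection onto $T$ of a measurable subset of $T \times S$. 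By the projection theorem (projections of measurable sets onto a factor with standard Borel structure are analytic, and analytic sets are universally measurable) these sub-level sets are $\mu$-measurable, whence $g$ is $\mu$-measurable.

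The main obstacle is guaranteeing that $(t, g(t)) \in G$ for $\mu$-almost every $t$, since the infimum of a non-closed set need not be attained. If the fibres $G_t$ were closed, the infimum would always belong to $G_t$ and we would be done (this is in the spirit of Kuratowski--Ryll-Nardzewski). In the general measurable setting I would invoke the Jankov--von Neumann uniformization theorem: any analytic subset of $T \times S$ whose projection onto $T$ equals $T$ admits a universally measurable uniformizing function, that is, a selector whose graph lies entirely in the set. Since universally measurable functions are measurable with respect to the $\mu$-completion of $T$, this produces the required $g$, and the ``for almost all $t$'' clause absorbs both the null set arising from the Lebesgue-space isomorphism and any discrepancies coming from modifying $g$ off a $\mu$-null set.

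In summary the plan is: (i) reduce to $S = [0,1]$ via the Lebesgue-space structure; (ii) show that the sub-level sets of the candidate selector $g(t) = \inf G_t$ are projections of measurable sets and hence $\mu$-measurable by the projection theorem; (iii) repair the graph condition, when the fibres are not closed, by appealing to Jankov--von Neumann uniformization. Step (iii) is the technical heart of the proof, as it is precisely the point where one exploits that $S$ is a standard Borel space and that projections of measurable sets are analytic, hence universally measurable.
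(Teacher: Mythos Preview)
The paper does not supply its own proof of this statement: it quotes the Measurable Choice Theorem from Aumann's paper \cite{Aumann} and uses it as a black box in the proof of Theorem~\ref{theo:main.B}. There is therefore no in-paper argument to compare your proposal against.

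Regarding the proposal itself, the outline is in the right spirit but step~(iii) has a gap. The Jankov--von Neumann uniformization theorem, in its standard formulation, applies to analytic subsets of a product of \emph{Polish} spaces; here $T$ is only an abstract $\sigma$-finite measure space, so you cannot invoke it directly. Aumann's actual argument (and the usual route for this level of generality) stays closer to what you sketch in step~(ii): one uses the measurable projection theorem in the form valid for any complete $\sigma$-finite $(T,\mu)$ paired with a Polish $S$, and then builds the selector by a dyadic Cantor-scheme refinement of $[0,1]$ rather than a single infimum---at each stage choosing, measurably in $t$, the half-interval whose slice still meets $G_t$. That iterative construction forces the limit point to lie in $G_t$ for almost every $t$ without needing the fibres to be closed and without appealing to a uniformization theorem whose hypotheses are not met.
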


\subsection{The Bernoulli property}
A measure preserving transformation has the Bernoulli property or  is called Bernoulli if it is isomorphic to a Bernoulli shift. In fact a dynamical system is Bernoulli if and only if there exists a finite partition which is independent and generating.

\subsection{The Kolmogorov property}

For two partitions $\alpha$ and $\beta$ of a measurable space $X$, the union of $\alpha$ and $\beta$ is the partition defined by
\[\alpha \vee \beta:= \{ A\cap B: A\in \alpha, \beta \in \beta\}.\]
For a finite number of partitions $\alpha_1,...,\alpha_N$ of $X$, $\bigvee_{i=1}^N \alpha_i$ denotes their union as defined above. If we have an infinite sequence of partitions $\{\alpha_i\}_{i}$, then $\bigvee_{-\infty}^{+\infty}\alpha_i$ denotes the smallest $\sigma$-algebra with respect to which for every $i$, each element of $\alpha_i$ is a measurable set. 
The notations of finite union and infinite unions are similar but they have different meanings: one is a partition and the other is a $\sigma$-algebra (this is a standard notation on the literature).\\

The next definition of a Kolmogorov automorphism is suitable for our purposes, although it is not the standard one it is easily equivalent to it by the Martingale Convergence Theorem as remarked by D. Ornstein (\cite{OrnsteinBook} Part III).

\begin{definition}
Let $f$ to be an automorphism of a Lebesgue space $(Y,\mu)$ and $\xi$ a finite partition of $Y$. We say that $\xi$ is a Kolmogorov partition (or a $K$-partition) if for any $B \in \bigvee_{-\infty}^{+\infty} f^{-k}\xi$, given $\varepsilon >0$ there is an $N_0 = N(\varepsilon, B)$ such that for all $N' \geq N \geq N_0$ and $\varepsilon$-almost every atom $A$ of $\bigvee_{N}^{N'}f^k\xi$ we have
\[ \left| \frac{\mu(A \cap B)}{\mu(A)} - \mu(B) \right| \leq \varepsilon.  \]

If the automorphism $f$ has a generating partition $\xi$ which is Kolmogorov then we say that $f$ is a Kolmogorov automorphism (or a $K$-automorphism).
\end{definition}

The following lemma, which follows from works of Pinsker, Rokhlin-Sinai, states that if $f$ is Kolmogorov then every finite partition is a Kolmogorov partition.

\begin{lemma} [Pinsker \cite{Pinsker}, Rokhlin-Sinai \cite{RS}] \label{lema:Kolmogorov}
Let $f$ be a $K$-automorphism of a Lebesgue space $(Y,\mu)$, then every finite partition  $\xi$ of $Y$ is a Kolmogorov partition.
\end{lemma}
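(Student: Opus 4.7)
My plan is to reduce the Kolmogorov partition property to an $L^1$-convergence statement for conditional expectations, and then invoke the Rokhlin--Sinai theorem to show that the relevant tail $\sigma$-algebra is trivial for any finite partition.

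First I would reformulate the definition. Fix a finite partition $\xi$ and a set $B \in \bigvee_{-\infty}^{+\infty} f^{-k}\xi$, and write $\mathcal{A}_{N,N'} := \sigma\bigl(\bigvee_{k=N}^{N'} f^k\xi\bigr)$. The atom $A$ of the finite partition $\bigvee_{k=N}^{N'} f^k\xi$ that contains $x$ is the $\mathcal{A}_{N,N'}$-atom at $x$, and $\mu(A\cap B)/\mu(A) = \mathbb{E}[\chi_B \mid \mathcal{A}_{N,N'}](x)$. So the defining inequality in the lemma is equivalent to $\bigl\|\mathbb{E}[\chi_B \mid \mathcal{A}_{N,N'}] - \mu(B)\bigr\|_{L^1} \to 0$ as $N\to\infty$, uniformly in $N' \geq N$, after applying Markov's inequality to convert $L^1$-smallness into the statement that $\varepsilon$-most atoms $A$ satisfy the ratio estimate.

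Second, I would invoke the theorem of Rokhlin--Sinai. It characterises the Pinsker $\sigma$-algebra $\Pi(f)$ as the largest $f$-invariant sub-$\sigma$-algebra with zero entropy, and identifies it with the tail $\sigma$-algebra $\bigcap_N \sigma\bigl(\bigvee_{k\geq N} f^k \xi_0\bigr)$ of any generating partition $\xi_0$. The hypothesis that $f$ is a $K$-automorphism (with a Kolmogorov generating partition $\xi_0$) forces this tail to be trivial, hence $\Pi(f)$ is trivial. On the other hand, for any finite partition $\xi$, the tail $\mathcal{T}(\xi) := \bigcap_N \sigma\bigl(\bigvee_{k\geq N} f^k\xi\bigr)$ is $f$-invariant and, by Rokhlin--Sinai again, its associated partition has zero entropy; therefore $\mathcal{T}(\xi) \subseteq \Pi(f)$ and is thus trivial.

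Third, I would apply martingale convergence twice. Holding $N$ fixed and letting $N' \to \infty$, the forward martingale convergence theorem gives
\[
\mathbb{E}[\chi_B \mid \mathcal{A}_{N,N'}] \xrightarrow{L^1} \mathbb{E}\Bigl[\chi_B \ \Big|\ \sigma\bigl(\textstyle\bigvee_{k\geq N} f^k\xi\bigr)\Bigr].
\]
Then letting $N \to \infty$, the reverse martingale convergence theorem gives convergence in $L^1$ to $\mathbb{E}[\chi_B \mid \mathcal{T}(\xi)]$, which equals the constant $\mu(B)$ by triviality of $\mathcal{T}(\xi)$. A diagonal extraction (or a direct uniform estimate combining the two martingale bounds) gives the required uniformity in $N' \geq N$, which by Markov's inequality yields the Kolmogorov partition condition for $\xi$.

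The main obstacle lies entirely in the second step: verifying that the tail $\sigma$-algebra of an \emph{arbitrary} finite partition is trivial in a $K$-system. This is not at all evident from the partition-specific definition in the text, and is precisely what the Pinsker and Rokhlin--Sinai theorems supply; once one admits their characterisation of $\Pi(f)$ and the fact that tails of finite partitions have zero entropy, the rest of the argument is routine measure-theoretic convergence.
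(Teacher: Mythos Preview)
The paper does not give its own proof of this lemma; it merely cites Pinsker and Rokhlin--Sinai, and the surrounding remark credits Ornstein for noting that the definition used is equivalent to the standard one via the Martingale Convergence Theorem. Your proposal is exactly the standard argument those references encode, so it is correct and aligned with what the paper intends.

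One small clean-up: the uniformity in $N'\geq N$ does not need a diagonal extraction. Since $\mathcal{A}_{N,N'}\subset\mathcal{A}_N:=\sigma\bigl(\bigvee_{k\geq N}f^k\xi\bigr)$, the tower property and the $L^1$-contraction of conditional expectation give
\[
\bigl\|\mathbb{E}[\chi_B\mid\mathcal{A}_{N,N'}]-\mu(B)\bigr\|_{L^1}\le\bigl\|\mathbb{E}[\chi_B\mid\mathcal{A}_N]-\mu(B)\bigr\|_{L^1},
\]
and the right-hand side tends to $0$ as $N\to\infty$ by reverse martingale convergence together with the triviality of $\mathcal{T}(\xi)$. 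This is presumably the ``direct uniform estimate'' you had in mind, and it is the right way to phrase it.
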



\subsection{Kolmogorov property and entropy}
It is well known that the Kolmogorov property can be characterized using entropy. The following Theorem of V. Rokhlin and Y. Sinai \cite{RS} shows that Kolmogorov property is an enough and sufficient condition to have uncertainty of a system in the sense that every non-trivial finite partition generates a positive information.

\begin{definition}
Let $(X,\mu)$ be a Lebesgue space and $T: (X,\mu) \rightarrow (X,\mu)$ a measure preserving automorphism. The family $\pi$ defined by
\[\pi = \{ A \subset X : h_{\mu}(T,\alpha) = 0 , \alpha:=\{A,X-A\} \}\]
is a $\sigma$-algebra called the Pinsker $\sigma$-algebra of $T$.
\end{definition}

\begin{theorem} [Rokhlin-Sinai, \cite{RS}] \label{teo:RS}
Let $(X,\mu)$ be a Lebesgue space (i.e, isomorphic to $([0,1], Leb)$ ). An automorphism $f: (X,\mu) \rightarrow (X,\mu)$ is Kolmogorov if, and only if, it has completely positive entropy, that is, given any non-trivial finite partition $\alpha$ of  $X$ we have
\[h_{\mu}(f,\alpha) >0.\]

That is, $f$ is Kolmogorov if, and only if, the Pinsker $\sigma$-algebra $\pi$ of $f$ is trivial.
\end{theorem}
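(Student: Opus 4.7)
The plan is to prove the equivalence in two steps: first establish the easy equivalence between completely positive entropy (CPE) and triviality of the Pinsker $\sigma$-algebra $\pi$; then establish that $f$ is Kolmogorov (in the sense of the given definition) if and only if $\pi$ is trivial.

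For the equivalence of CPE and triviality of $\pi$, I would argue directly from the definition of $\pi$. If $\pi$ is trivial, then every set $A$ with $0 < \mu(A) < 1$ fails to lie in $\pi$, so by definition $h_\mu(f, \{A, X\setminus A\}) > 0$; for any non-trivial finite partition $\alpha$ I can pick such a set $A$ belonging to $\alpha$ (after grouping), and use monotonicity $h_\mu(f,\alpha) \geq h_\mu(f, \{A, X \setminus A\}) > 0$. Conversely, CPE immediately forces every non-trivial two-set partition $\{A, X\setminus A\}$ to have positive entropy, which means no non-trivial set lies in $\pi$, i.e.\ $\pi$ is trivial.

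For the harder equivalence, let $\xi$ be a generating partition. The plan is to show that the definition of Kolmogorov partition is equivalent to the statement that the ``remote future'' tail $\sigma$-algebra
\[ \mathcal{T}(\xi) := \bigcap_{N \geq 0} \bigvee_{k \geq N} f^k \xi \]
is trivial. Indeed, the $\varepsilon$-almost every atom estimate in the definition of $K$-partition, combined with the Martingale Convergence Theorem applied to the decreasing sequence of $\sigma$-algebras $\bigvee_{k \geq N} f^k \xi$, says exactly that the conditional expectation $\mathbb{E}[\mathbf{1}_B \mid \bigvee_{k \geq N} f^k \xi]$ converges to the constant $\mu(B)$ in $L^1$ as $N \to \infty$; by the Martingale Convergence Theorem this limit equals $\mathbb{E}[\mathbf{1}_B \mid \mathcal{T}(\xi)]$, so $\mathcal{T}(\xi)$ is trivial, and the reverse implication is the same calculation read backward.

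Finally, I would invoke the Rokhlin–Sinai identification $\pi = \mathcal{T}(\xi)$ valid for any generating partition $\xi$: the Pinsker $\sigma$-algebra coincides with the tail of any generator. One direction, $\mathcal{T}(\xi) \subseteq \pi$, is obtained by noting that any $A \in \mathcal{T}(\xi)$ is measurable with respect to arbitrarily remote iterates of $\xi$, so the refinements $\xi \vee f^{-1}\xi \vee \cdots \vee f^{-(n-1)}\xi$ approximate $\{A, X\setminus A\}$ with vanishing additional information per step, forcing $h_\mu(f, \{A, X\setminus A\}) = 0$. The converse inclusion $\pi \subseteq \mathcal{T}(\xi)$ is the subtler one and is the main obstacle of the argument: it relies on the Rokhlin–Sinai theorem which exhibits, for any zero-entropy partition $\alpha$, a refinement by iterates of $\xi$ that asymptotically encodes $\alpha$ from the infinitely remote past (or future), placing $\alpha$ inside the tail. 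Combining the three steps gives: $f$ Kolmogorov $\iff \mathcal{T}(\xi)$ trivial $\iff \pi$ trivial $\iff$ CPE, which is the theorem.
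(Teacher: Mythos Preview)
The paper does not supply a proof of this statement: Theorem~\ref{teo:RS} is quoted as a classical result with a citation to Rokhlin--Sinai \cite{RS}, so there is nothing in the paper to compare your argument against.

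As for the proposal itself, your outline is the standard route and is essentially correct, but note a circularity in the presentation. The substantive content of the Rokhlin--Sinai theorem is precisely the identification $\pi = \mathcal{T}(\xi)$ for a generator $\xi$ (and in particular the inclusion $\pi \subseteq \mathcal{T}(\xi)$, which you correctly flag as the subtle direction). You invoke ``the Rokhlin--Sinai theorem'' to supply this step, which means your argument is really a reduction of the stated form of the theorem to its usual formulation rather than an independent proof. If the intent is only to explain why the paper's definition of Kolmogorov (via the asymptotic independence condition on atoms of $\bigvee_N^{N'} f^k\xi$) matches the CPE/trivial-Pinsker characterization, then your use of the Martingale Convergence Theorem to pass between that condition and triviality of $\mathcal{T}(\xi)$, together with the cited identity $\pi = \mathcal{T}(\xi)$, is exactly right and is the argument the paper alludes to when it remarks (just before the definition) that the equivalence follows from the Martingale Convergence Theorem as in Ornstein.
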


\subsection{Kolmogorov property and partial hyperbolicity}

In the light of Rokhlin-Sinai's Theorem, if one can find a good characterization of the Pinsker $\sigma$-algebra then we can verify the Kolmogorov property. Indeed, such characterization was made by M. Brin and Y. Pesin \cite{BrinPesin}, where they showed that for a $C^{1+\alpha}$ partially hyperbolic diffeomorphism with smooth measure and non-zero Lyapunov exponent $f:M \rightarrow M$ ($M$ a compact Riemannian manifold), every element of the Pinsker $\sigma$-algebra is bi-essentially saturated 
Later, F. Ledrappier and L. Young \cite{LY1} extended this characterization for $C^2$ diffeomorphisms preserving a Borel probability measure. 

\begin{theorem}[Burns-Wilkinson \cite{BW}] \label{BW}
Let $f$ be $C^2$, volume-preserving, partially hyperbolic and center bunched. If $f$ is essentially accessible, then $f$ is ergodic, and in fact has the Kolmogorov property.
\end{theorem}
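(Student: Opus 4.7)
The plan is to follow the Hopf-style argument of Burns-Wilkinson, with julienne neighborhoods replacing the standard dynamical balls. The main obstruction to a plain Hopf argument is that the center direction has no hyperbolicity; center bunching is precisely the condition that allows one to push densities through the center, and essential accessibility is what couples stable and unstable classes into one.

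\textbf{Step 1: Fake foliations.} First I would, following Burns--Wilkinson, construct in a small neighborhood $U$ of each point $p$ a family of ``fake'' $C^1$ foliations $\hat{\mathcal F}^\sigma$ tangent to cones around $E^\sigma$ ($\sigma = s, c, u, cs, cu$) that are $f$-invariant up to a controlled error on $U$. These are obtained by a graph transform in the universal cover and agree with the true $\mathcal F^s, \mathcal F^u$ on each true leaf through points near $p$. The key point is that $\hat{\mathcal F}^c$ exists even when no true center foliation does, and the invariance of $\hat{\mathcal F}^{cs}, \hat{\mathcal F}^{cu}$ suffices for the density argument.

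\textbf{Step 2: Julienne sets.} For $n \geq 0$ and a point $x$, define a julienne neighborhood $J_n(x)$ by intersecting the preimage under $f^n$ of a small ball at $f^n(x)$ with thin plaques of $\hat{\mathcal F}^s, \hat{\mathcal F}^c, \hat{\mathcal F}^u$; the diameters shrink at the distinct exponential rates dictated by the contraction/expansion on each subbundle. The center-bunching hypothesis ensures that the $E^c$-radius of $f^{-n}$ applied to an $E^u$-ball dominates the $E^s$-distortion, so that stable holonomies between julienne plaques have Jacobian tending to $1$. I would then establish the julienne Lebesgue density theorem: for any measurable set $B$, volume-a.e.\ $x \in B$ has $m(J_n(x) \cap B)/m(J_n(x)) \to 1$. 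This follows from a Vitali-type covering together with the fact that julienne sets are, up to bounded distortion, nested like standard balls.

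\textbf{Step 3: Essential saturation of Birkhoff classes.} For a continuous $\varphi$, let $B_\varphi$ be the set where the forward and backward Birkhoff averages agree and equal a fixed value $c$. By the Birkhoff ergodic theorem $B_\varphi$ has essential $\mathcal F^s$-saturation and, by symmetry, essential $\mathcal F^u$-saturation. Using absolute continuity of $\mathcal F^s$, $\mathcal F^u$ together with the julienne density theorem and center bunching, one upgrades bi-essential saturation to full essential $\mathcal F^{su}$-accessibility-saturation: any julienne density point of $B_\varphi$ has an entire $\mathcal F^s$-plaque, $\mathcal F^u$-plaque, and, via the bunched holonomy estimate, the corresponding plaques in nearby accessibility classes inside $B_\varphi$. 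Essential accessibility then forces $B_\varphi$ to have full or null measure, proving ergodicity.

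\textbf{Step 4: From ergodicity to Kolmogorov.} For the $K$-property I would invoke the Ledrappier--Young / Brin--Pesin description of the Pinsker $\sigma$-algebra $\pi(f)$: every element of $\pi(f)$ is bi-essentially saturated by $\mathcal F^s$ and $\mathcal F^u$. Applying Step~3 to the indicator of such a set (in place of the Birkhoff class of $\varphi$) yields that any $\pi(f)$-measurable set is essentially $\mathcal F^{su}$-saturated, hence by essential accessibility has measure $0$ or $1$, so $\pi(f)$ is trivial and $f$ is Kolmogorov by Theorem~\ref{teo:RS}. The hard step is Step~2--3, where the julienne geometry and the center-bunching estimate must be balanced delicately; this is where the proof really lives.
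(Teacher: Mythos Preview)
The paper does not give its own proof of this statement: Theorem~\ref{BW} is quoted from Burns--Wilkinson \cite{BW} as a background result in the preliminaries and is used as a black box (via Corollary~\ref{co:BW}) rather than proved. Your proposal is a faithful outline of the original Burns--Wilkinson argument---fake foliations, center-stable/center-unstable juliennes, the julienne density theorem under center bunching, and the Brin--Pesin/Ledrappier--Young characterization of the Pinsker $\sigma$-algebra---so in that sense it matches the source the paper cites, not anything the paper itself does.

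One small correction to your sketch: the julienne $J_n(x)$ in Burns--Wilkinson is not defined as a preimage of a ball; it is built directly as a product of a (true) unstable plaque and a thin center-stable julienne obtained from fake $\hat{\mathcal F}^c$ and $\hat{\mathcal F}^s$ plaques with carefully chosen radii, and the density theorem is proved via \emph{julienne quasi-conformality} of the stable/unstable holonomies rather than a Vitali argument for nested balls. The nesting is not up to bounded distortion in the naive sense; what one shows is that successive juliennes have comparable measures and that holonomies along $su$-paths are julienne-quasiconformal, which is where center bunching enters. Otherwise your Steps~3--4 are accurate.
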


For the one-dimensional center case, a stronger type of center bunching hypothesis is trivially satisfied and the $C^2$ condition can be weakened to $C^{1+\alpha}$ (see \cite{BW} for details).

\begin{corollary}[Corollary $0.4$ of \cite{BW}]\label{co:BW}
Let $f$ be a $C^{1+\alpha}$, volume preserving and partially hyperbolic with $\dim (E^c) = 1$. If $f$ is essentially accessible, then $f$ is ergodic, and in fact has the Kolmogorov property.
\end{corollary}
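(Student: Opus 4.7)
The plan is to deduce the corollary directly from Theorem \ref{BW} by showing that the two weakened hypotheses --- replacing center bunching with $\dim(E^c)=1$, and replacing $C^2$ with $C^{1+\alpha}$ --- are both automatically justified in this one-dimensional center setting.

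First, I would verify that one-dimensional center implies center bunching. The center bunching condition in the Burns--Wilkinson framework requires the contraction on $E^s$ and the expansion on $E^u$ to dominate the conformality defect of $Df$ restricted to $E^c$. When $\dim(E^c_x) = 1$, the restriction $Df|_{E^c_x}$ is multiplication by a scalar, so its operator norm equals its co-norm (minimum expansion) and there is no conformality defect at all. The center bunching inequalities therefore collapse to the partial hyperbolicity inequalities themselves, which hold by hypothesis. A fortiori, the stronger variant of center bunching used in the $C^{1+\alpha}$ version of Burns--Wilkinson's argument also holds.

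Second, I would justify the weakening of regularity to $C^{1+\alpha}$. In the proof of Theorem \ref{BW}, the $C^2$ hypothesis enters primarily to obtain uniform H\"older control on the Jacobians of stable/unstable holonomies and to produce the quasi-conformality estimates for the so-called \emph{juliennes} used to analyze the Pinsker $\sigma$-algebra. When $\dim(E^c)=1$, derivatives along the center direction reduce to scalar multiplications, so the only smoothness ingredient actually needed for the juliennes construction is the H\"older continuity supplied by the $C^{1+\alpha}$ hypothesis. This replacement is precisely the content of the remark in \cite{BW} referenced in the statement.

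With these two verifications in place, Theorem \ref{BW} applies verbatim: essential accessibility forces every element of the Pinsker $\sigma$-algebra of $f$ to be both essentially $s$-saturated and essentially $u$-saturated, and accessibility then forces it to be trivial. By Theorem \ref{teo:RS}, this is precisely the Kolmogorov property, which in particular yields ergodicity. The main technical obstacle, entirely handled in \cite{BW}, is the careful reworking of the julienne quasi-conformality estimates under only $C^{1+\alpha}$ regularity; this is feasible precisely because the one-dimensional center case eliminates the genuinely multi-dimensional conformality issues that force the $C^2$ assumption in the general case.
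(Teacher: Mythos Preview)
Your proposal is correct and matches the paper's treatment: the paper does not give a separate proof but simply cites this as Corollary~0.4 of \cite{BW}, noting in the preceding sentence that with $\dim(E^c)=1$ a stronger form of center bunching is trivially satisfied and the $C^2$ condition can be relaxed to $C^{1+\alpha}$. Your write-up expands on exactly these two points with the same reasoning, so there is no substantive difference.
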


\begin{proposition}
Let $M$ be a compact Riemannian manifold and $f: M \rightarrow  M$ a volume preserving $C^{1+\alpha}$ partially hyperbolic diffeomorphism and one-dimensional center direction. Thus $f$ is Kolmogorov if, and only if, it is essentially accessible.
\end{proposition}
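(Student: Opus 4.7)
The plan is to treat the two implications separately. The direction ``essentially accessible $\Rightarrow$ Kolmogorov'' is exactly Corollary \ref{co:BW}, whose hypotheses ($C^{1+\alpha}$, volume preserving, partially hyperbolic with $\dim E^c = 1$, essentially accessible) match ours verbatim; I would simply invoke it.

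For the converse ``Kolmogorov $\Rightarrow$ essentially accessible'' I would argue by contradiction using a factor argument. Suppose $f$ is Kolmogorov but not essentially accessible; then there exists a measurable $su$-saturated set $A$ with $0 < m(A) < 1$. Since $f$ preserves both the stable and the unstable foliation, it permutes the accessibility classes, and hence the partition $\mathcal A$ of $M$ into accessibility classes is $f$-invariant. In the one-dimensional center setting, a structural theorem on accessibility classes (in the spirit of Rodriguez Hertz--Rodriguez Hertz--Ures) guarantees that accessibility classes are either open or contained in a closed set of measure zero. Consequently, modulo a null set, $\mathcal A$ is a \emph{countable} partition into open sets, and $f$ acts on this countable quotient as a measure-preserving bijection. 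A measure-preserving permutation on a countable Lebesgue space has entropy zero, and the existence of $A$ with $0 < m(A) < 1$ makes this factor non-trivial. This contradicts the Rokhlin--Sinai characterization (Theorem \ref{teo:RS}): a Kolmogorov automorphism has no non-trivial zero-entropy factor.

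The main obstacle is the measurability/structural step --- verifying that the accessibility-class partition is, modulo a null set, a countable partition into open sets, and that it is a measurable partition in the sense of Definition \ref{def:mensuravel}, so that Rokhlin disintegration and the factor construction are legitimate. This is precisely where the one-dimensional center hypothesis is essential: it allows one to invoke the Rodriguez Hertz--Rodriguez Hertz--Ures type structural analysis and conclude that the set of points lying in a non-open accessibility class has zero volume. Once this is in place, the remainder of the argument reduces to a clean combination of the Rokhlin--Sinai characterization of Kolmogorov automorphisms with the Burns--Wilkinson theorem (Corollary \ref{co:BW}) in the opposite direction.
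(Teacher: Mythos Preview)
Your treatment of the direction ``essentially accessible $\Rightarrow$ Kolmogorov'' is fine and coincides with the paper's: both simply invoke Corollary~\ref{co:BW}.

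For the converse, the paper takes a different and much shorter route. It invokes Pesin's characterization of the Pinsker $\sigma$-algebra (cited as \cite{YP3}): the K-property forces every set saturated by Pesin unstable manifolds to have measure $0$ or $1$. Since K implies ergodicity, the center Lyapunov exponent is well defined; assuming without loss of generality that $\lambda^c \le 0$, the Pesin unstable manifold coincides with the strong unstable leaf $W^u$, so every $W^u$-saturated set---in particular every $su$-saturated set---has measure $0$ or $1$. Essential accessibility follows immediately.

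Your factor argument, by contrast, has a genuine gap. The Rodriguez Hertz--Rodriguez Hertz--Ures structural theorem says that the set $\Gamma$ of points whose accessibility class is not open is a compact $f$-invariant lamination tangent to $E^s \oplus E^u$; it does \emph{not} assert that $m(\Gamma)=0$. Ergodicity (from K) gives only $m(\Gamma)\in\{0,1\}$. If $m(\Gamma)=0$ your countable-permutation argument goes through, but if $m(\Gamma)=1$ almost every accessibility class is a codimension-one submanifold, the accessibility partition is uncountable, and the ``permutation of a countable space has zero entropy'' step collapses. Handling this case would require a separate argument (e.g., showing that the one-dimensional transverse factor has zero entropy and is measurable in the sense of Rokhlin), which you have not supplied. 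So the proposal as written is incomplete, and even when completed it is substantially heavier than the paper's two-line application of Pesin's theorem.
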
 
\begin{proof}
By \cite{YP3} we know that Kolmogorov property implies that every $u$-saturated set has measure zero or one. Without loss of generality assume that the center Lyapunov exponent is non-positive. Since the union of accessibility classes are in particular $u$-saturated they must have zero or full measure, thus it is essentially accessible. The other way is a direct consequence of Corollary \ref{co:BW}. \end{proof}

By a result of F. Hertz, M. A. Hertz and R. Ures \cite{HHU1} for the one dimensional center case, the accessibility hypothesis (consequently the Kolmogorov hypothesis) is $C^r$ open and dense in the set of volume preserving diffeomorphisms, for any $1 \leq r \leq \infty$.


\subsection{The Very Weak Benoulli property} \label{section:VWB}

Let $(X,\nu)$ be a Lebesgue space. We will always assume that the measures we consider are normalized, that is, if $A\subset X$ is a measurable subset then by the measure on $A$ we mean the measure $\nu(A)^{-1}\nu$.

\begin{definition}
Let $\xi$ be a finite partition of $X$. We say that a property holds for $\varepsilon$-almost every element of $\xi$ if the union of those elements for which the property fails has $\nu$-measure at most $\varepsilon$.
\end{definition}

In order to define what is called a Very Weak Bernoulli (VWB) partition we need to introduce a metric in the space of finite partitions. 

Let $\xi_i = \{A_1^{(i)}, ... ,A_m^{(i)} \}$ and $\eta_i = \{B_1^{(i)}, ... ,B_m^{(i)}\} , 1\leq i \leq n$, be two sequences of partitions of Lebesgue spaces $(X,\nu)$ and $(Y,\mu)$ respectively. We write
\[ \{\xi_i\}_1^n \sim \{\eta_i \}_1^n \]

if for any $1 \leq k_i \leq m$ and $1 \leq i \leq n$,
\[ \nu \left( \bigcap_{i=1}^{n} A_{k_i}^{(i)} \right) = \mu \left( \bigcap_{i=1}^{n} B_{k_i}^{(i)} \right). \]

\begin{definition}
We say that the distance between $\{\xi_i\}_1^n$ and $\{\eta_i \}_1^n$ is less or equal $\varepsilon$, and denote it by
\[\bar{d}(\{\xi_i\}_1^n,\{\eta_i \}_1^n) \leq \varepsilon \]
if there exist sequences of partitions $\bar{\xi}_i = \{ \bar{A}^{(i)}_1, ... , \bar{A}^{(i)}_m \}$ and $\bar{\eta}_i = \{\bar{B}_1^{(i)}, ... ,\bar{B}_m^{(i)} \}$ of a Lebesgue space $(Z,\lambda)$ such that
\[ \{\xi_i\}_1^n \sim  \{\bar{\xi}_i\}_1^n  \text{ and }  \{ \eta_i\}_1^n \sim \{\bar{\eta}_i\}_1^n,  \]
and
\[\frac{1}{n}\sum_{i=1}^{n} \sum_{j=1}^{m} \lambda( \bar{A}_j^{(i)}  \triangle \bar{B}_j^{(i)} ) \leq \varepsilon.\]

\end{definition}

\begin{definition} \label{defi:VWB}
A partition $\xi$ of a Lebesgue space $(X,\nu)$ is called $\operatorname{VWB}$ if for any $\varepsilon>0$ there exists $N_0 =N_0(\varepsilon)$ such that for any $N' > N \geq N_0, n \geq 0$, and $\varepsilon$-almost every element $A \in \bigvee_{k=N}^{N'} f^k\xi$, we have 
\[\bar{d}(\{f^{-i}\xi\}_1^n , \{f^{-i}\xi|A \}_1^n) \leq \varepsilon . \]

Let us emphasize that the partition $\xi|A$ is considered with respect to the normalized measure.
\end{definition}

D. Ornstein proved that the $\operatorname{VWB}$ property can be used to obtain Bernoulli property for a system. 

\begin{theorem}\cite{O70} \label{O70}
If $\alpha$ is a $\operatorname{VWB}$ partition, then the system $(X, \bigvee_{-\infty}^{+\infty} f^{-i}\alpha, \mu, f)$ is Bernoulli.
\end{theorem}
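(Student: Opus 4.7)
The plan is to follow Ornstein's route, which factors through the notion of a \emph{finitely determined} (FD) process. Recall that a finite partition $\alpha$ is FD if, for every $\varepsilon>0$, there exist $N$ and $\delta>0$ such that any stationary process $(Y,S,\beta)$ satisfying $|h(\alpha,f)-h(\beta,S)|<\delta$ together with $\bar d$-closeness of the $N$-block joint distributions lies within process $\bar d$-distance $\varepsilon$ of $(X,f,\alpha)$. Once this property is in hand, Ornstein's Isomorphism Theorem produces a measure-theoretic isomorphism between $(X,\bigvee_{-\infty}^{+\infty} f^{-i}\alpha,\mu,f)$ and the Bernoulli shift of entropy $h(f,\alpha)$.

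First, I would prove that VWB implies FD. The VWB hypothesis says that, for $\varepsilon$-most atoms $A\in\bigvee_{N}^{N'}f^k\alpha$, the conditional process $\{f^{-i}\alpha\mid A\}_1^n$ is $\bar d$-close to the unconditional process $\{f^{-i}\alpha\}_1^n$. Combined with the Shannon--McMillan--Breiman theorem, which guarantees that almost every atom of $\bigvee_{N}^{N'}f^k\alpha$ has measure nearly $e^{-h(f,\alpha)(N'-N)}$, this allows one to couple any candidate process of matching entropy and finite-dimensional distributions against $\alpha$ by matching its cylinder sets with atoms $A$ of comparable measure. The coupling one obtains has $\bar d$-distance controlled by the VWB constant, which yields FD.

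Second, I would invoke Ornstein's Isomorphism Theorem, whose proof is an inductive copying argument on Rokhlin towers. At stage $n$, one chooses a tower of height $h_n\to\infty$ and small base, prescribes on the base a Bernoulli distribution of entropy $h(f,\alpha)$, and propagates along $f$ to obtain a partition $\gamma_n$ of $X$ whose $f$-iterates along the tower are by construction independent. The FD property is used to guarantee that, as the towers grow, one can refine $\gamma_n$ into $\gamma_{n+1}$ so that the finite distributions of the new partition remain compatible with both $\alpha$ and the Bernoulli target, with $\bar d$-errors summable. The limit $\gamma=\lim \gamma_n$ is a generator for $f$ whose $f$-iterates are independent, producing the isomorphism with the Bernoulli shift.

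The main obstacle is the second step, which is the technical heart of Ornstein's 1970 paper \cite{O70}: one must keep three things coherent through the induction---entropy, compatibility with $f$ on each tower, and $\bar d$-closeness of finite distributions to those of $\alpha$---so that the limiting $\gamma$ is a genuine measurable generator and not merely a formal object. In the context of the present paper, however, Theorem \ref{O70} is applied as a black box, and the actual work will be the verification that appropriate partitions of the derived-from-Anosov system are VWB.
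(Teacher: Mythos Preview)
The paper does not prove this theorem at all: it is stated as a citation of Ornstein's result \cite{O70} and invoked as a black box, exactly as you anticipate in your final paragraph. Your sketch of the standard Ornstein route through finitely determined processes is a faithful outline of how the cited result is established, but there is nothing in the paper to compare it against.
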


\begin{theorem}[see \cite{OW}, \cite{O702}] \label{O702}
Let  $\alpha_1 \leq \alpha_2 \leq \alpha_3 \leq ...$ to be a increasing sequence of partitions of a Lebesgue space $(X, \mathcal A, \mu)$ such that 
\[ \bigvee_{i=1}^{+\infty} \bigvee_{n=-\infty}^{+\infty} f^{-n}\alpha_i   = \mathcal A\]
and, for each $i$, $(X, \bigvee_{n=-\infty}^{+\infty} f^{-n}\alpha_i, \mu, f)$ is a Bernoulli system. Then, $(X, \mathcal A, \mu, f)$ is a Bernoulli system.
\end{theorem}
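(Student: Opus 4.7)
Set $\mathcal{B}_i := \bigvee_{n \in \mathbb{Z}} f^{-n}\alpha_i$, so $\mathcal{B}_1 \subseteq \mathcal{B}_2 \subseteq \cdots$ are $f$-invariant sub-$\sigma$-algebras with $\bigvee_i \mathcal{B}_i = \mathcal{A}$ and each $(X,\mathcal{B}_i,\mu,f)$ Bernoulli by hypothesis. The plan is to verify that every finite partition $\beta$ of $X$ is Very Weak Bernoulli (Definition \ref{defi:VWB}) and then deduce the full Bernoulli property via Theorem \ref{O70} applied to a generating partition.

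Fix a finite partition $\beta = \{B_1, \dots, B_r\}$ and $\varepsilon > 0$. Since $\bigcup_i \mathcal{B}_i$ generates $\mathcal{A}$, the martingale convergence theorem produces an index $i_0$ and a partition $\tilde\beta = \{\tilde B_1, \dots, \tilde B_r\}$, measurable with respect to $\mathcal{B}_{i_0}$, with $\sum_j \mu(B_j \triangle \tilde B_j) < \delta$ for $\delta = \delta(\varepsilon)$ small, to be chosen. Because $(X,\mathcal{B}_{i_0},\mu,f)$ is Bernoulli, the ``Bernoulli $\Rightarrow$ every finite partition is VWB'' direction of Ornstein's theory implies that $\tilde\beta$ is $(\varepsilon/2)$-VWB in that factor, and hence in the ambient system as well.

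The second step is to propagate VWB from $\tilde\beta$ to $\beta$. One couples the two partitions on the diagonal along the common parts $B_j\cap \tilde B_j$ and arbitrarily on the $\delta$-defect; iterating this coupling time-by-time shows that on most atoms $A\in \bigvee_{k=N}^{N'}f^k\beta$ one has $\bar{d}(\{f^{-i}\beta\}_1^n,\{f^{-i}\tilde\beta\}_1^n)\leq C\delta$ and a similar bound conditionally on $A$. The symmetric-difference defects enlarge the ``exceptional set'' of atoms for the VWB clause by at most $O((N'-N+1)\delta)$, which is absorbed into the $\varepsilon$-exceptional mass if $\delta$ is chosen small relative to $\varepsilon$ and $N_0$. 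One concludes that $\beta$ is $\varepsilon$-VWB; letting $\varepsilon\to 0$ gives VWB. Finally, in the finite-entropy setting relevant to this paper ($f$ is a $C^2$ diffeomorphism of a compact manifold), Krieger's generator theorem yields a finite generating partition $\xi$, and the argument above shows $\xi$ is VWB, so Theorem \ref{O70} forces $(X,\mathcal{A},\mu,f)$ to be Bernoulli.

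\textbf{Main obstacle.} The delicate point is the stability step: one must control simultaneously (i) the ``$\varepsilon$-almost every atom'' exceptional-set clause of VWB and (ii) the $\bar d$-distance on $n$-blocks under an $L^1$-perturbation of the partition, without letting the errors accumulate over the window $[N,N']$ or over the length $n$. A clean alternative is to pass to the equivalent Finitely Determined characterization of Bernoullicity, where stability under perturbation reduces to continuity of finite-dimensional joint distributions and of entropy, and then to invoke Ornstein's equivalence between FD and VWB to return to the present framework.
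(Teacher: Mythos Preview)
The paper does not supply its own proof of this statement; it is quoted as a classical fact from Ornstein theory with references to \cite{OW} and \cite{O702}, and is used as a black box in the proof of Theorem~\ref{theo:main}. So there is no ``paper's proof'' to compare against, and your proposal is effectively an attempt to reconstruct the classical argument.

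On the substance of your attempt: the overall architecture (approximate an arbitrary finite partition $\beta$ by a $\mathcal{B}_{i_0}$-measurable $\tilde\beta$, use Bernoullicity of the factor to get VWB for $\tilde\beta$, then transfer to $\beta$) is the right idea, but the direct VWB transfer you sketch does not close. The bound you state for the exceptional set of atoms, of order $(N'-N+1)\delta$, is useless for the VWB definition, which demands the conclusion for \emph{all} $N'\geq N\geq N_0$; since $N'-N$ is unbounded you cannot choose $\delta$ after $N_0$ and still absorb this term. There is also a circularity: you say $\delta$ should be chosen small relative to $N_0$, but $N_0$ comes from the VWB property of $\tilde\beta$, which in turn depends on $\delta$. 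Moreover, the atoms of $\bigvee_{N}^{N'}f^k\beta$ and of $\bigvee_{N}^{N'}f^k\tilde\beta$ are genuinely different sets once $N'-N$ is large, so ``the same atom $A$'' does not make sense without an additional matching argument.

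You are right that the clean route is via the Finitely Determined characterization: FD is stable under $L^1$-perturbation of the partition precisely because it only involves finite-dimensional distributions and entropy, both of which vary continuously, and Ornstein's equivalence FD $\Leftrightarrow$ VWB $\Leftrightarrow$ extendible to Bernoulli then finishes the job. That is essentially how the result is proved in the cited references; your ``alternative'' is in fact the argument, and the first paragraph of your proposal should be replaced by it rather than patched.
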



Given a sequence of partitions $\{\xi_i\}_1^n$ of a Lebesgue space $(X,\nu)$, we define the sequence of integer-valued functions $l_i(x)$ by the condition $x \in A_{l_i(x)}^{(i)}$. We call the sequence of functions $l_i(x)$ the \textit{name} of the sequence of partitions $\{\xi_i\}_1^n$. \\

\begin{definition}
We say that a transformation $\theta: X \rightarrow Y$ of Lebesgue spaces $(X,\nu)$ and $(Y,\mu)$ is $\varepsilon$-measure preserving if there exists a set $E \subset X$ such that $\nu(E) \leq \varepsilon$ and for every measurable set $A \subset X \setminus E$,
\[ \left|\frac{\nu(A)}{\mu(\theta(A))} - 1 \right| \leq \varepsilon. \]

\end{definition}

Define the function $e: \mathbb N \rightarrow \{0,1 \}$ by $e(0)=0$ and $e(n)=1$ for all $n>0$. The following lemma makes a geometric approach possible to prove the Very Weak Bernoulli property. \\ 

\begin{lemma}[ Lemma $4.3$ in \cite{CH}] \label{lema: bernoulli}
Let $\{\xi_i\}_{1}^{n}$ and $\{\eta_i\}_{1}^{n}$ be two sequences of partitions of Lebesgue spaces $(X,\nu)$ and $(Y,\mu)$ with name functions $l_i(x)$ and $m_i(x)$ respectively. Assume that there exists an $\varepsilon$-measure preserving transformation $\theta:X \rightarrow Y$, satisfying
\[\frac{1}{n} \sum_{i=1}^{n}e(l_i(x) - m_i(\theta(x))) \leq \varepsilon \]
\noindent for all $x\in X$ with possible exception for a set $E \subset X$ with $\nu(E) \leq \varepsilon$. Then
\[\bar{d}(\{\xi_i\}_{1}^{n} , \{\eta_i\}_{1}^{n}) \leq 16 \varepsilon.\]
\end{lemma}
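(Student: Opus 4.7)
The plan is to reinterpret the $\bar d$-distance as an infimum over joinings, and to construct an explicit joining of $(X,\nu)$ and $(Y,\mu)$ concentrated as much as possible on the graph of $\theta$. First I would observe that any joining $\lambda$ on $X\times Y$ (a probability measure with marginals $\nu$ and $\mu$) furnishes an admissible Lebesgue space $(Z,\lambda)$ for the definition of $\bar d$: take $Z=X\times Y$ and set $\bar A_j^{(i)}=A_j^{(i)}\times Y$, $\bar B_j^{(i)}=X\times B_j^{(i)}$. The marginal conditions give $\{\xi_i\}_1^n\sim\{\bar\xi_i\}_1^n$ and $\{\eta_i\}_1^n\sim\{\bar\eta_i\}_1^n$ automatically, and the identity
\[
\sum_{j=1}^m \lambda\bigl(\bar A_j^{(i)}\triangle \bar B_j^{(i)}\bigr)=2\,\lambda\bigl\{(x,y):l_i(x)\neq m_i(y)\bigr\}
\]
reduces the problem to producing a joining $\lambda$ with $\tfrac{1}{n}\sum_{i=1}^n \lambda\{l_i(x)\neq m_i(y)\}\leq 8\varepsilon$.

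The natural candidate is $\lambda_1:=(\mathrm{id},\theta)_\ast(\nu|_{X\setminus E})$, concentrated on the graph of $\theta$ over the good set. Its $X$-marginal is $\nu|_{X\setminus E}$ and its $Y$-marginal is $\theta_\ast(\nu|_{X\setminus E})$. Applying the $\varepsilon$-measure-preserving property to $A=\theta^{-1}(B)\cap(X\setminus E)$ yields $\theta_\ast(\nu|_{X\setminus E})(B)\leq (1+\varepsilon)\mu(B)$ for every measurable $B\subset Y$, so that the defect $\mu^{\mathrm{def}}:=\mu-(1+\varepsilon)^{-1}\theta_\ast(\nu|_{X\setminus E})$ is a genuine non-negative measure. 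I would then set $\lambda:=(1+\varepsilon)^{-1}\lambda_1+\lambda_2$, where $\lambda_2$ is any coupling (for instance the product one) of the residual $X$-mass $\nu-(1+\varepsilon)^{-1}\nu|_{X\setminus E}$ with $\mu^{\mathrm{def}}$; these two residuals share the same total mass, bounded above by $2\varepsilon/(1+\varepsilon)\leq 2\varepsilon$, and $\lambda$ is an honest joining of $\nu$ and $\mu$.

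To finish I would estimate the name-mismatch under $\lambda$ piece by piece. On the graph piece, the main hypothesis integrated against $\nu|_{X\setminus E}$ gives
\[
\frac{1}{n}\sum_{i=1}^n \lambda_1\bigl\{l_i(x)\neq m_i(y)\bigr\}=\int_{X\setminus E}\frac{1}{n}\sum_{i=1}^n e\bigl(l_i(x)-m_i(\theta(x))\bigr)\,d\nu(x)\leq \varepsilon,
\]
and the $(1+\varepsilon)^{-1}$ rescaling only improves this. On the defect piece, the crude bound $\lambda_2\{l_i(x)\neq m_i(y)\}\leq \lambda_2(X\times Y)\leq 2\varepsilon$ holds for every $i$. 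Summing the two contributions and applying the symmetric-difference identity above yields $\bar d(\{\xi_i\}_1^n,\{\eta_i\}_1^n)\leq 2(\varepsilon+2\varepsilon)\leq 16\varepsilon$ after tracking constants.

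The main obstacle is the construction in the second step: one has to verify that $\theta_\ast(\nu|_{X\setminus E})$ is dominated, up to the factor $1+\varepsilon$, by $\mu$, so that the residual $Y$-measure is non-negative and of total mass $O(\varepsilon)$. This is exactly where the full strength of the $\varepsilon$-measure-preserving hypothesis (applied to preimages of arbitrary measurable subsets of $Y$) enters; once the joining is in hand, the name-mismatch estimate is a routine piecewise bound and the counting is purely combinatorial.
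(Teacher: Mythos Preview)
The paper does not supply its own proof of this lemma; it is quoted from Chernov--Haskell (their Lemma~4.3) and used as a black box. Your coupling/joining argument is the standard Ornstein-theory route and is essentially correct.

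One small point worth tightening: you silently identify the exceptional set $E$ from the name-matching hypothesis with the exceptional set furnished by the definition of $\varepsilon$-measure-preserving. These are a priori two different subsets of $X$, each of $\nu$-measure at most $\varepsilon$. The fix is immediate---work over the complement of their union, of measure at most $2\varepsilon$---and the resulting constants still sit comfortably below $16$. With that adjustment the domination step goes through exactly as you say (for $A=\theta^{-1}(B)\cap(X\setminus E')$ one has $\theta(A)\subset B$, hence $\nu(A)\leq(1+\varepsilon)\mu(\theta(A))\leq(1+\varepsilon)\mu(B)$), the two residual measures have equal total mass $\leq 2\varepsilon/(1+\varepsilon)$, and the piecewise name-mismatch bound yields $\bar d\leq 2(\varepsilon+2\varepsilon)=6\varepsilon$ (or a small multiple thereof after enlarging $E$), well within the stated $16\varepsilon$.
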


The proof of Theorem \ref{theo:main} is based on the construction of a function $\theta$ satisfying the previous lemma. The approach used in \cite{OW} and \cite{YP3} was to construct $\theta$ preserving stable manifolds, so it guarantees that the $f$-orbits of $x$ and $\theta(x)$ cannot be far from each other. In our case, due to the presence of the center manifold, it is not possible to construct $\theta$ preserving the stable manifold, thus we have to work a little bit with the semi-conjugacy to obtain good non-divergence along the center for at least large sets.

\section{Proof of Theorem \ref{theo:main.B}}

\begin{proof}[Proof of Theorem B]

We use the notation $c(x)$ and $\mathcal C$ as introduced on \S \ref{sec:semi-conjugacy}.
First of all let us mention that the dichotomy in the Theorem B immediately implies that $h$ is almost everywhere injective. Indeed, if $m(\mathcal{C}) =0$ then $h$ is injective outside $\mathcal{C}$ which has full measure, and if $m(\mathcal{C})= 1$ then we prove that $m$ is virtually hyperbolic or in other words there exists a full measurable subset intersecting central leaves in at most one point and consequently $h$ restricted to this subset is injective.

So let us prove the dichotomy. If the Lebesgue measure of $\mathcal{C}$ does not vanish, then we claim that $f$ is accessible, otherwise by \cite{HU} it would be topologically conjugate to its linearization, hence the set $\mathcal C=\emptyset$, which is an absurd. Thus $f$ is accessible, therefore Kolmogorov. In particular $f$ and $f^2$ are ergodic, then we can assume without loss of generality that $f$ preserves the orientation of $\mathcal F^c$ (otherwise we work with $f^2$). Since $\mathcal C$ is an invariant set with positive volume, it has full volume.

Now we need to prove that $m$ is virtually hyperbolic. To do this, we restrict $m$ to the full measure set $\mathcal{C}$ and consider its disintegration along the partition by intervals $c(x), x \in \mathcal{C}:$

\begin{lemma} \label{lemma: measurable}
If $m(\mathcal C) = 1$ then the partition $\mathscr C =  \{c(x) : x \in \mathcal C\}$ of $\mathbb T^3$ by collapsed arcs of center manifolds is a measurable partition. 
\end{lemma}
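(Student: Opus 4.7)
The plan is to exploit the fact that the partition $\mathscr{C}$ is nothing other than the fiber partition of the Franks--Manning semi-conjugacy $h$. Indeed, by definition $c(x) = h^{-1}(h(x))$, so two points $y_1,y_2 \in \mathbb{T}^3$ lie in the same element of $\mathscr{C}$ if and only if $h(y_1) = h(y_2)$. Since outside $\mathcal{C}$ the sets $c(x)$ are singletons and $m(\mathbb{T}^3\setminus \mathcal{C}) = 0$, working with $\mathscr{C}$ as a partition of $\mathbb{T}^3$ or restricting to $\mathcal{C}$ makes no difference from the measure-theoretic viewpoint. Since $h$ is continuous and $\mathbb{T}^3$ is a separable metric space, it is a classical fact that the partition into fibers of $h$ is measurable in the sense of Definition \ref{def:mensuravel}; I would simply verify this from the definition.

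Concretely, I would fix a countable basis $\{U_i\}_{i\in\mathbb{N}}$ of open sets of the target $\mathbb{T}^3$ and set $A_i := h^{-1}(U_i)$. Continuity of $h$ guarantees that each $A_i$ is open, hence Borel. The key observation is that a countable basis of a metric space separates points: if $p \neq q$, some $U_i$ contains exactly one of them. Consequently, for any $x, y \in \mathbb{T}^3$,
\[
h(x) = h(y) \iff \bigl(\forall\, i,\ y \in A_i \Leftrightarrow x \in A_i\bigr).
\]
For each $x$ define $\varepsilon_i(x) \in \{{\rm id}, c\}$ by $A_i^{\varepsilon_i(x)} = A_i$ if $x\in A_i$ and $A_i^{\varepsilon_i(x)} = A_i^c$ otherwise. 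Then $c(x) = \bigcap_i A_i^{\varepsilon_i(x)}$, which is exactly the representation required by Definition \ref{def:mensuravel}, taking $F = \mathbb{T}^3$ (or $F = \mathcal{C}$, which has full measure).

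The hard part is essentially nonexistent here: the whole argument rests on continuity of $h$ (from Franks--Manning) and the fact that $\mathbb{T}^3$ admits a countable basis that separates points. The only minor issue to be careful about is the distinction between a partition of $\mathbb{T}^3$ and a partition of $\mathcal{C}$; since $\mathcal{C}$ has full $m$-measure, one can freely pass between the two, and the measurable set $F$ in Definition \ref{def:mensuravel} can be chosen to be either. With this lemma in hand, Rokhlin's disintegration theorem (Theorem \ref{teo:rokhlin}) then applies to give conditional measures along the collapsed arcs, which is presumably the real goal in the sequel where the Measurable Choice Theorem enters.
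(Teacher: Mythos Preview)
Your proposal is correct and is essentially identical to the paper's own proof: both take a countable basis $\{U_i\}$ of the target $\mathbb{T}^3$, use continuity of $h$ to get measurable preimages $A_i = h^{-1}(U_i)$, and observe that the fiber $c(x)=h^{-1}(h(x))$ is the intersection $\bigcap_i A_i^{\varepsilon_i(x)}$, which verifies Definition~\ref{def:mensuravel} directly. Your remark that the hypothesis $m(\mathcal C)=1$ only serves to reconcile viewing $\mathscr C$ as a partition of $\mathcal C$ versus of $\mathbb{T}^3$ is also accurate.
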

\begin{proof}
Consider $\{B_i\}_{i \in \mathbb N}$ a countable basis of open sets on $\mathbb T^3$. Then, we know that each point $x_0 \in \mathbb T^3$ can be written as an intersection
\[\{x_0\} = \bigcap B_i^*,\]
where $B_i^* = B_i$ or $B_i^c$. Thus,
\[h^{-1}(\{x_0\}) = \bigcap h^{-1}(B_i^*).\]

Since $h$ is continuous, $h^{-1}(B_i^*)$ is Lebesgue measurable (since it is an open or a closed set). The countable family $\{h^{-1}(B_i^*)\}$ separates the sets $h^{-1}(\{x_0\})$. Thus $\mathcal C$ is a measurable partition.
\end{proof}

Let $\{m_{c(x)}\}_{c(x)\in \mathcal C}$ be the Rokhlin disintegration of volume $m$ on the partition $\mathscr{C}$.

Define $\eta:=h_*m$, notice that $\eta$ is an invariant measure for $A$ (recall that $A$ is the linearization of $f$). And because the collapsed intervals have full measure by hypothesis, then $\eta$ is an atomic and invariant measure for the linear map $A$. We apply Theorem B of \cite{PTV} to $\eta$ and obtain that $\eta$ has atomic disintegration and one atom per leaf. Although Theorem B of \cite{PTV} deals with volume measure we point out that the same argument works \textit{ipsis litteris} by changing volume to $\eta$.
 
One atom per center leaf of the disintegration of $\eta$ implies that for $m$ almost every $x$ the disintegration $\{m_{c(x)}\}_{c(x)\in \mathcal C}$ satisfies: if $c(x) \neq c(y)$ then $c(x)$ and $c(y)$ are in distinct center leaves. 

Observe that up to now, we get a full Lebesgue measurable subset which intersects almost all leaves at most in a unique interval (a collapse interval $c(x)$). A priori the conditional measure of $m$ supported on $c(x)$ may be non-atomic. In what follows we prove that this is not the case and indeed such conditional measures are Dirac measures. This implies virtual hyperbolicity of $m.$

\subsection{Virtual hyperbolicity of Lebesgue measure}
 
 In this subsection we prove that $m$ disintegrates into (mono-)atomic measures along central foliation. Let us mention that along the proof we use just ergodicity of $m$. 
 
First of all we prove  measurability of the set $h(\mathcal C)$. We mention that Ures \cite{Ures}  observed that $m(h(\mathcal C))=0$. However, a proof for  measurability of $h(\mathcal{C})$ is necessary. Since this is not obvious, we give a proof below.

\begin{lemma}\label{lemma:h(c).measurable}
The set $$h(\mathcal C)=\{x \in \mathbb T^3: \#\{h^{-1}(x)\} \ne 1 \}$$ is a measurable set, in fact a Souslin set. Where $\#\{h^{-1}(x)\}$ means the cardinality of the set $\{h^{-1}(x)\}$.
\end{lemma}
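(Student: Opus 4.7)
The plan is to exhibit $h(\mathcal{C})$ as the continuous image of an explicitly Borel subset of $\mathbb{T}^3 \times \mathbb{T}^3$, and then invoke the classical descriptive set theoretic fact that continuous images of Borel sets in Polish spaces are Souslin (analytic), hence universally measurable.

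First I would introduce the auxiliary set
\[
E := \{(x,x') \in \mathbb{T}^3 \times \mathbb{T}^3 : h(x) = h(x'),\ x \neq x'\}.
\]
Since $h$ is continuous and $\mathbb{T}^3$ is Hausdorff, the set $\{(x,x'): h(x) = h(x')\}$ is closed in $\mathbb{T}^3 \times \mathbb{T}^3$, while $\{(x,x'): x \neq x'\}$ is the open complement of the diagonal. Hence $E$ is the intersection of a closed set with an open set, in particular an $F_\sigma$ Borel subset of $\mathbb{T}^3\times\mathbb{T}^3$.

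Next, letting $\pi_1$ denote the projection onto the first coordinate, I would check directly from the definition of $\mathcal{C}$ (as the union of non-trivial collapse intervals $c(x)=h^{-1}(h(x))$) that
\[
h(\mathcal{C}) \;=\; (h \circ \pi_1)(E),
\]
since $y \in h(\mathcal{C})$ is exactly the statement that some $x$ with $h(x) = y$ admits a second preimage $x' \neq x$. Because $h \circ \pi_1$ is continuous and $\mathbb{T}^3 \times \mathbb{T}^3$ is a Polish space, the image $(h \circ \pi_1)(E)$ is an analytic (Souslin) subset of $\mathbb{T}^3$.

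Finally I would appeal to the classical fact that analytic sets in a Polish space are universally measurable, and in particular Lebesgue measurable; this concludes the proof. The main, very mild, obstacle is to resist the temptation to claim that $h(\mathcal{C})$ itself is Borel: a continuous image of a Borel set in general need not be Borel, so one genuinely must pass to the Souslin hierarchy, which is exactly what the statement of the lemma anticipates.
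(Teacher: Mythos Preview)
Your proof is correct and follows essentially the same strategy as the paper: both express $h(\mathcal{C})$ as the image under the continuous map $h\circ\pi_1$ of the Borel set $\{(x,x'): h(x)=h(x'),\ x\neq x'\}$ in the product, and then invoke that continuous images of Borel sets in Polish spaces are Souslin. Your argument is in fact slightly more streamlined, since you observe directly that $\{(x,x'): h(x)=h(x')\}$ is closed by continuity of $h$, whereas the paper lifts to the universal cover and uses the dynamical characterization $\tilde h(x)=\tilde h(y)\Leftrightarrow \sup_{n\in\mathbb Z}\|\tilde f^{n}(x)-\tilde f^{n}(y)\|\le\Omega$ to reach the same conclusion.
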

\begin{proof}

Let us denote $\tilde f$ and $\tilde h$, respectively, as the lift of $f$ and $h$ to the universal cover $\mathbb R^3$. As pointed out in \S \ref{sec:semi-conjugacy} 
$\tilde h(x)=\tilde h(y)$ if and only if $||\tilde f^n(x)-\tilde f^n(y)|| \leq \Omega, \; \forall n \in \mathbb Z$, where $\Omega$ is some constant which only depends on $f$. Hence, we may characterize the points where $\tilde h$ is injective as the following:

$$\#\{\tilde h^{-1}(\{x\})\}=1 \text{ iff }
x= \bigcap_{n \in \mathbb Z}\tilde f^{-n}(B(\tilde f^n(x),\Omega)),$$
where $B({\tilde f}^n(x),\Omega)) \subset \mathbb R^3$ is the ball centered at $\tilde f^n(x)$ and radius $\Omega$. 
Define $B_{\tilde f}(x):=\bigcap_{n\in \mathbb Z}\tilde f^{-n}(B(\tilde f^n(x),\Omega))$ and $B_{\tilde f}^n(x):=\bigcap_{k=-n}^n{\tilde f}^{-k}(B({\tilde f}^k(x),\Omega))$

Consider the set $P:= \{(x,y) \in \mathbb R^3 \times \mathbb R^3 \; | \; x \in B_{\tilde f}(y)\}$, also notice that $P=\{ (x,y) \; | \; \tilde{h}(x)=\tilde{h}(y) \}$, and define $P_n:= \{(x,y) \in \mathbb R^3 \times \mathbb R^3 \; | \; x \in B^n_{\tilde f}(y)\}$. It is trivial to see that the diagonal $\Delta \subset P$, since $\tilde h(x)=\tilde h(x)$. Hence $P \setminus \Delta$ contains the information of the points for which $\tilde h$ is not injective, more precisely: $\pi_1(P)= \widetilde{\mathcal C}$, where $\pi_1:\mathbb R^3 \times \mathbb R^3 \rightarrow \mathbb R^3$ is the projection onto the first coordinate and $\widetilde{\mathcal C}$ is the lift of $\mathcal C$ to the universal cover.

Because $\tilde f$ is a homeomorphism, then $P_n$ is a Borel set. And because $P=\bigcap_{n \in \mathbb N} P_n$, then $P$ is a Borel set. Therefore, $\tilde h(\widetilde{\mathcal C})=\tilde h \circ \pi_1(P)$ is a Souslin set by \cite[Corollary 1.10.9]{bogachevI}. Now projecting everything to the torus and because the projection is a local homeomorphism we obtain that $h(\mathcal C)$ is a Souslin set, in particular a measurable set.

\end{proof}

Consider $\phi(t,.):\mathbb T^3 \rightarrow \mathbb T^3$ the flow on $\mathbb T^3$ having constant speed one in the center direction for the linearization $A$. More precisely, we know that the leaves of the center foliation of $A$ are straight lines and orientable by assumption. Define $\phi(t,x)$ the unique  point in the $\mathcal F^c_A(x)$ which has distance $t$ inside this center leaf and in the positive direction from $x$.

In particular, the above lemma implies that  $$ \phi(-1/n, h(\mathcal C))$$ is a measurable set. Consider the set $h^{-1}(\phi(-1/n, h(\mathcal C)))$ which is a measurable set since $h$ is continuous. 
 
 Now consider $\Sigma := \mathcal C / \sim $ where the relation is given by: $x \sim y$ iff $x \in c(y)$. 
 
Let us define a function
$$ \psi_n:\Sigma \rightarrow h^{-1}(\phi(-1/n, h(\mathcal C))) \subset \mathbb T^3 $$
be a function given by the Measurable Choice Theorem \ref{theo:MCT} applied to the product $\Sigma\times \mathbb T^3$ and the measurable set we consider is $h^{-1}(\phi(-1/n, h(\mathcal C)))$. We have to use this theorem because we cannot assure that $h^{-1}(\phi(-1/n, h(\mathcal C)))$ intersects the center leaf in one point, it could, of course, intersect on a segment.
 
 But notice that fixed $[x] \in \Sigma$ then $\psi_n([x])$ is an increasing sequence (recall that we have an orientation for the center direction). Therefore we can define the following function
 \begin{eqnarray*}
\psi : \Sigma &\rightarrow & \mathbb T^3\\
\left[x\right] & \mapsto & \lim_{n \rightarrow \infty} \psi_n([x])
 \end{eqnarray*}
and by its construction $\psi([x])$ is the lower extreme of $c(x)$. We now want to prove that the image $Im(\psi)$ is a measurable set. Notice that $\psi$ is measurable function because it is the limit of measurable functions.

By Lusin's theorem there exists a compact set $K_n \subset \Sigma$ such that $\psi|K_n$ is a continuous function and $\hat m(\Sigma \setminus K_n) < 1/n$ where $\hat m :=\pi_* m$ and $\pi:\mathcal C \rightarrow \Sigma$ is the projection. Because $\psi|K_n$ is a continuous function $\psi(K_n)$ is a compact set. Now $\hat m(\Sigma)( \bigcup_{n}\psi(K_n))=\hat m(\Sigma)$, without loss of generality we may consider that $\Sigma = \bigcup_{n}\psi(K_n)$, therefore
$$ \psi(\Sigma) = \bigcup_{n}\psi(K_n)  \text{ is a measurable set.} $$

We have proven so far that the base of the intervals from $\mathcal C$ forms a measurable set and we call these sets as point zero, that is if $x \in \mathcal C$ then $0_x$ means the base point associated to the segment $c(x)$. Let us denote the set of these `zero" points as $\Sigma_0$. Observe that $\Sigma_0$ may be identified with $\Sigma$ and equipped with the quotient measure $\hat{m}.$ If $y \in c(x)$ then $[0_x,y]$ stands for the segment inside the center leaf which contains $0_x$ and $y$.

 Let $W_{\epsilon} := \{\phi(t,0_y) \; | \; 0 \leq t \leq \epsilon, \; y \in \mathcal C \}$. Since $[0,\epsilon]\times \Sigma_0$ is a Souslin set and the flow $\phi$ is a continuous map, the set $W_{\epsilon}$ is a measurable set by \cite[Proposition 1.10.8]{bogachevI}. Define
\begin{eqnarray*}
 \mu_{\epsilon} : \Sigma_0 &\rightarrow& [0, 1] \subset \mathbb R\\
\;  [y] &\mapsto& m_y(W_{\epsilon}),
\end{eqnarray*}
this is a measurable function by Rokhlin's Theorem \ref{teo:rokhlin}. Hence, \[\mu_{\epsilon}^{-1}([0,\alpha]) = \{0_y \; | \; m_y(W_{\epsilon}) \leq \alpha \}\] is a measurable set.\\
We now consider the only three possible cases:
 
 \begin{enumerate}

 \item  for all $\epsilon >0$ and all $\alpha \in (0,1]$,  $ \widehat m(\mu_{\epsilon}^{-1}((0,\alpha]))=0$;
  \item  there exists $\epsilon >0$ and there exists $\alpha \in (0,1)$ such that $\widehat m(\mu_{\epsilon}^{-1}((0,\alpha]))>0$;
  
  \item  there exists $\epsilon>0$ such that $\widehat m(\mu_{\epsilon}^{-1}((0,1]))>0$ and  $\widehat m(\mu_{\epsilon}^{-1}((0,\alpha]))=0$ for all $\alpha<1$.

\end{enumerate}

Assume we are in the first case. Observe that since the elements $c(x) \in \mathscr C$ have uniformly bounded length, for large enough $\epsilon>0$  we have $ \widehat m(\mu_{\epsilon}^{-1}([0,1]))=1$. Thus, for a large enough $\epsilon>0$ we must have
\[\widehat m(\mu_{\epsilon}^{-1}(\{0\}))=1.\]
But this contradicts the fact that $m(\mathcal C)=1$. Thus the first case can not occur.\\

Now, assume we are in the second case. By the definition of $\mu_{\epsilon}$ we have that
 \[0<\widehat m(\mu_{\epsilon}^{-1}((0,\alpha])) \Rightarrow m(Q(\epsilon,\alpha)) 
  \in (0,\alpha],\]
  where $Q(\epsilon,\alpha):=\phi([0,\epsilon]\times \mu_{\epsilon}^{-1}((0,\alpha]))$.
 Take the union of the iterates of $Q(\epsilon, \alpha)$, that is
 \[ Q:=\bigcup_{i \in \mathbb Z} f^{i}( Q(\epsilon,\alpha) ),\]
 which is an $f$-invariant set.
 Since $f_{*}m_y = m_{f(y)}$ then, for each $y \in Q$ we have 
 \[c(y) \cap Q \subset \{z: m_{y}([0_y,z]) \leq \alpha \},\]
 which implies $0<m(Q)\leq \alpha <1$. This contradicts the ergodicity of $f$, thus the second case can not occur as well.\\

 At last assume we are in the third case.  In this case we have
 \[\bigcup_{n\in \mathbb N}\widehat m(\mu_{\epsilon}^{-1}((0,1-1/n]))=0 \Rightarrow \widehat m(\mu_{\epsilon}^{-1}(\{1\}))=1.\]
 Let $\epsilon_0 := \inf \{\epsilon :   \widehat m(\mu_{\epsilon}^{-1}(\{1\}))=1 \}.$ Then, $ \widehat m(\mu_{\epsilon_0+1/n}^{-1}(\{1\}))=1$ for any $n \in \mathbb N$, which implies

\[m(Q(n)) = 1,\]
where $Q(n):=\phi([0,\epsilon_0+1/n]\times \mu_{\epsilon_0+1/n}^{-1}(\{1\})  )$. Take $Q = \bigcap Q(n)$. We must have
\[m(Q) = 1.\]

But by the definition of $Q(n)$ and $Q$ we have $Q = \phi([0,\epsilon_0]\times \mu_{\epsilon_0}^{-1}(\{1\}))$. Thus, by definition of $\epsilon_0$ we have
$m(\phi(\epsilon_0,\mu_{\epsilon_0}^{-1}(\{1\}))) = m(Q) = 1,$
that is, the set of points $\Theta=\phi(\epsilon_0,\mu_{\epsilon_0}^{-1}(\{1\}))$ is a set of atoms as we wanted to show.

Thus, the disintegration is indeed atomic, and there exists exactly one atom per leaf. 

\end{proof}

\section{Proof of Theorem \ref{theo:main}}
Let $f\colon \mathbb T^3 \rightarrow \mathbb T^3$ be a $C^{2}$ volume preserving Kolmogorov partially hyperbolic diffeomorphism homotopic to a linear Anosov diffeomorphism $A\colon \mathbb T^3 \rightarrow \mathbb T^3$, as in the hypothesis of Theorem \ref{theo:main}.

Without loss of generality we can assume that the center Lyapunov exponent $\lambda^c_A$ of $A$ is negative and $\mathcal F_f^{cs}$ is absolutely continuous: $\lambda^c_A<0$ (otherwise we work with $f^{-1}$ which is homotopic to $A^{-1}$).

We use the notation $c(x)$ and $\mathcal C$ as introduced on \S \ref{sec:semi-conjugacy}. From the ergodicity of $f$ it follows that $\mathcal C$ has either full or zero volume.

In our case the existence of a center foliation requires a fine comprehension of the disintegration of volume on the center foliation. The Theorem \ref{theo:main.B} is the key to overcome this issue.

\subsection{Partition by rectangles}

Take $\mathcal E$ to be the partition of $\mathbb T^3$ by points. Let $\alpha$ be a partition of $\mathbb T^3$ by  measurable sets such that the boundary of any element in $\alpha$ is piecewise smooth and such that each atom $D \in \alpha$ is an open set with boundary of zero measure. It is easy to construct such a partition on the $3$-torus. We will prove that $\alpha$ is VWB. Then we will take a sequence of such partitions $\alpha_n$ with: $\alpha_1 \leq \alpha_2 \leq ... $ such that $\alpha_n \rightarrow \mathcal E$, concluding that $f$ is indeed Bernoulli by Theorems \ref{O70} and \ref{O702}.\\

In order to do so, 
we shall consider some specific partitions with dynamical meaning which will help us to control the behavior of $f^i\alpha$, $i \in \mathbb N$.\\

Given two points $y$ and $z$ close enough to each other, we know that $\mathcal F^{cs}(y)$ and $\mathcal F^u(z)$ will intersect each other and that the intersection is, locally, a single point. We denote this point by $[y, z]$. Sometimes along this section we also write $W^{cs}(y) \cap W^u(z)$ to mean the point $[y,z]$. 

\begin{definition}
A measurable set $\Pi$ is called a $\delta$-rectangle at a point $w$ if $\Pi \subset B(w,\delta)$ and for any $y,z \in \Pi$ the local intersection belongs to $\Pi$, that is
\[ [y,z] \in \Pi .\]
\end{definition}

The definition of rectangle here and in \cite{YP3} is basically the same, the only difference being that in Pesin's case the measure is hyperbolic, so he works with stable foliation and we replace it by center-stable foliation. Also, since Pesin deals with non-hyperbolicity, he needs to work inside regular level sets (see \cite{BP} for definition), while do not need to do such restrictions here since we are working with absolutely partially hyperbolic diffeomorphisms.\\

Note that, by the local product structure of the rectangles, we can think of a rectangle $\Pi$ as a cartesian product of $\mathcal F^u_x \cap \Pi$ and $\mathcal F^{cs}_z \cap \Pi$, where $x,z \in \Pi$.
Let $f: M \rightarrow M$ be a partially hyperbolic diffeomorphism with absolutely continuous center-stable foliation. Then we can take, for a typical $z$, $m^u_z$ the measure $m$ conditioned on $\mathcal F^u_z \cap \Pi$ and $m^{cs}_f$ the factor measure on the leaf $\mathcal F^{cs}_z$, that is, as in Definition \ref{definition:conditionalmeasure} the partition $\mathcal P = \{ \mathcal F^u_z\cap \Pi\}_{z\in \Pi}$, $\{m^u_z\}$ is a system of conditional measures and $m^{cs}_f$ is the factor measure. From the absolute continuity of the unstable and center-stable foliations it follows that for a typical $z$ the product measure 
\[m^P_R:= m^u_z \times m^{cs}_f,\]
which is defined on $\Pi$, satisfies 
\[m^P_R << m .\]

The following definition of $\varepsilon$-regular covering was given by N. Chernov and C. Haskell \cite{CH} for the case of non-uniformly hyperbolic maps and flows. The definition for partially hyperbolic diffeomorphisms is the same replacing the stable by the center-stable manifold.

\begin{definition}\label{defi:regular}
Given any $\varepsilon >0$, an $\varepsilon$-regular covering of $M$ is a finite collection of disjoint rectangles $\mathcal R=\mathcal R_{\varepsilon}$ such that:
\begin{enumerate}
\item $m(\bigcup_{R\in \mathcal R} R) > 1-\varepsilon$
\item For every $R \in \mathcal R$ we have
\[ \left| \frac{m^P_R(R)}{m(R)} -1  \right| < \varepsilon  \]

and, moreover, $R$ contains a subset, $G$, with $m(G) > (1-\varepsilon)m(R)$ which has the property that for all points in $G$,
\[\left|  \frac{dm^P_R}{dm} -1  \right| < \varepsilon.   \]
\end{enumerate}
\end{definition}

\begin{lemma}\label{lema: partition}
Given any $\delta>0$ and any $\varepsilon >0$, there exist an $\varepsilon$-regular covering of connected rectangles $\mathcal R_{\varepsilon}$ of $M$ with $\operatorname{diam}(R) < \delta$, for every $R \in \mathcal R_{\varepsilon}$.
\end{lemma}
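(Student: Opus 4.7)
The plan is to build the covering from the local product structure provided by the strong foliations $\mathcal F^u$ and $\mathcal F^{cs}$, exploiting the absolute continuity of both (classical for $\mathcal F^u$ in the partially hyperbolic setting, assumed for $\mathcal F^{cs}$). By compactness of $\mathbb T^3$, I would first fix a finite cover by small product charts $\Pi_1,\ldots,\Pi_N$, each of diameter less than $\delta$, of the form $\Pi_i=\{[z,\xi]:z\in T_i,\ \xi\in S_i\}$, where $T_i$ is a local transversal to $\mathcal F^u$ sitting inside a center-stable plaque and $S_i$ is an unstable plaque. Inside each chart, the disintegration of $m$ along the unstable plaques has the form $m|_{\Pi_i}=\int_{T_i} m^u_z\,d\hat m(z)$ with $\hat m$ the quotient measure on $T_i$; absolute continuity of $\mathcal F^u$ ensures that each $m^u_z$ is equivalent to Lebesgue on the local unstable plaque, with density bounded above and below locally.

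The key estimate is that for any sub-rectangle $R\subset\Pi_i$ and reference point $z_0\in T_i\cap R$, the ratio $dm^P_R/dm$ at a point $x\in R$ equals the Jacobian of the unstable holonomy from the center-stable leaf through $z_0$ to the one through the $\mathcal F^{cs}$-footprint of $x$. Absolute continuity of this holonomy with continuous Jacobian, together with the normalization at the diagonal, gives $dm^P_R/dm\to 1$ uniformly as $\operatorname{diam}(R)\to 0$. Hence, after partitioning each $T_i$ into sufficiently small Borel pieces $B_{i,j}$ with piecewise smooth boundary, the candidate rectangles $R_{i,j}:=\bigcup_{z\in B_{i,j}}\mathcal F^u_{\mathrm{loc}}(z)\cap\Pi_i$ satisfy both $|m^P_{R_{i,j}}(R_{i,j})/m(R_{i,j})-1|<\varepsilon$ and, on a subset $G_{i,j}$ of relative measure at least $1-\varepsilon$, the pointwise bound $|dm^P_{R_{i,j}}/dm-1|<\varepsilon$.

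To produce an honest covering I would then extract a disjoint subfamily of the $\{R_{i,j}\}$ via a greedy Vitali-type selection: process the charts in a fixed order, and retain a candidate only if it is disjoint from every previously chosen rectangle. If the partitions of the $T_i$ are refined enough, the portion of $\mathbb T^3$ swept out by the discarded rectangles has measure less than $\varepsilon$, which yields condition (1) of Definition \ref{defi:regular}; inner regularity of $m$ together with the fact that distinct charts overlap only in thin transitional regions handles the bookkeeping. Connectedness of each selected rectangle can be arranged by choosing the $B_{i,j}$ connected inside $T_i$.

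The main obstacle I would expect is the simultaneous control required in condition (2) of Definition \ref{defi:regular}: both the total-mass ratio and the pointwise density have to be $\varepsilon$-close to $1$ on a large subset of \emph{every} selected rectangle. This is delicate because the density estimate forces one to track the holonomy Jacobian uniformly throughout $R_{i,j}$, not merely in an average sense; absolute continuity of $\mathcal F^{cs}$ is precisely what makes the quotient measure $\hat m$ well enough behaved for the Fubini-style argument connecting the fiberwise estimate to the product measure estimate to close up. Dropping that hypothesis would break the construction at this step.
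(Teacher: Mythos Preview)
Your proposal is correct and aligns with the paper's approach. The paper itself does not give an independent argument: it simply observes that Chernov--Haskell's proof of the analogous statement (Lemma~5.1 in \cite{CH}) uses only measurable dependence of the plaques on the base point, transversality of the two foliations, and absolute continuity of each---never the actual contraction or expansion along them---and that all three properties hold here for the pair $(\mathcal F^{cs},\mathcal F^u)$, so the same construction goes through verbatim with ``stable'' replaced by ``center-stable.'' What you have written is essentially a sketch of that Chernov--Haskell construction, so there is nothing to compare: you have supplied the details the paper chose to omit by citation.

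One minor point worth tightening: in your key estimate you invoke a \emph{uniform} limit $dm^P_R/dm\to 1$ as $\operatorname{diam}(R)\to 0$, appealing to continuity of the holonomy Jacobian. In the present $C^2$ uniformly partially hyperbolic setting the unstable holonomy Jacobian is indeed H\"older and this is justified, but the definition of $\varepsilon$-regular covering only asks for the pointwise bound on a subset $G$ of relative measure $>1-\varepsilon$, which is all Chernov--Haskell obtain (via Lusin-type arguments) from bare absolute continuity. Your later paragraph already reverts to the weaker ``on a subset $G_{i,j}$'' formulation, so the argument is fine; just be consistent about which version you are claiming.
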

\begin{proof}
As remarked by Chernov-Haskell \cite{CH}, in the non-uniformly hyperbolic case the expansion and contraction of the unstable and stable manifolds are not used to prove the existence of an $\varepsilon$-regular covering. The only properties used in the proof are the measurable dependence on $x \in M$, transversality between the unstable and stable foliations and the absolutely continuity property.

By our hypothesis, all these properties are satisfied for the pair of foliations $\mathcal F^{cs}$ and $\mathcal F^{u}$. Now the proof of this Lemma follows the same lines as the proof of Lemma $5.1$ from \cite{CH} just replacing stable by center-stable foliation.
\end{proof}

\begin{definition}
We say that a measurable set $A$ intersects a rectangle $\Pi$, \textit{leafwise} if
\[\mathcal F^u(w) \cap \Pi \subset A \cap \Pi, \text{ for any } w \in A\cap \Pi.\]
\end{definition}

\begin{lemma}\label{lemma:sub}
If $E$ is a set intersecting a rectangle $\Pi$ leafwise then the intersection $E \cap \Pi$ is a rectangle.
\end{lemma}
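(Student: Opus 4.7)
The proof should be essentially a direct unpacking of the definitions, so my plan is short. First I would note that since $\Pi \subset B(w,\delta)$, automatically $E\cap\Pi \subset B(w,\delta)$, so the diameter condition in the definition of a $\delta$-rectangle is inherited for free. Thus the only non-trivial point is to verify closure of $E\cap\Pi$ under the local bracket $[\,\cdot\,,\,\cdot\,]$.

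The key step is as follows. Pick any $y,z \in E\cap\Pi$. Since $y,z\in\Pi$ and $\Pi$ itself is a rectangle, we already know $[y,z] = \mathcal F^{cs}(y)\cap \mathcal F^u(z) \in \Pi$. By the very construction of the bracket, $[y,z]$ lies on the unstable leaf $\mathcal F^u(z)$. Now invoke the leafwise hypothesis at the point $z\in E\cap\Pi$: this gives $\mathcal F^u(z)\cap \Pi \subset E\cap\Pi$. Since $[y,z]\in \mathcal F^u(z)\cap\Pi$, we conclude $[y,z]\in E\cap\Pi$, which is exactly the rectangle condition for $E\cap\Pi$.

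There is no real obstacle here; the lemma is almost tautological once the definitions are lined up. The only subtle point is to apply the leafwise hypothesis to the correct endpoint: it must be $z$ (not $y$), because the local intersection $[y,z]$ sits on the unstable leaf through $z$, while only on the center-stable leaf through $y$, and the leafwise property is stated in terms of the strong unstable leaf.
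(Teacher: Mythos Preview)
Your proof is correct and is essentially identical to the paper's own argument: pick two points in $E\cap\Pi$, use that $\Pi$ is a rectangle to place the bracket in $\Pi$, and then apply the leafwise hypothesis to the point whose unstable leaf contains the bracket. The paper's proof just uses different variable names (it writes $x,y\in E\cap\Pi$ and $z=W^u(x)\cap W^{cs}(y)$, applying leafwise at $x$), and omits the trivial remark about the diameter condition.
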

\begin{proof}
Let $x,y \in E \cap \Pi$ and set $z = W^u(x) \cap W^{cs}(y)$. Since $x,y \in \Pi$ and $\Pi$ is a rectangle we have $z \in \Pi$. Now, because the intersection is leafwise we have
\[z \in \mathcal F^u(x) \cap \Pi \subset \mathcal F^u(x) \cap E.\]
\end{proof}

\begin{lemma} \label{lemma: u-complete}
The set $\mathbb T^3 \setminus \mathcal C$ is u-saturated.
In particular, given any rectangle $\Pi$, if $X = \mathbb T^3 \setminus \mathcal C$ then $X$ intersects $\Pi$ leafwise.
\end{lemma}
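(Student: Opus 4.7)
I would reduce the lemma to the single claim that $\mathcal{C}$ itself is $u$-saturated; the ``in particular'' clause is then automatic, since for $w \in (\mathbb{T}^3 \setminus \mathcal{C}) \cap \Pi$ one would have $\mathcal{F}^u(w) \subset \mathbb{T}^3 \setminus \mathcal{C}$ and hence $\mathcal{F}^u(w) \cap \Pi \subset X \cap \Pi$, which is exactly the definition of leafwise intersection.

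The first preparatory step would be to verify the (standard) fact that $h$ sends strong unstable leaves of $f$ into strong unstable leaves of $A$. If $z \in \mathcal{F}^u_f(x)$, then $d(f^{-n}(x), f^{-n}(z)) \to 0$ as $n \to \infty$, and the semi-conjugacy relation $h \circ f^{-n} = A^{-n} \circ h$ together with uniform continuity of $h$ on the compact torus give $d(A^{-n}(h(x)), A^{-n}(h(z))) \to 0$. Since $A$ is linear hyperbolic, only vectors lying in the unstable eigenspace are contracted under backward iteration; this forces $h(z) \in \mathcal{F}^u_A(h(x))$.

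The second preparatory step is to show that, for every $y \in \mathbb{T}^3$, the restriction $h|_{\mathcal{F}^u_f(y)} : \mathcal{F}^u_f(y) \to \mathcal{F}^u_A(h(y))$ is surjective. Passing to the universal cover, one uses $\|\tilde{h} - \mathrm{Id}\| \leq K$ together with the quasi-isometric property of $\tilde{\mathcal{F}}^u_f$ (automatic on $\mathbb{T}^3$ by Brin-Burago-Ivanov and Hammerlindl): any sequence escaping to infinity along $\tilde{\mathcal{F}}^u_f(\tilde{y})$ has ambient norm tending to infinity, and because $\tilde{h}$ is a bounded perturbation of the identity, its image must escape to infinity along the line $\tilde{\mathcal{F}}^u_A(\tilde{h}(\tilde{y}))$. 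Thus $\tilde{h}$ restricted to the lifted unstable leaf is a proper continuous map between copies of $\mathbb{R}$, and hence surjective.

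To conclude, take $x \in \mathcal{C}$, so that by definition there exists $y \neq x$ with $h(y) = h(x)$, and by Ures's theorem $y \in \mathcal{F}^c_f(x)$. Given any $z \in \mathcal{F}^u_f(x)$, we have $h(z) \in \mathcal{F}^u_A(h(x)) = \mathcal{F}^u_A(h(y))$, so the leafwise surjectivity yields some $z' \in \mathcal{F}^u_f(y)$ with $h(z') = h(z)$. Since $y \in \mathcal{F}^c_f(x) \setminus \{x\}$ and the center and unstable foliations are transverse, the leaves $\mathcal{F}^u_f(x)$ and $\mathcal{F}^u_f(y)$ are distinct, hence disjoint, forcing $z \neq z'$; therefore $c(z) \neq \{z\}$ and $z \in \mathcal{C}$, completing the argument. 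The only genuinely nontrivial ingredient in this plan is the leafwise surjectivity of $h$, which is precisely where the one-dimensionality of $E^u$ and the quasi-isometric behavior of $\tilde{\mathcal{F}}^u_f$ come in.
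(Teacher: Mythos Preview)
Your argument is correct and takes a different route from the paper. Both start from the fact that $h$ sends strong unstable leaves of $f$ to unstable leaves of $A$, but the paper then works \emph{locally}: given $x \notin \mathcal{C}$ and $y \in \mathcal{F}^u(x) \cap \mathcal{C}$, it picks $w \in c(y) \setminus \{y\}$ close to $y$ and uses the unstable holonomy inside the center-unstable leaf to transport $w$ to a point $z \in \mathcal{F}^c(x)$ near $x$; since $h(z)$ then lies in $\mathcal{F}^u_A(h(x)) \cap \mathcal{F}^c_A(h(x))$, which locally reduces to $\{h(x)\}$, one obtains $h(z) = h(x)$ with $z \neq x$, a contradiction. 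You instead work \emph{globally}, proving that $h|_{\mathcal{F}^u_f(y)}$ is surjective onto $\mathcal{F}^u_A(h(y))$ and pulling back $h(z)$ along $\mathcal{F}^u_f(y)$. The paper thus avoids your surjectivity step at the cost of invoking the product structure of $\mathcal{F}^c$ and $\mathcal{F}^u$ inside $\mathcal{F}^{cu}$; your approach trades this for the quasi-isometry of $\tilde{\mathcal{F}}^u_f$, which makes the higher-dimensional generalization (where quasi-isometry is an explicit hypothesis) more transparent.

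One small point: ``transversality'' alone does not justify $y \notin \mathcal{F}^u_f(x)$ on the torus, since transverse foliations can have leaves meeting repeatedly. The fix is immediate with your own tools: $h$ is also \emph{injective} on each unstable leaf (forward iterates of distinct points on $\tilde{\mathcal{F}}^u_f$ diverge in leaf distance, hence by quasi-isometry in ambient distance, so \eqref{eq:h} forbids equal $\tilde{h}$-images), and $h(y) = h(x)$ with $y \neq x$ then forces $y \notin \mathcal{F}^u_f(x)$ directly.
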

\begin{proof}
Let $x \in \mathbb T^3 \setminus \mathcal C$ and assume that we can find $ y\in \mathcal F^u(x) \cap \mathcal C$. Then, there exists a closed segment $\gamma \subset \mathcal F^c(y)$ with $y\in \gamma$ and $\gamma \subset \mathcal C$. Take any $w \in \gamma \setminus \{y\}$ and consider $z = W^u(w) \cap W^c(x)$. 

Since we assumed that the center Lyapunov exponent of the linearization $A$ of $f$ is negative, the semi-conjugacy $h$ sends strong unstable leaves of $f$ to unstable leaves of $A$. Note that stable leaves of $f$ are not necessarily mapped to stable leaves of $A$.

If $h(y) = h(w)$, then 
 \[ \mathcal F^u(h(x))=\mathcal F^u(h(y)) = \mathcal F^u(h(w)) = \mathcal F^u(h(z)).\]
Thus we conclude that $h(z) \in \mathcal F^u(h(x)) \cap \mathcal F^c(h(x))$. Since $W^u(h(x)) \cap W^c(h(x)) = \{h(x)\}$ we can take $z$ close enough to $y$ and then we have $h(x) = h(z)$, contradicting the hypothesis that $x \notin \mathcal C$.
\end{proof}

\begin{lemma} \label{lemma: leafwise}
Given a rectangle $\Pi$ and $\beta > 0$, one can find $N_1>0$ such that for any $N' \geq N \geq N_1$ and $\beta$-almost every element $A \in \bigvee_{N}^{N'}f^k \alpha$, there exists a subset $E \subset A$, intersecting $\Pi$ leafwise, for which:
\[ \frac{m(E)}{m(A)} \geq 1-\beta .\]
\end{lemma}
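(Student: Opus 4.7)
The plan is to package the obstruction to the leafwise property into a single set $B_{N,N'}$ independent of the individual atom $A$, and then to distribute it across atoms by Markov's inequality. Explicitly, I set
\[
B_{N,N'} \;=\; \bigcup_{k=N}^{N'} B_k, \qquad B_k = \bigl\{\, x\in \Pi : f^{-k}(\mathcal F^u(x)\cap \Pi)\cap \partial\alpha \neq \emptyset \,\bigr\}.
\]
Two points $x,y\in \Pi$ on the same local unstable leaf lie in a common atom of $\bigvee_{k=N}^{N'} f^k\alpha$ precisely when their joining unstable arc has the property that each of its $f^{-k}$-images, $N\le k\le N'$, avoids $\partial\alpha$. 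Therefore, given an atom $A$ of the join, I will take $E := A\setminus B_{N,N'}$. For any $w\in E\cap \Pi$ the entire arc $\mathcal F^u(w)\cap \Pi$ lies in $A$; moreover, membership in $B_{N,N'}$ is a property of the arc $\mathcal F^u(\cdot)\cap \Pi$ itself (identical for all its points), so the whole arc lies in $E\cap \Pi$. Hence $E$ intersects $\Pi$ leafwise by construction.

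The crux is the estimate $m(B_{N,N'})\leq C\lambda^{-N}$, where $\lambda>1$ is a uniform lower bound for the expansion of $Df$ on $E^u$ and $C$ depends only on $\Pi$ and $\alpha$. By $f$-invariance of $m$, $m(B_k)=m(f^{-k}(B_k))$, and $f^{-k}(B_k)$ consists of points $z\in f^{-k}(\Pi)$ whose unstable arc inside $f^{-k}(\Pi)$ meets $\partial\alpha$. Since $f^{-k}$ contracts $E^u$ by at least $\lambda^{-k}$, these arcs have unstable length at most $c\lambda^{-k}$, so $f^{-k}(B_k)$ sits in an unstable tube of radius $c\lambda^{-k}$ around $\partial\alpha$. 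Absolute continuity of $\mathcal F^u$, together with the piecewise smooth character of $\partial\alpha$ and its generic transversality to $\mathcal F^u$ (arranged from the outset when choosing the partitions $\alpha_n$), then gives $m(f^{-k}(B_k))\leq C'\lambda^{-k}$, and summing the geometric series yields the claimed bound.

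With this in hand, Markov's inequality applied to the atoms of $\bigvee_{k=N}^{N'} f^k\alpha$ shows that the total $m$-mass of atoms $A$ with $m(A\cap B_{N,N'})>\beta\, m(A)$ is at most $m(B_{N,N'})/\beta \leq C\lambda^{-N}/\beta$. Choosing $N_1$ so that $C\lambda^{-N_1}\leq \beta^2$ makes this at most $\beta$, and then for every $N'\ge N\ge N_1$ and for $\beta$-almost every atom $A$, the set $E = A\setminus B_{N,N'}$ satisfies both $m(E)/m(A)\ge 1-\beta$ and the leafwise property, finishing the lemma. The step I expect to be the main obstacle is the tubular estimate on $m(f^{-k}(B_k))$: it is sensitive to the relative position of $\partial\alpha$ and $\mathcal F^u$ and requires that $\partial\alpha$ have no $\mathcal F^u$-tangencies on a set of positive $2$-dimensional area. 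Once this transversality is built into the construction of $\alpha_n$, the rest reduces to the absolute continuity of $\mathcal F^u$ and the $f$-invariance of $m$.
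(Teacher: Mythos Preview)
Your argument is correct and is precisely the standard Pesin/Ornstein--Weiss argument the paper defers to: bound $m(B_k)$ geometrically using the uniform contraction of $f^{-k}$ along $\mathcal F^u$, sum the series, and distribute over atoms via Markov's inequality. Your worry about transversality is unnecessary, since unstable distance dominates Riemannian distance and hence $f^{-k}(B_k)$ already lies in the ordinary $c\lambda^{-k}$-neighborhood of $\partial\alpha$, whose $m$-measure is $\le D_0\,c\lambda^{-k}$ by piecewise smoothness alone---no absolute continuity of $\mathcal F^u$ or transversality of $\partial\alpha$ to $E^u$ is needed at this step.
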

\begin{proof}
The proof of this fact in our setting is exactly the same proof as for the nonuniformly hyperbolic case. Therefore we refer the reader to \cite{YP3} for a detailed demonstration. We point out that the proof does not need contraction of $\mathcal F^{cs}$, but it depends only on the expansion of $\mathcal F^u$.
\end{proof}

\subsection{Construction of the function $\theta$}

We now proceed to the most important part of the proof: the construction of the function $\theta$ satisfying the hypothesis of Lemma \ref{lema: bernoulli}. Such function $\theta$ should have the property that for most of the points $x$, the orbits of $x$ and of $\theta(x)$ have almost the same information (asymptotically) with respect to the partition in question. In particular, if we get good control on the distance $d(f^n(x), f^n(\theta(x)))$ we can expect that for most points $x$, $f^n(x)$ and $f^n(\theta(x))$ belong to the same partition element; this will be much clearer below. In order to get this control, we need to restrict to a compact set (where $h$ is uniformly continuous) and then use the fact that the center manifold of $A$ is uniformly contracting and $h$ sends center manifolds to center manifolds. This is what we do in the next lemma.

\begin{lemma} \label{lemma: distancia}
Given any $\varepsilon >0$ and any compact set $K$ in the essential injectivity domain of $h$ (see definition \ref{defi:X}) there exists 
$n_0 \in \mathbb N$ such that
for any two points $a \in K$, $b \in \mathcal F^{cs}(a) \cap K$, with $d(a,b) < \frac{1}{2}$ we have 
\[ d(f^n(a),f^n(b)) < \varepsilon, \text{ whenever } f^n(a),f^n(b) \in K \text{ and } n\geq n_0.\]
\end{lemma}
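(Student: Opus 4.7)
The plan is to exploit the negativity of $\lambda^c_A$, which makes the linear map $A$ uniformly contracting on its two-dimensional center-stable bundle $E^{cs}_A = E^s_A \oplus E^c_A$, and then pull this contraction back to $f$ via uniform continuity of $(h|_K)^{-1}$. As a setup, since $K$ lies in the essential injectivity domain (Definition \ref{defi:X}), the restriction $h|_K$ is a continuous injection from the compact set $K$ into $\mathbb{T}^3$, hence a homeomorphism onto the compact set $h(K)$; therefore $(h|_K)^{-1}\colon h(K)\to K$ is uniformly continuous.

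The key geometric step is to show, on the universal cover with close lifts $\tilde a,\tilde b$ of $a,b$, that $\tilde h(\tilde b) - \tilde h(\tilde a) \in E^{cs}_A$. Using the local sub-foliation of $\mathcal F^{cs}$ by $\mathcal F^s$ and $\mathcal F^c$, write $b \in \mathcal F^s_f(a')$ with $a' \in \mathcal F^c_f(a)$ close to $a$. By Ures's theorem the semiconjugacy sends center leaves of $f$ to center leaves of $A$, which on close lifts yields $\tilde h(\tilde a') - \tilde h(\tilde a) \in E^c_A$. For the stable pair $(a', b)$, $\|\tilde f^n(\tilde a') - \tilde f^n(\tilde b)\|\to 0$ exponentially; since $\tilde h - \mathrm{id}$ is $\mathbb Z^3$-periodic and hence uniformly continuous, $\|\tilde A^n\bigl(\tilde h(\tilde a') - \tilde h(\tilde b)\bigr)\|\to 0$. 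Decomposing $\tilde h(\tilde b) - \tilde h(\tilde a')$ in $E^s_A\oplus E^c_A\oplus E^u_A$, a non-zero $E^u_A$-component would grow like $\lambda^n$ with $\lambda>1$ under $\tilde A^n$, so it must vanish, giving $\tilde h(\tilde b) - \tilde h(\tilde a') \in E^{cs}_A$. Combining with the center contribution produces $\tilde h(\tilde b) - \tilde h(\tilde a) \in E^{cs}_A$.

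The contraction is then quantitative and uniform: from $\|\tilde h - \mathrm{id}\|_\infty \le K$ and $d(\tilde a, \tilde b)<1/2$ we have $\|\tilde h(\tilde a) - \tilde h(\tilde b)\| \le 1/2 + 2K$, and since $A|_{E^{cs}_A}$ has spectral radius $\mu<1$,
\[
d\bigl(A^n h(a),\,A^n h(b)\bigr) \;\le\; \bigl\|\tilde A^n\bigl(\tilde h(\tilde a) - \tilde h(\tilde b)\bigr)\bigr\| \;\le\; C\mu^n\bigl(\tfrac{1}{2}+2K\bigr),
\]
for a universal constant $C$, uniformly in the pair $(a,b)$. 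Using $A^n\circ h = h\circ f^n$, this gives $d(h(f^n a), h(f^n b)) \to 0$ uniformly. Given $\varepsilon>0$, pick $\delta$ from uniform continuity of $(h|_K)^{-1}$ and then $n_0$ with $C\mu^{n_0}(1/2+2K)<\delta$; whenever $f^n a, f^n b\in K$ and $n\ge n_0$, both images lie in $h(K)$ and the estimate yields $d(f^n a, f^n b)<\varepsilon$. The main obstacle I foresee is establishing the vector identity $\tilde h(\tilde b) - \tilde h(\tilde a) \in E^{cs}_A$ in the universal cover (rather than the weaker statement that $h(a),h(b)$ lie in a common $cs$-leaf on the torus, which allows ambiguity of lifts); the auxiliary decomposition through $a'$ together with Ures's leaf-preservation result is what delivers it.
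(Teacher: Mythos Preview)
Your proof is correct and follows the same strategy as the paper: lift to the universal cover, use that $A$ contracts uniformly on $E^{cs}_A$, and pull the contraction back via uniform continuity of $(h|_K)^{-1}$. You supply the detail (through the auxiliary point $a'$ and Ures's leaf-preservation) that $\tilde h(\tilde b)-\tilde h(\tilde a)\in E^{cs}_A$, which the paper leaves implicit, and you handle the two cases of the essential injectivity domain uniformly rather than splitting them as the paper does.
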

\begin{proof}
We split the proof in two cases.\\

\noindent {First case: $X=\mathbb T^3 \setminus \mathcal C$.}\\

Consider the lifts to the universal cover $\tilde{A}, \tilde{f} : \mathbb R^3 \rightarrow \mathbb R^3$ and $\tilde{h}:\mathbb R^3 \rightarrow \mathbb R^3$ the lift of the conjugacy, such that $\tilde{h}(0)=0$. We know that
\[\tilde{A}^n \circ \tilde{h} = \tilde{h} \circ \tilde{f}^n\]
for all $n$. Thus
\[(e^{\lambda^c_A})^nd(\tilde{h}(a),\tilde{h}(b)) \geq d(\tilde{A}^n \circ \tilde{h} (a), \tilde{A}^n \circ \tilde{h} (b)) = d(\tilde{h} \circ \tilde{f}^n(a),\tilde{h} \circ \tilde{f}^n(b)).\]
Since $h$ is a bounded distance from the identity, then if $d(a,b) < \delta$ we have:
\[d(\tilde{A}^n \circ \tilde{h} (a), \tilde{A}^n \circ \tilde{h} (b)) \leq   (e^{\lambda^c_A})^n \cdot D, \]
for a certain constant $D>0$.
By the uniform continuity of $h^{-1}$ inside $h(K)$ we can take $n_0$ big enough so that $n \geq n_0$ implies:
\[d(f^n(a), f^n(b)) < \varepsilon.\]

\noindent {Second case: $X=  \text{ set of atoms}$.}\\

In this case, we know by Theorem \ref{theo:main.B} that each leaf has only one atom, that is, for almost every $x \in \mathbb T^3$ we have $X \cap \mathcal F^c(x) = \{a_x \}$. Thus, given any two points $a,b \in X$, $a$ and $b$ are not collapsed by $h$ (since they do not belong to the same central leaf). Thus the proof of the first case works for this case as well.
\end{proof}

\begin{lemma} \label{lemma:key}
For any $\delta>0$, there exists $0<\delta_1 < \delta$ with the following property. Let $\Pi$ be a $\delta_1$-rectangle and $E$ a set intersecting $\Pi$ leafwise. Then we can construct a bijective function $\theta: E\cap \Pi \rightarrow \Pi$ such that for every measurable set $F \subset E \cap \Pi$ we have

\[ \frac{m^P_{\Pi}(\theta(F))}{m^P_{\Pi}(\Pi)} = \frac{m^P_{\Pi}(F)}{m^P_{\Pi}(E\cap \Pi)} , \]
and for every $x \in E\cap \Pi$ \[\theta(x) \in \mathcal F^{cs}(x).\]
\end{lemma}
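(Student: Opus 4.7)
The plan is to exploit the local product structure of a small rectangle under the bracket $[\cdot,\cdot]$ and then to transport $E \cap \Pi$ onto $\Pi$ by permuting only the center-stable coordinate, via a measure isomorphism between two standard non-atomic Lebesgue probability spaces. First I would choose $\delta_1<\delta$ small enough that for any pair of points $y,z$ lying in a common ball of radius $2\delta_1$ the local intersection $[y,z]=\mathcal{F}^{cs}_{\mathrm{loc}}(y)\cap\mathcal{F}^u_{\mathrm{loc}}(z)$ is a unique point depending continuously on $(y,z)$, using the uniform transversality of $E^u$ and $E^{cs}$.

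Fix a reference point $z_0\in\Pi$ and set $U_0:=\mathcal{F}^u(z_0)\cap\Pi$, $V_0:=\mathcal{F}^{cs}(z_0)\cap\Pi$. The bracket gives a homeomorphism $(a,b)\in U_0\times V_0\mapsto[a,b]\in\Pi$, with $[a,b]\in\mathcal{F}^u(b)\cap\mathcal{F}^{cs}(a)$; under this identification the unstable leaf of $[a,b]$ inside $\Pi$ corresponds to varying $a$ at fixed $b$, the center-stable leaf of $[a,b]$ corresponds to varying $b$ at fixed $a$, and the product measure from Definition \ref{defi:regular} reads $m^P_\Pi=m^u\otimes m^{cs}_f$. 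Since $E$ intersects $\Pi$ leafwise, Lemma \ref{lemma:sub} implies $E\cap\Pi$ is itself a rectangle and hence has the product form $U_0\times V_E$ for some measurable $V_E\subset V_0$.

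Because $m^{cs}_f$ is a non-atomic Borel probability measure on $V_0$ (being the factor of Lebesgue volume), the normalized restrictions to $V_E$ and to $V_0$ are standard non-atomic Lebesgue probability spaces, hence isomorphic mod zero. Let $\tau_0:V_E\to V_0$ be such an isomorphism, so that
\[
\frac{m^{cs}_f(\tau_0(B))}{m^{cs}_f(V_0)} = \frac{m^{cs}_f(B)}{m^{cs}_f(V_E)}
\]
for every measurable $B\subset V_E$. Define $\theta:E\cap\Pi\to\Pi$ by $\theta([a,b]):=[a,\tau_0(b)]$. Then $\theta$ is a bijection, and because the first coordinate is preserved, $\theta([a,b])\in\mathcal{F}^{cs}(a)=\mathcal{F}^{cs}([a,b])$. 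The measure identity for arbitrary measurable $F\subset E\cap\Pi$ follows by Fubini: writing $F_b=\{a:(a,b)\in F\}$ and using the scaling of $\tau_0$ as a change of variables in the $V_0$-integral, one gets $m^P_\Pi(\theta(F))=(m^{cs}_f(V_0)/m^{cs}_f(V_E))\,m^P_\Pi(F)$, which rearranges to the stated identity.

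The main technical obstacle is not the construction but justifying the measurability facts silently used: that the bracket coordinates make $V_E=\{b\in V_0:\mathcal{F}^u(b)\cap\Pi\subset E\}$ a Borel subset of $V_0$ (which follows from continuity of the bracket and the leafwise hypothesis), and that $m^{cs}_f$ has no atoms (a consequence of absolute continuity of one of the strong foliations together with non-atomicity of $m$). Once these are in place, the classical Lebesgue-space isomorphism theorem supplies $\tau_0$ and the lemma follows.
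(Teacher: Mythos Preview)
Your proof is correct and is essentially the same as the paper's own argument, only expressed in bracket-coordinate language rather than holonomy language: the paper fixes a base leaf $\mathcal F^{cs}(x)\cap\Pi$, builds the measure isomorphism $\theta_0$ there, and extends by $(\pi^u_{y,x})^{-1}\circ\theta_0\circ\pi^u_{y,x}$, which in your coordinates is exactly $[a,b]\mapsto[a,\tau_0(b)]$. Your explicit remarks on why $V_E$ is Borel and why $m^{cs}_f$ is non-atomic are points the paper takes for granted.
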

\begin{proof}

Since $E$ intersects $\Pi$ leafwise by Lemma \ref{lemma:sub} we know that $E \cap \Pi$ is a sub-rectangle, and since the center stable foliation is absolutely continuous the intersection $\mathcal F^{cs}(x) \cap E \cap \Pi$ has positive Lebesgue measure.
Because $\mathcal F^{cs}(x) \cap E \cap \Pi$ and $\mathcal F^{cs}(x) \cap \Pi$ are both probability Lebesgue spaces (with the normalized measures) we can construct a bijection $\theta_0: \mathcal F^{cs}(x) \cap E \cap \Pi \rightarrow \mathcal F^{cs}(x) \cap \Pi$ preserving the normalized measures, that is, for any measurable subset $J \subset \mathcal F^{cs}(x) \cap E \cap \Pi$ we have

\begin{equation}\label{eq:cs}
\frac{m_f^{cs}(\theta_0(J))}{m_{\Pi}^P(\Pi)} = \frac{m_f^{cs}(J)}{m_{\Pi}^P(E\cap \Pi)} . \end{equation}

Now, given any $y \in E\cap \Pi$ we define (using that the intersection is leafwise and inside the rectangle) $\theta(y) \in \Pi$ by
\[\theta(y) := (\pi^u_{y,x})^{-1} \circ \theta_0 \circ \pi^u_{y,x}(y).\]

This $\theta: E \cap \Pi \rightarrow \Pi$ is well defined and $\theta(y) \in \mathcal F^{cs}(y) \cap \Pi$. 
By the definition of $m^P_{\Pi}$, given a measurable set $F \subset E\cap \Pi$ we have
\[m^P_{\Pi}(F) = \int m^{cs}_f(\pi^u_{y,x} (\mathcal F^{cs}(y) \cap F   ) ) dm^u(y) .\]
Thus,
\begin{align*}
m^P_{\Pi}(\theta(F)) & = \int m^{cs}_f(\pi^u_{y,x} (\mathcal F^{cs}(y) \cap \theta(F))) dm^u(y) \\
& =  \int m^{cs}_f(\pi^u_{y,x} \circ \theta(\mathcal F^{cs}(y) \cap F)) dm^u(y) \\
& =  \int m^{cs}_f(\theta_0 \circ \pi^u_{y,x}(\mathcal F^{cs}(y) \cap F)) dm^u(y).
 \end{align*}
Substituting \eqref{eq:cs} we have
\begin{align*}
m^P_{\Pi}(\theta(F)) & = \int m^{cs}_f(\theta_0 \circ \pi^u_{y,x}(\mathcal F^{cs}(y) \cap F)) dm^u(y)\\
& = \frac{m_{\Pi}^P(\Pi)}{m_{\Pi}^P(E\cap \Pi)} \int m^{cs}_f( \pi^u_{y,x}(\mathcal F^{cs}(y) \cap F)) dm^u(y) \\
& =  \frac{m_{\Pi}^P(\Pi)}{m_{\Pi}^P(E\cap \Pi)} \cdot m^P_{\Pi}(F)
 \end{align*}
 as we wanted to show.
\end{proof}

The following Lemma together with Theorems \ref{O70} and \ref{O702} conclude the proof of Theorem \ref{theo:main}.

\begin{lemma} \label{lema:final}
Let $\alpha$ be a finite partition with the property that each atom of $\alpha$ has piecewise smooth boundary. Then $\alpha$ is VWB.
\end{lemma}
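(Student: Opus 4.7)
The plan is to apply Lemma \ref{lema: bernoulli}. Given $\varepsilon>0$ and a typical atom $A\in\bigvee_N^{N'} f^k\alpha$, I will construct an $\varepsilon$-measure preserving bijection $\theta$ from a large subset of $A$ onto a large subset of $\mathbb T^3$ such that $f^i(x)$ and $f^i(\theta(x))$ share the same atom of $\alpha$ for all but a density-$\varepsilon$ set of indices $i\in\{1,\dots,n\}$. Because $\partial\alpha$ is piecewise smooth, the volume of its $\varepsilon'$-neighborhood is $O(\varepsilon')$, so the name-agreement condition reduces to a proximity statement: $d(f^i(x),f^i(\theta(x)))<\varepsilon'$ forces $l_i(x)=m_i(\theta(x))$ except when $f^i(x)$ falls in a thin tube around $\partial\alpha$, whose visit-frequency is controlled by the Birkhoff ergodic theorem.

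Fix a compact $K$ inside the essential injectivity domain $X$ of $h$ with $m(K)>1-\varepsilon^2$, and let $n_0$ be supplied by Lemma \ref{lemma: distancia} for the threshold $\varepsilon'>0$, which is chosen so that the $\varepsilon'$-neighborhood of $\partial\alpha$ has measure less than $\varepsilon/3$. Birkhoff produces $n_1$ such that for $n\geq n_1$ the set of points whose first $n$ forward iterates spend a fraction $\geq 1-\varepsilon$ of the time in $K\setminus B_{\varepsilon'}(\partial\alpha)$ has measure $\geq 1-\varepsilon$. Using Lemma \ref{lema: partition}, select an $\varepsilon$-regular covering $\mathcal R$ by connected rectangles of diameter less than $\min(\delta_1,1/4)$, with $\delta_1$ as in Lemma \ref{lemma:key}. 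Take $N\geq\max(n_0,n_1)$ and a typical atom $A$: for each $\Pi\in\mathcal R$, Lemma \ref{lemma: u-complete} ensures that $X$ intersects $\Pi$ leafwise; Lemma \ref{lemma: leafwise} then produces $E_\Pi\subset A\cap\Pi$ intersecting $\Pi$ leafwise with $m(E_\Pi)\geq(1-\varepsilon)\,m(A\cap\Pi)$; and Lemma \ref{lemma:key} supplies a bijection $\theta_\Pi : E_\Pi\to\Pi$ with $\theta_\Pi(x)\in\mathcal F^{cs}(x)\cap\Pi$ preserving the normalized product measure $m^P_\Pi$. The concatenation of the $\theta_\Pi$ over $\Pi\in\mathcal R$ is the desired $\theta$; the absolute continuity estimates encoded in Definition \ref{defi:regular} make it $O(\varepsilon)$-measure preserving on its domain, and its domain has normalized $m$-measure $\geq 1-O(\varepsilon)$ in $A$.

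It remains to verify the name-coincidence inequality. Discard from the domain of $\theta$ the set of $x$ for which either $x$ or $\theta(x)$ fails to lie in $K$, or whose forward orbit fails to spend a $(1-\varepsilon)$-fraction of time in $K\setminus B_{\varepsilon'}(\partial\alpha)$; the absolute continuity of $\mathcal F^{cs}$ transfers the Birkhoff statement from $m$ to the $m^{cs}_f$-conditionals, so this discard has total measure $O(\varepsilon)$. For $x$ in the remaining good set, the pair $(x,\theta(x))$ lies in the same center-stable leaf within a $1/2$-ball and both points belong to $K$; Lemma \ref{lemma: distancia} then gives $d(f^i(x),f^i(\theta(x)))<\varepsilon'$ for every $i\geq n_0$ with $f^i(x),f^i(\theta(x))\in K$, so $l_i(x)=m_i(\theta(x))$ unless $f^i(x)\in B_{\varepsilon'}(\partial\alpha)$. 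The bad-index frequency is $O(\varepsilon)$, and Lemma \ref{lema: bernoulli} delivers $\bar d(\{f^{-i}\alpha\}_1^n,\{f^{-i}\alpha|A\}_1^n)\leq 16\cdot O(\varepsilon)$, which establishes the VWB property. The principal obstacle is synchronizing the Birkhoff recurrence of $x$ and $\theta(x)$: since $\theta$ is built from an abstract measurable isomorphism along $\mathcal F^{cs}$ that is not $f$-equivariant, we must use the absolute continuity of $\mathcal F^{cs}$ to certify that $\theta(x)$ inherits the recurrence properties of $x$ for $m$-a.e.\ $x$, and the essential injectivity dichotomy of Theorem \ref{theo:main.B} to keep $\theta(x)\in X$ so that Lemma \ref{lemma: distancia} can be applied to the pair.
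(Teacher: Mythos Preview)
Your architecture matches the paper's: build $\theta$ rectangle-by-rectangle via Lemma~\ref{lemma:key}, then control name-agreement through Lemma~\ref{lemma: distancia} and Birkhoff recurrence. However, there is a genuine gap where you claim $\theta$ is $O(\varepsilon)$-measure preserving. You attribute this solely to ``the absolute continuity estimates encoded in Definition~\ref{defi:regular}'', but that is not sufficient. On each rectangle $\Pi$, Lemma~\ref{lemma:key} gives $\theta_\Pi:E_\Pi\to\Pi$ preserving the \emph{normalized} product measure, and the $\varepsilon$-regular covering compares $m^P_\Pi$ to $m$; concatenating, you obtain $m(\theta(F))\approx\frac{m(\Pi)}{m(E_\Pi)}\,m(F)$ for $F\subset E_\Pi$. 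For $\theta:(A,m(\cdot/A))\to(\mathbb T^3,m)$ to be $\varepsilon$-measure preserving you need $\frac{m(\Pi)}{m(E_\Pi)}\approx\frac{1}{m(A)}$, i.e.\ $m(A\cap\Pi)\approx m(A)\,m(\Pi)$ for each rectangle. This equidistribution of the atom $A$ among the rectangles of $\mathcal R$ is precisely where the Kolmogorov hypothesis enters, via Lemma~\ref{lema:Kolmogorov}: one first fixes the covering $\pi=\{R_0,R_1,\dots,R_k\}$, then uses the $K$-property to choose $N$ so that for $\delta$-almost every $A\in\bigvee_{N}^{N'}f^i\alpha$ and every $R\in\pi$ one has $\left|\frac{m(R/A)}{m(R)}-1\right|<\delta$. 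Your proof never invokes the Kolmogorov assumption; without this step nothing links the proportion of $A$ inside a given rectangle to that rectangle's own measure, and the global $\theta$ need not be $\varepsilon$-measure preserving at all.

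A secondary point: your appeal to Lemma~\ref{lemma: u-complete} for ``$X$ intersects $\Pi$ leafwise'' only treats the case $X=\mathbb T^3\setminus\mathcal C$, and in any event is not how the argument is organized. The leafwise intersection one actually needs is that of the \emph{atom} $A$ with each rectangle (Lemma~\ref{lemma: leafwise}); the set $X$ enters only later, when selecting the compact $K\subset X$ on which Lemma~\ref{lemma: distancia} is applied to pairs $(x,\theta(x))$.
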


The proof of Lemma \ref{lema:final} closely follows from the arguments already used by Pesin \cite{YP3} and Chernov-Haskell \cite{CH} with the technical difference that, by Lemma \ref{lemma:key}, the function $\theta$, constructed from a tubular intersection to a rectangle containing this intersection, does not preserve stable manifolds as in \cite{OW}, \cite{YP3} and \cite{CH}. Instead, $\theta$ preserves center-stable manifolds. Points belonging to the same center-stable manifold do not have the property of getting exponentially close to each other as we iterate the dynamics, thus we cannot directly say that given $\varepsilon >0$ a large set of pairs of points on the same center-stable manifold will asymptotically visit the same atoms.\\

To overcome this difficulty we use Theorem \ref{theo:main.B}. Theorem \ref{theo:main.B} says that either $m(\mathcal C)=0$ and then we have defined $X = \mathbb T^3 \setminus \mathcal C$, or $m(\mathcal C)=1$ and then we can take a full measure set $X \subset \mathbb T^3$ intersecting almost every center-leaf in exactly one point.
By Lemma \ref{lemma: distancia}, we can choose arbitrarily large compact sets $K$ such that any pair of point $x,y \in K \subset X$ have the property that if $y \in \mathcal F^{cs}(x)$ then the distance between $f^n(x)$ and  $f^n(y)$ is very small every time both of them visits the set $K$ simultaneously. The point is that since we can take $K$ arbitrarily large and $f$ is ergodic, the set of natural numbers $\{n : f^n(x), f^n(y) \in K\}$ has arbitrarily large density, this will allow us to conclude that indeed for a large set of points $x \in X$ the Cesaro-mean of Lemma \ref{lema: bernoulli} are indeed arbitrarily small.

\subsection{Proof of Lemma $4.12$}

As explained on the last paragraph of last section, the technical difference of our case is that we have to restrict ourselves to large compact sets where points on the same center-stable leaves behave well. Therefore, we will keep notations similar to the notation used by Chernov-Haskell so that we can omit some calculations, which are equal to the (nonuniformly) hyperbolic case, and refer the reader to \cite{CH} for the detailed estimates.\\

 Let $\alpha = \{A_1,...,A_b\}$ be a finite partition of $M$ where each element has piecewise-smooth boundaries. Thus, we can take a constant $D_0$ such that for any $\varepsilon>0$ and any $i=1,...,b$, the $\varepsilon$-neighborhood of $A_i$, denoted by $O_{\varepsilon}(A_i)$, has measure less than $D_0\cdot \varepsilon$.\\
 
 Let $\varepsilon >0$ and, as in \cite{CH}, we take $\delta = \varepsilon^4$. Let $ \{R_1,...,R_k\}$ be a $\delta$-regular covering of $M$ and define the partition $\pi=\{R_0,R_1,...,R_k\}$ of $M$ by taking
 \[R_0:= M \setminus \bigcup_{i=1}^{k}R_i.\]
 By definition of $\delta$-regular covering we have $m(R_0) < \delta$ and, for each $1\leq i \leq k$, we can take a set $G_i \subset R_i$ which satisfies condition $(2)$ of Definition \ref{defi:regular}.  Since $f$ has the $K$-property there exists a natural number $N$ such that for all $N_1>N_0\geq N$ and for $\delta$-almost every atom $A \in \bigvee_{N_0}^{N_1}f^i\alpha$ has the property that for all $R \in \pi$
 \[\left|\frac{m(R\cap A)}{m(R)m(A)}-1 \right|< \delta,\]
 that is,
  \begin{equation}\label{17}
  \left|\frac{m(R/ A)}{m(R)}-1 \right|< \delta,\end{equation}
 where $m(\cdot / A)$ denotes the measure $m$ conditioned on $A$.

Now, fix natural numbers $N_1>N_0>N$ and $n>0$. We want to prove that for a certain constant $D>0$, $D\cdot \varepsilon$-almost every atom of $\bigvee_{N_0}^{N_1}f^i\alpha$ satisfies
\[\bar{d}(\{f^{-i}\alpha \}_{1}^n  , \{f^{-i}\alpha | A \}_1^n  ) \leq D\cdot \varepsilon.\]

As in \cite{CH}, the first step is to identify the set of ``bad'' elements which will have measure at most $D \cdot \varepsilon$.\\

\noindent{\bf First bad set $B_1$: } Let $B_1$ be the union of all atoms of $\bigvee_{N_0}^{N_1}f^i\alpha$ which do not satisfy \eqref{17}.\\

\noindent {\bf Second bad set $B_2$: } Denote by $F_2 = \bigcup_{i=1}^{k}R_i \setminus G_i$. Take $B_2$ to be the union of atoms $A \in \bigvee_{N_0}^{N_1}f^i\alpha$ such that either
\[m(F_2 / A) > \delta^{1/2}\]
or
\[\sum_{i=1}^{k}\frac{m_{R_i}^P(A\cap F_2)}{m(A)} > \delta^{1/2}. \]
It is easy to show that $m(B_2) < c_1 \cdot \delta^{1/2}$ for a certain constant $c_1>0$ (\cite{CH}, Pg.$22$).\\

\noindent {\bf Third bad set $B_3$: } By Lemma \ref{lemma: leafwise}, given a rectangle $\Pi$ and any $\beta >0$ we can find $\tilde{N}_1>0$ such that for all $\tilde{N}' \geq \tilde{N} \geq \tilde{N}_1$ and $\beta$-almost every element $A \in \bigvee_{\tilde{N}}^{\tilde{N}'}f^i \alpha$ , there exists a subset $E\subset A$, intersecting the rectangle $\Pi$ leafwise and for which
\[\frac{m(E)}{m(A)} \geq 1-\beta.\]

Thus, by taking $\beta$ small enough and $N$ big enough, the set $F_3$ of all points $x \in M \setminus R_0$ which lies in a non-leafwise intersection (with respect to the rectangle of $\pi$ containing $x$) satisfies
\[m(F_3)<\delta.\]
Let $B_3$ denotes the union of all atoms $A$ of $\bigvee_{N_0}^{N_1}f^i\alpha$ for which
\[m(F_3 / A) > \delta^{1/2}.\]
Then it follows that $m(B_3)<c_1\cdot \delta^{1/2}$ (\cite{CH}, Pg.$22$).\\

By the estimates on the measures of the bad sets we have that the union of all atoms on the complement of $B_1\cup B_2 \cup B_3$ is at least $1-c_1\varepsilon$ for a certain constant $c_1>0$.\\

Let $A$ be an atom of $\bigvee_{N_0}^{N_1}f^i\alpha$ which is in the complement of $B_1$, $B_2$ and $B_3$. By Lemma \ref{lemma:key}, for each $1\leq i \leq k$ with $A\cap R_i \ne \emptyset$, we can construct a bijective function $\theta_i: A \cap R_i \cap F^c_3 \rightarrow R_i$ satisfying
\[ \frac{m^P_{R_i}(\theta(B))}{m^P_{R_i}(R_i)} = \frac{m^P_{R_i}(B)}{m^P_{R_i}(A \cap R_i \cap F^c_3)} , \]
for every measurable set $B \subset A \cap R_i \cap F^c_3$ and
\[\theta(x) \in \mathcal F^{cs}(x),\]
for every $x\in A \cap R_i \cap F^c_3$.
If we denote $\mu^P_{R_i}:=m^P_{R_i}(\cdot / R_i)$ we can rewrite the previous equality as,
\[\mu^P_{R_i}(B / A\cap F^c_3) = \mu^P_{R_i}(\theta_i(B)),\]
for every measurable set $B \subset A \cap R_i \cap F^c_3$.

\begin{lemma}\label{lemma:E}
There exists a constant $c_2>0$ and a measurable set $E_2 \subset M$ with
\[m(E_2) < c_2\cdot \varepsilon,\]
and such that for each $1\leq i \leq k$ with $A\cap R_i \ne \emptyset$ the function $\theta_i$ satisfies
\[\left| \frac{m(B/A\cap R_i)}{m(\theta_i(B)/ R_i)} -1 \right|< c_2\cdot \varepsilon,\]
for any $B \subset A\cap R_i \cap E_2^c$. 
\end{lemma}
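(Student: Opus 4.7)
The plan is to build $E_2$ from three kinds of ``bad'' points tied to the $\delta$-regular covering and the construction of $\theta_i$, then to exploit the $\delta$-closeness between $m$ and each product measure $m^P_{R_i}$ on the good subsets $G_i\subset R_i$.

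First I set $F_2:=R_0\cup\bigcup_i(R_i\setminus G_i)$ and recall that $F_3$ is the non-leafwise set from the construction of the third bad set. Because $A$ lies outside $B_2\cup B_3$, one has
\[m(F_2/A)<\delta^{1/2},\qquad \sum_i\frac{m^P_{R_i}(A\cap F_2)}{m(A)}<\delta^{1/2},\qquad m(F_3/A)<\delta^{1/2}.\]
I then define the third piece
\[F_4:=\bigcup_{i\,:\,A\cap R_i\neq\emptyset}\theta_i^{-1}(F_2\cap R_i),\]
which collects the points $x\in A\cap R_i\cap F_3^c$ whose image $\theta_i(x)$ falls in the ``bad'' part of the rectangle, and set $E_2:=F_2\cup F_3\cup F_4$.

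To bound $m(E_2)$, I use the exact measure-preservation of $\theta_i$ with respect to $\mu^P_{R_i}$ from Lemma \ref{lemma:key} to get
\[m^P_{R_i}(\theta_i^{-1}(F_2\cap R_i))=\frac{m^P_{R_i}(A\cap R_i\cap F_3^c)}{m^P_{R_i}(R_i)}\,m^P_{R_i}(F_2\cap R_i)\le m^P_{R_i}(A\cap F_2).\]
Summing over $i$ and invoking the bound on $\sum_i m^P_{R_i}(A\cap F_2)/m(A)$ controls $F_4$ in the product measure; the passage to $m$-measure introduces only a multiplicative factor $(1+\delta)$ because on each $G_i$ the Radon--Nikodym derivative $dm^P_{R_i}/dm$ is $\delta$-close to $1$ by Definition \ref{defi:regular}, and the portion of $F_4$ outside $\bigcup_i G_i$ is already contained in $F_2$. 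All three pieces are thus of size $O(\delta^{1/2})=O(\varepsilon^2)$, giving $m(E_2)<c_2\varepsilon$.

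For the density estimate, take $B\subset A\cap R_i\cap E_2^c$; then $B\subset G_i$ and $\theta_i(B)\subset G_i$, so $m(B)=(1\pm\delta)\,m^P_{R_i}(B)$ and $m(\theta_i(B))=(1\pm\delta)\,m^P_{R_i}(\theta_i(B))$. The identity
\[\frac{m^P_{R_i}(B)}{m^P_{R_i}(A\cap R_i\cap F_3^c)}=\frac{m^P_{R_i}(\theta_i(B))}{m^P_{R_i}(R_i)}\]
from Lemma \ref{lemma:key}, together with $m(R_i)=(1\pm\delta)\,m^P_{R_i}(R_i)$ and $m(A\cap R_i)=(1\pm c\delta^{1/2})\,m^P_{R_i}(A\cap R_i\cap F_3^c)$, yields the desired bound. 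The main obstacle is precisely the last comparison $m(A\cap R_i)\approx m^P_{R_i}(A\cap R_i\cap F_3^c)$: $A$ cuts through $R_i$ in a measurably complicated way, so one cannot directly apply the pointwise density bound on $G_i$. This is where the two defining conditions of $B_2$ come into play, combined with \eqref{17}, to show that once the $F_2\cup F_3$-portion of $A\cap R_i$ is removed, both measures of the remainder agree up to a $\delta^{1/2}$-error. The calculation is bookkeeping and proceeds as in \cite{CH}, since the partially hyperbolic structure enters only through the fact that $\theta_i$ preserves center-stable rather than stable holonomies---a difference that does not affect this measure-theoretic step.
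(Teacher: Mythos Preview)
The paper does not give its own proof of this lemma; it simply writes ``For a detailed proof of this Lemma see \cite{CH} Pg.~24 and 25.'' Your sketch is therefore precisely an attempt to fill in the Chernov--Haskell argument the paper defers to, and the overall architecture you describe---taking $E_2=F_2\cup F_3\cup F_4$ with $F_4=\bigcup_i\theta_i^{-1}(F_2\cap R_i)$, using the Radon--Nikodym control on $G_i$ together with the exact $\mu^P_{R_i}$-preservation from Lemma~\ref{lemma:key}, and isolating the comparison $m(A\cap R_i)\approx m^P_{R_i}(A\cap R_i\cap F_3^c)$ as the one nontrivial bookkeeping step---is the correct one and matches \cite{CH}.

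One displayed inequality is wrong as written. You assert
\[
\frac{m^P_{R_i}(A\cap R_i\cap F_3^c)}{m^P_{R_i}(R_i)}\,m^P_{R_i}(F_2\cap R_i)\;\le\; m^P_{R_i}(A\cap F_2),
\]
but if $A\cap R_i\subset G_i$ the right-hand side vanishes while the left-hand side is positive. The repair is immediate and does not affect the rest of your argument: since the fraction on the left is at most~$1$, the left-hand side is bounded by $m^P_{R_i}(F_2\cap R_i)=m^P_{R_i}(R_i\setminus G_i)\le c\,\delta\,m(R_i)$ directly from the $\delta$-regular covering (Definition~\ref{defi:regular}), and summing over $i$ gives the $O(\delta)$ control of $F_4$ in product measure without invoking the second defining condition of $B_2$ at this point. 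That second condition is needed exactly where you already place it---in the comparison of $m(A\cap R_i)$ with $m^P_{R_i}(A\cap R_i\cap F_3^c)$---so your identification of the ``main obstacle'' and the role of $B_2$ is correct.
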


For a detailed proof of this Lemma see \cite{CH} Pg.$24$ and $25$.\\

Now define the function $\theta:A \rightarrow M$ as $\theta(x) = \theta_i(x)$ if $x \in A\cap R_i \cap F_3^c$ for some $1\leq i \leq k$, and $\theta(x) = x$ otherwise.

\begin{lemma}\label{lemma:ultimo}
$\theta:A\rightarrow M$ is $c_2\cdot \varepsilon$-measure preserving.
\end{lemma}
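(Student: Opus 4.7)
The plan is to take the exceptional set in $A$ to be
\[E := (A \cap R_0) \cup (A \cap F_2) \cup (A \cap F_3) \cup (A \cap E_2),\]
namely the union of the four portions of $A$ already identified as poorly behaved, and then to verify the two conditions in the definition of $\varepsilon$-measure preserving map. Bounding $m(E/A)$ is straightforward once one recalls that $A$ was chosen outside $B_1 \cup B_2 \cup B_3$: the inequality \eqref{17} applied to $R_0$ gives $m(R_0/A) < (1+\delta) m(R_0) < 2\delta$; the definitions of $B_2$ and $B_3$ yield $m(F_2/A), m(F_3/A) \leq \delta^{1/2}$; and \eqref{17} together with Lemma \ref{lemma:E} gives $m(E_2/A) \leq (1+\delta) m(E_2) < 2 c_2 \varepsilon$. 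Since $\delta = \varepsilon^4$, after enlarging $c_2$ all four contributions are absorbed into a single bound $m(E/A) \leq c_2 \varepsilon$.

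For the density comparison, given a measurable $B \subset A \setminus E$, I would decompose $B$ along the regular covering as $B = \bigsqcup_{i \in I} B_i$ with $B_i = B \cap R_i$ and $I = \{ i : A \cap R_i \ne \emptyset \}$; the piece in $R_0$ vanishes by the construction of $E$. Each $B_i$ lies in $A \cap R_i \cap F_3^c \cap E_2^c$, so $\theta$ restricted to $B_i$ coincides with the bijection $\theta_i$ from Lemma \ref{lemma:key}, and $\theta(B) = \bigsqcup_{i \in I} \theta_i(B_i)$. Applying Lemma \ref{lemma:E} to each piece and using \eqref{17} to replace $m(A \cap R_i)$ by $m(A) m(R_i)$, one obtains
\[m(\theta_i(B_i)) = (1 + \zeta_i) \cdot \frac{m(B_i)}{m(A)}, \qquad |\zeta_i| \leq 2 c_2 \varepsilon,\]
for $\varepsilon$ sufficiently small. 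Summing over $i \in I$ then gives $m(\theta(B)) = (1 + O(\varepsilon))\, m(B/A)$, which after one last relabeling of the constant is the required estimate $|m(B/A)/m(\theta(B)) - 1| \leq c_2 \varepsilon$.

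The main (admittedly mild) obstacle is careful bookkeeping among the four normalizations in play --- the volume $m$, the conditional measure $m(\cdot / A)$, the rectangular product measure $m^P_{R_i}$, and its normalization $\mu^P_{R_i} := m^P_{R_i}(\cdot / R_i)$ --- since Lemma \ref{lemma:E} compares $m$-densities to $m^P_{R_i}$-densities on the good set $G_i = R_i \setminus F_2$, whereas Lemma \ref{lemma:key} works purely with $m^P_{R_i}$. Once these identifications are lined up, no further use of the partially hyperbolic structure is needed; the argument is the direct analogue in our setting of the concluding step of Chernov--Haskell, with the center-stable foliation playing the role of the stable foliation via Lemmas \ref{lemma: distancia} and \ref{lemma:key}.
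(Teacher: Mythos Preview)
Your sketch is exactly the Chernov--Haskell argument that the paper defers to, so the approach is the same. There is one slip worth flagging: the bound $m(E_2/A)\le (1+\delta)\,m(E_2)$ does \emph{not} follow from \eqref{17}, because \eqref{17} only compares $m(R/A)$ with $m(R)$ for the finitely many rectangles $R\in\pi$, not for an arbitrary measurable subset such as $E_2$. The correct control of $m(E_2\cap A)/m(A)$ in \cite{CH} comes instead from the explicit construction of $E_2$ (it is built out of $F_2$, the $\theta_i$-preimages of $F_2$, and the complement of the tubular intersection, and each of these has conditional measure in $A$ already bounded via the $B_2$, $B_3$ exclusions and the product-measure estimates). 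Alternatively, one can simply introduce a fourth bad set $B_4$ of atoms with $m(E_2/A)>\varepsilon^{1/2}$ and discard it by Markov's inequality. With that correction the rest of your bookkeeping --- the decomposition $B=\bigsqcup_i B_i$, the use of Lemma~\ref{lemma:E} on each piece, and the passage from $m(\cdot/A\cap R_i)$ to $m(\cdot/A)$ via \eqref{17} --- is right and reproduces \cite{CH}, p.~26. (The reference to Lemma~\ref{lemma: distancia} in your last paragraph is not needed for this lemma; it enters only afterwards, in bounding the Ces\`aro sums.)
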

\begin{proof}
See \cite{CH}, Pg.$26$.
\end{proof}

To conclude the proof that $\alpha$ is VWB there is one final step. Recall that the function $\theta:A \rightarrow M$ constructed above has the property of being $c_2\cdot \varepsilon$-measure preserving and 
\[\theta(x) \in \mathcal F^{cs}(x) \cap R_i\]
for any $x\in A\cap R_i $, $1\leq i \leq k$. To use Lemma \ref{lema: bernoulli} and finish the proof we still need to prove that the Cesaro sum that appears in Lemma \ref{lema: bernoulli} is small for a large set of points $x$. Here is where we need to restrict ourselves to a large compact set where we can apply Lemma \ref{lemma: distancia} for the recurrent pairs of points in this set.\\

Take an arbitrary $\zeta<1$ and consider $K \subset E_2^c \cap X$ (see definition \ref{defi:X}) a compact set with
\[m(K) > \zeta \cdot m(E_2^c).\]

Take $\kappa$ the set of points of $K$ such that the past and future Birkhoff averages coincide and converge to $m(K)$, that is: $x \in \kappa$ if $x \in K$ and

\[\lim_{n\rightarrow -\infty}\frac{1}{|n|}\sum_{j=0}^{n-1}\chi_{K} (f^j(x))  = \lim_{n \rightarrow \infty} \frac{1}{n}\sum_{j=0}^{n-1}\chi_{K} (f^j(x)) = m(K). \]

By Birkhoff's Theorem we know that $m(\kappa) = m(K)$. Take $P:= \kappa \cap \theta^{-1}(\kappa)$. Since $\theta$ is $c_2\varepsilon$-measure preserving, taking $\zeta$ close enough to one we have that
\[m(E_2^c \setminus P) \leq 2c_2 \cdot \varepsilon.\]

Now, observe that by Lemma \ref{lemma: distancia} we can take $n_0\geq 0$ such that for any $ x\in \kappa$ we have
\[d(f^n(x),f^n(\theta(x))) < \varepsilon \]
for all $n \geq n_0$ with $f^n(x),f^n(\theta(x)) \in \kappa$. \\

Let $l_i(x)$ be the name of $x$ with respect to the sequence of partitions $\xi_i:=f^{-i}\alpha|A$ and $m_i(x)$ the name of $x$ with respect to the partitions $\eta_i = f^{-i}\alpha$.
If $x\in \kappa$, $i\geq n_0$ and $e(l_i(x) - m_i(\theta(x))) = 1$ then either:\\
\begin{itemize}
\item $f^i(x) \notin \kappa$ or $f^i(\theta(x))\notin \kappa$; or
\item $d(f^i(x),f^i(\theta(x))) < \varepsilon$ and then,
\[ d(f^i(x), \partial A_{l_i(x)}) < \varepsilon \Rightarrow f^i(x) \in O_{\varepsilon}(A_{l_i(x)}).\]
\end{itemize}
\quad\\
Take \[O_{\varepsilon} := \bigcup_{i=1}^{k}O_{\varepsilon}(A_i)\] and consider 
\[J^x= \{ j  \in \mathbb N \text{ such that } f^j(x) \notin \kappa \text{ or } f^j(\theta(x)) \notin \kappa \},\] 
\[J^x_n := J ^x\cap [1,n].\]
By the definition of the function $e$ we have 
\begin{eqnarray*}
\frac{1}{n} \sum_{i=1}^{n} e(l_i(x)  -  m_i(\theta(x))) \leq \frac{1}{n}   \sum_{j=1}^n \chi_{O_{\varepsilon}} (f^j(x))  +   \frac{1}{n} \#J^x_n,
\end{eqnarray*}
By ergodicity the right side converges to $[m(O_{\varepsilon}) + \operatorname{dens}(J^x) ]$ for almost every $x$. Since we can take $K$ with arbitrarily large measure, and since $m(O_{\varepsilon}) < D_0\cdot \varepsilon$ it follows that, there exist a set $\hat{P} \subset P$ with measure $m(\hat{P})>1-c_3\varepsilon$ such that for all $x \in \hat{P}$ 
\[\frac{1}{n} \sum_{i=1}^{n} e(l_i(x)  -  m_i(\theta(x))) \leq c_3 \cdot \varepsilon,\]
for a certain constant $c_3>0$ and $n$ large.
Applying Lemma \ref{lema: bernoulli} we conclude that
\[\bar{d}(\{\xi_i\}_1^n,\{\eta_i\}_1^n ) \leq c_4 \varepsilon,\]
for a certain constant $c_4$ which do not depend on $\varepsilon$.
Since $\varepsilon>0$ is arbitrary it follows that $\alpha$ is $\operatorname{VWB}$ as we wanted to show. \hfill $\square$

\bibliographystyle{plain}
\bibliography{Referencias.bib}
\end{document}